\newtheorem*{rep@theorem}{\rep@title}
\newcommand{\newreptheorem}[2]{%
\newenvironment{rep#1}[1]{%
	\def\rep@title{#2~\ref{##1}}%
	\begin{rep@theorem}}%
	{\end{rep@theorem}}}
\renewcommand{\O}{\mathcal{O}}
\newcommand{\QQp}{\mathbb{Q}_p}
\newcommand{\ZZp}{\mathbb{Z}_p}
\newcommand{\ZZ}{\mathbb{Z}}
\newcommand{\FFp}{\mathbb{F}_p}
\newcommand{\FFpb}{\overline{\mathbb{F}}_p}
\newcommand{\Gal}{\mathrm{Gal}}
\newcommand{\Hom}{\mathrm{Hom}}
\newcommand{\GL}{\mathrm{GL}}
\newcommand{\ovr}[1]{\overline{#1}}
\numberwithin{equation}{section}
\theoremstyle{definition}
\newtheorem{defn}[equation]{Definition}
\newtheorem{con}[equation]{Construction}
\newtheorem{notation}[equation]{Notation}
\newtheorem*{clm}{Claim}
\theoremstyle{plain}
\newtheorem{thm}[equation]{Theorem}
\newtheorem{lem}[equation]{Lemma}
\newtheorem{prop}[equation]{Proposition}
\newtheorem{cor}[equation]{Corollary}
\theoremstyle{remark}
\newtheorem{rem}[equation]{Remark}
\title{Explicit Serre weights for $\operatorname{GL}_2$ via Kummer theory}
\author{Robin Bartlett \& Misja F.A. Steinmetz}
\date{24 June 2022}
\begin{document}
\begin{abstract}
	We give an explicit formulation of the weight part of Serre's conjecture for $\operatorname{GL}_2$ using Kummer theory. This avoids any reference to $p$-adic Hodge theory. The key inputs are a description of the reduction modulo $p$ of crystalline extensions in terms of certain ``$G_K$-Artin--Scheier cocycles'' and a result of Abrashkin which describes these cocycles in terms of Kummer theory.
	
	An alternative explicit formulation in terms of local class field theory was previously given by Demb\'el\'e--Diamond--Roberts in the unramified case and by the second author in general. We show that the description of Demb\'el\'e--Diamond--Roberts can be recovered directly from ours using the explicit reciprocity laws of Br\"uckner--Shaferevich--Vostokov. These calculations illustrate how our use of Kummer theory eliminates certain combinatorial complications appearing in these two papers. 
\end{abstract}

\maketitle
\setcounter{tocdepth}{1}
\tableofcontents

\section{Introduction}
\subsection*{Overview}
Serre conjectured in \cite{ser87} that every continuous irreducible odd representation $\overline{\rho}:G_{\mathbb{Q}} \rightarrow \operatorname{GL}_2(\overline{\mathbb{F}}_p)$ arose as the reduction modulo $p$ of the Galois representation attached to a modular form. Furthermore, Serre predicted the possible weights of the relevant modular forms in terms of the local representation $\overline{\rho}|_{G_{\mathbb{Q}_p}}$. As the following example illustrates, the recipe is extremely explicit. Suppose 
$$
\overline{\rho}|_{I_{\mathbb{Q}_p}} \sim \begin{pmatrix}
	\overline{\chi}_{\operatorname{cyc}}^k & c \\ 0 & 1
\end{pmatrix}, \qquad 0 \leq k \leq p-1
$$ with $\overline{\chi}_{\operatorname{cyc}}$ the mod~$p$ cyclotomic character and $I_{\mathbb{Q}_p} \subset G_{\mathbb{Q}_p}$ the inertia subgroup. Then Serre expected that $\overline{\rho}$ would be modular of weight $k+1$. The one exception is when $k=1$; in this case $\overline{\rho}$ is modular of weight $2$ if and only if the class of $c$ is \emph{peu ramifi\'e}, i.e. contained in the image of the Kummer map
$$
\mathbb{Z}_p^\times \otimes_{\mathbb{Z}_p} \overline{\mathbb{F}}_p \rightarrow H^1(G_{\mathbb{Q}_p},\FFpb(\overline{\chi}_{\operatorname{cyc}})).
$$
Otherwise $\overline{\rho}$ will be modular of weight $p+1$. 

Generalisations of this weight recipe have been made in \cite{bdj10,BLGG13} with $\mathbb{Q}$ replaced by a totally real field $F$. When $\overline{\rho}|_{G_{F_v}}$ is semisimple at each prime $v$ of $F$ dividing $p$ this is an immediate extension of Serre's description. However, the more general setup requires considerably more complicated constraints on the extension classes. The previously mentioned conjectures give a description of these extension classes in terms of reductions of crystalline representations. In a series of papers \cite{GK14,BLGG13,New14}, culminating in \cite{gls15}, these conjectures have essentially been proven. In particular, for $p>2$ and a totally real field $F$, the possible weights of a modular representation $\overline{\rho}\colon G_F \rightarrow \operatorname{GL}_2(\overline{\mathbb{F}}_p)$ can be described in terms of a set of ``local'' Serre weights $W^{\operatorname{cr}}(\overline{\rho}|_{G_{F_v}})$ defined in terms of Hodge--Tate weights of crystalline lifts of $\overline{\rho}|_{G_{F_v}}$ at places $v$ of $F$ diving $p$. This is explained in detail in Section~\ref{serreweights}.

This description of the weights in terms of crystalline lifts, while conceptually appealing, is neither explicit nor computable. The goal of this paper is to give an alternative description in the spirit of Serre's original conjecture, using the Kummer map. As a consequence, we obtain an explicit formulation of the weight part of Serre's conjecture which avoids any mention of $p$-adic Hodge theory.
\subsection*{Crystalline lifts and our main result}

To achieve this goal we are reduced to the purely local  problem of explicitly describing $W^{\operatorname{cr}}(\overline{r})$ for any continuous $\overline{r}:G_K \rightarrow \operatorname{GL}_2(\overline{\mathbb{F}}_p)$ with $K/\mathbb{Q}_p$ a finite extension. The results of \cite{gls15} give such an explicit description when $\overline{r}$ is semisimple and in general show that $W^{\operatorname{cr}}(\overline{r}) \subset W^{\operatorname{cr}}(\overline{r}^{\operatorname{ss}})$. See Section~\ref{semisimplesection}.

To state our main result let $f$ denote the residue degree of $K$ over $\mathbb{Q}_p$  and fix a uniformiser $\pi \in K$, as well as a $(p^f-1)$-th root $\pi^{1/(p^f-1)}$ in an algebraic closure. Set $L$ equal the $(p^f-1)$-th unramified extension of $K(\pi^{1/(p^f-1)})$. Then $L$ contains the primitive $p$-th roots of unity. If $l$ denotes the residue field of $L$ the Artin--Hasse exponential defines an isomorphism of $\mathbb{Z}_p$-modules
$$
vW(l)[[v]] \xrightarrow{\sim} 1+vW(l)[[v]]
$$
sending $f \mapsto \sum_{n \geq 0} \left( \frac{\varphi^n(f)}{p^n} \right)$ for $\varphi$ the $\mathbb{Z}_p$-linear endomorphism of $W(l)[[v]]$ given by $v \mapsto v^p$ and the lift of Frobenius on $W(l)$. Composing with evaluation at $v = \pi^{1/(p^f-1)}$ produces a homomorphism $vW(l)[[v]] \rightarrow 1+ \mathfrak{m}_L$ and applying $\otimes_{\mathbb{F}_p} \overline{\mathbb{F}}_p$ gives a homomorphism
$$
vl[[v]] \otimes_{\mathbb{F}_p} \overline{\mathbb{F}}_p \rightarrow L^\times \otimes_{\mathbb{Z}} \overline{\mathbb{F}}_p = H^1(G_L,\overline{\mathbb{F}}_p)
$$
with the last identification coming via Kummer theory from a fixed choice of primitive $p$-th root of unity in $L$. We extend this to a surjective homomorphism
$$
\Psi_0: l[[v]] \otimes_{\mathbb{F}_p} \overline{\mathbb{F}}_p \rightarrow L^\times \otimes_{\mathbb{Z}} \overline{\mathbb{F}}_p = H^1(G_L,\overline{\mathbb{F}}_p)
$$
by choosing any homomorphism $\psi:l \rightarrow \mathbb{Z}/p\mathbb{Z}$ and mapping $x\in l$ onto $\pi^{\psi(x)/(p^f-1)}$.
See Section~\ref{AH} for more details on these constructions. 

Then our explicit version of the weight part of Serre's conjecture is as follows.
\begin{thm}\label{intothm}
	Suppose $p>2$ and $\overline{r}:G_K \rightarrow \operatorname{GL}_2(\overline{\mathbb{F}}_p)$ is continuous with $\overline{r} = \left( \begin{smallmatrix}
		\chi_1 & c \\ 0 & \chi_2
	\end{smallmatrix}\right)$. Then there exists an explicit $\overline{\mathbb{F}}_p$-subspace of $l[[v]] \otimes_{\mathbb{F}_p} \overline{\mathbb{F}}_p$ depending on $\sigma$ and $\overline{r}^{\operatorname{ss}}|_{I_K}$ only, whose image under $\Psi_0$ we will denote by $\Psi_{\sigma}(\chi_1,\chi_2)$, such that $\sigma  \in W^{\operatorname{cr}}(\overline{r})$ if and only
	\begin{enumerate}
		\item $\sigma \in W^{\operatorname{cr}}(\overline{r}^{\operatorname{ss}})$ and
		\item $c|_{G_L} \in \Psi_{\sigma}(\chi_1,\chi_2)$.
	\end{enumerate}
\end{thm}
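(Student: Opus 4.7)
The plan is to reduce the theorem to a purely local statement about crystalline lifts of reducible extensions, and then tackle that statement in two stages: first describe the reductions mod $p$ of crystalline extensions in terms of Artin--Schreier cocycles for $G_K$, and then translate these cocycles into Kummer-theoretic data via the result of Abrashkin advertised in the abstract.

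For the reduction, observe that the inclusion $W^{\operatorname{cr}}(\overline{r}) \subseteq W^{\operatorname{cr}}(\overline{r}^{\operatorname{ss}})$ recalled in Section~\ref{semisimplesection} makes condition~(1) automatic for any $\sigma \in W^{\operatorname{cr}}(\overline{r})$. Assuming (1), one fixes crystalline characters $\widetilde{\chi}_1, \widetilde{\chi}_2$ lifting $\chi_1, \chi_2$ whose Hodge--Tate weights are compatible with $\sigma$. The theorem then reduces to the following question: for which classes $c \in H^1(G_K, \overline{\mathbb{F}}_p(\chi_1 \chi_2^{-1}))$ is $c$ the reduction of some class in $H^1(G_K, \mathcal{O}_E(\widetilde{\chi}_1 \widetilde{\chi}_2^{-1}))$ whose associated representation is crystalline of the prescribed Hodge type?

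The second stage parametrises these crystalline extension classes using integral $p$-adic Hodge theory, most naturally via Breuil--Kisin modules. A crystalline extension of $\widetilde{\chi}_2$ by $\widetilde{\chi}_1$ corresponds to an extension of the associated rank-one Breuil--Kisin modules, which up to equivalence is specified by a single power series in $W(l)[[v]]$ whose admissible shape is dictated by the Hodge type $\sigma$. Reducing mod $p$ should yield an explicit $\overline{\mathbb{F}}_p$-subspace $V_\sigma \subseteq l[[v]] \otimes_{\mathbb{F}_p} \overline{\mathbb{F}}_p$ depending only on $\sigma$ and $\overline{r}^{\operatorname{ss}}|_{I_K}$, and a cocycle computation identifies the extension class associated to $f \in V_\sigma$ with a $G_K$-Artin--Schreier cocycle in the sense introduced earlier in the paper.

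The final stage invokes Abrashkin's theorem to identify $\Psi_0(V_\sigma) = \Psi_\sigma(\chi_1, \chi_2) \subseteq H^1(G_L, \overline{\mathbb{F}}_p)$ with the restrictions $c|_{G_L}$ of the extension classes produced above: the Artin--Hasse exponential composed with evaluation at $v = \pi^{1/(p^f-1)}$ sends each Artin--Schreier cocycle to the corresponding Kummer class, which is precisely the recipe defining $\Psi_0$. Restricting to $G_L$ is essential because Kummer theory requires $L$ to contain the primitive $p$-th roots of unity. Combining all three stages yields the claimed equivalence. The main obstacle I anticipate is the second stage, in particular the explicit identification of $V_\sigma$, which requires a careful combinatorial analysis of the Breuil--Kisin conditions tied to the Hodge type $\sigma$, with special care in boundary cases such as when the Hodge--Tate weights lie at the edge of the crystalline range or when $\chi_1$ and $\chi_2$ coincide.
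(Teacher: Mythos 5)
Your overall toolkit (Breuil--Kisin modules, Artin--Schreier cocycles, Abrashkin's dictionary) matches the paper's, but there is a genuine gap in your opening reduction, and it affects precisely the hard direction of the theorem. You claim that, granting condition~(1), the statement ``reduces to'' asking which classes $c$ lift to crystalline extension classes in $H^1(G_K,\mathcal{O}_E(\widetilde{\chi}_1\widetilde{\chi}_2^{-1}))$ for fixed crystalline lifts $\widetilde{\chi}_i$ of $\chi_i$. That reduction only proves the ``if'' direction: a crystalline extension of $\widetilde{\chi}_2$ by $\widetilde{\chi}_1$ of the right Hodge type certainly witnesses $\sigma\in W^{\operatorname{cr}}(\overline{r})$. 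For the ``only if'' direction, the witnessing lift $r$ is an \emph{arbitrary} crystalline representation with $r\otimes\overline{\mathbb{F}}_p\cong\overline{r}$; it need not be reducible at all, and even when it is reducible its diagonal characters need not be the $\widetilde{\chi}_i$ you fixed (several pairs $(J,x)$ of weight distributions can realise the same $\overline{r}^{\operatorname{ss}}|_{I_K}$). This is why the paper's argument instead analyses the mod~$p$ Breuil--Kisin module of the full two-dimensional lift via the shape theorem of Gee--Liu--Savitt (Proposition~\ref{gls}), deduces that $c|_{G_L}$ lands in an explicit subspace $\Psi_{\sigma,J,x}$ for \emph{some} admissible pair $(J,x)$, and only then closes the loop with the extension-of-characters construction: one shows the image of the Bloch--Kato crystalline classes has dimension $\geq \dim\Psi_\sigma(\chi_1,\chi_2)^{\operatorname{Gal}(L/K)=\chi^{-1}}$ (via Nekov\'a\v{r}'s formula and the analysis of the maximal pair $(J_{\operatorname{max}},x_{\operatorname{max}})$ in Proposition~\ref{proposition-maximal}), forcing equality. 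Your proposal contains neither the argument that handles arbitrary lifts nor the dimension count, so the equivalence is not established.

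A second, smaller gap: a Breuil--Kisin module a priori only records the $G_{K_\infty}$-action on $\overline{r}^\vee$, and restriction $H^1(G_K,\overline{\mathbb{F}}_p(\chi))\to H^1(G_{K_\infty},\overline{\mathbb{F}}_p(\chi))$ is not injective (e.g.\ for $\chi$ trivial or cyclotomic), so ``a cocycle computation identifies the extension class with a $G_K$-Artin--Schreier cocycle'' hides real content. The paper devotes Theorem~\ref{galois} to this: one must show the $\varphi$-equivariant $G_K$-action satisfying the divisibility (D1) is \emph{unique} and is computed by the limit $\lim_n\varphi^n\circ\sigma_{\operatorname{naive},\beta}\circ\varphi^{-n}$, with a separate treatment of the degenerate shape $(s_\tau,t_\tau)=(0,e+p-1)$ and of the excluded case where $\chi$ is an unramified twist of $\chi_{\operatorname{cyc}}$ with all $a_\tau=p-1$. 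You would need to supply this step before Abrashkin's formula can be applied to $c$ itself rather than to $c|_{G_{K_\infty}}$.
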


This result is Theorem~\ref{main} and allows us to view the subspace $\Psi_{\sigma}(\chi_1,\chi_2)$ as extending the notion of \emph{peu ramifi\'e} classes in Serre's original conjecture.

Since the map $\Psi_0$ has a large kernel, there are many possible descriptions of the subspace defining $\Psi_{\sigma}(\chi_1,\chi_2)$. For example, the results from Section~\ref{sec-refine} describe a constant $C_{\sigma} \in l[[v]] \otimes_{\mathbb{F}_p} \overline{\mathbb{F}}_p$ so that $\Psi_{\sigma}(\chi_1,\chi_2) = \Psi_0(C_{\sigma}l[[u]] \otimes_{\mathbb{F}_p} \overline{\mathbb{F}}_p)$ for $u =v^{p^f-1}$. In fact, we have the following even more explicit description. 

\begin{thm}\label{thm2}
	Assume that $\chi_1/\chi_2$ is not equal the trivial character or an unramified twist of the cyclotomic character. Let $e$ denote the ramification degree of $K/\mathbb{Q}_p$. For each $\tau \in \operatorname{Hom}_{\mathbb{F}_p}(k,\overline{\mathbb{F}}_p)$ and $n \in [0,e-1]$, there exist elements $u_{\tau,n} \in l[[v]] \otimes_{\mathbb{F}_p} \overline{\mathbb{F}}_p$ depending on $\sigma$ and $\overline{r}^{\operatorname{ss}}|_{I_K}$. Then there exists a unique pair $(J,x)$ with $J \subset \operatorname{Hom}_{\mathbb{F}_p}(k,\overline{\mathbb{F}}_p)$ and $x = (x_\tau)_{\tau:k \rightarrow \overline{\mathbb{F}}_p}$ with $x_\tau \in [0,e-1]$ such that
	$$
	\Psi_{\sigma}(\chi_1,\chi_2) = \operatorname{span}_{\overline{\mathbb{F}}_p}\left\{ \Psi_0(u_{\tau,n}) \mid n \leq \begin{cases}
		x_\tau + 1 & \text{if $\tau \circ \varphi^{-1} \in J$;} \\
		x_\tau & \text{if $\tau \circ \varphi^{-1} \not\in J$.} 
	\end{cases}\right\}
	$$
	In fact, these $\Psi_0(u_{\tau,n})$ form a basis of $\Psi_{\sigma}(\chi_1,\chi_2)$.
\end{thm}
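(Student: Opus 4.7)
The strategy is to take the description $\Psi_\sigma(\chi_1,\chi_2) = \Psi_0(C_\sigma \cdot l[[u]] \otimes_{\FFp} \FFpb)$ from Section~\ref{sec-refine} (with $u = v^{p^f-1}$) and refine it by decomposing along the embeddings $\tau: k \to \FFpb$. Using $l \otimes_{\FFp} \FFpb \cong \prod_{\tilde\tau: l \to \FFpb} \FFpb$ and grouping $\tilde\tau$ by their restriction to $k$, one obtains a decomposition of $l[[v]] \otimes_{\FFp} \FFpb$ into pieces indexed by $\Hom_{\FFp}(k,\FFpb)$. Writing $C_\sigma$ componentwise, the explicit formula provided in Section~\ref{sec-refine} shows that each non-zero component has the shape $v^{a_\tau} \cdot (\text{unit series})$ for some integer $a_\tau \in \ZZ_{\geq 0}$ depending only on $\sigma$ and $\overline{r}^{\operatorname{ss}}|_{I_K}$.

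Because $l[[u]]$ only shifts $v$-valuations in steps of $p^f - 1$, the module $C_\sigma \cdot l[[u]] \otimes \FFpb$ is spanned in each $\tau$-component by natural candidates $u_{\tau,n}$ of $v$-valuation $a_\tau + n(p^f-1)$ for $n \in [0,e-1]$. To pass to the image under $\Psi_0$, I would analyse the kernel of $\Psi_0$ using two sources of relations: the Kummer relation induced by $(\pi^{1/(p^f-1)})^{p^f-1} = \pi$, and the Frobenius twist built into the Artin--Hasse exponential, which intertwines multiplication by $v$ with the action of $\varphi$ on $\tau$-components. Together these equip $\Psi_\sigma(\chi_1,\chi_2)$ with a filtration by $v$-valuation whose graded pieces are at most one-dimensional. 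The largest index $n \in [0,e-1]$ for which the $\tau$-piece survives defines $x_\tau$, and an additional basis element in the $\tau$-component (pushing the cutoff up to $x_\tau + 1$) appears precisely when the Frobenius-shifted index $\tau \circ \varphi^{-1}$ lies in the set $J$ determined by these relations.

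Finally, linear independence of the selected $\Psi_0(u_{\tau,n})$ follows because they live in distinct graded pieces of this filtration, and spanning follows from a dimension count against $\dim_{\FFpb} \Psi_\sigma(\chi_1,\chi_2)$ as computed in Section~\ref{sec-refine}; the pair $(J,x)$ is then forced, giving uniqueness. I expect the main obstacle to be the second step: correctly tracking how the Artin--Hasse Frobenius twist redistributes contributions between $\tau$-components and pinpointing in which component the ``extra'' basis vector appears. The hypothesis that $\chi_1/\chi_2$ is neither trivial nor an unramified twist of the cyclotomic character is precisely what excludes the degenerate regime in which $C_\sigma$ or the resulting filtration collapses and $(J,x)$ would fail to be well-defined.
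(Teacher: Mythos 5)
Your decomposition and spanning argument is broadly in the spirit of Proposition~\ref{maximalprop}, but there is a genuine gap at the linear-independence step that the filtration argument you sketch does not fill, and your dimension count is misattributed to the wrong part of the paper.

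First, a clarification about the kernel analysis: the relations inside $\ker\Psi$ that matter are the Artin--Schreier relations from Corollary~\ref{Abrcor}(3), namely $\Psi(\varphi(H)-H)=0$, not anything naturally described as a ``Kummer relation induced by $(\pi^{1/(p^f-1)})^{p^f-1}=\pi$.'' Because $\varphi$ multiplies $v$-valuations by $p$, these relations connect elements at widely separated valuations, so there is no reason the proposed filtration by $v$-valuation descends to a filtration of $\Psi_\sigma(\chi_1,\chi_2)$ with one-dimensional graded pieces. The paper's Claim in the proof of Proposition~\ref{maximalprop} reduces an arbitrary element modulo these relations by a $\varphi$-recursion that can terminate in three qualitatively different ways (one of which needs the $\mu_\psi\neq 1$ dichotomy), and gives only a \emph{spanning} set, not linear independence.

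The more serious issue is the dimension count. Corollary~\ref{cor-maxpairs} in Section~\ref{sec-refine} proves only the upper bound $\dim_{\FFpb}\Psi_\sigma(\chi_1,\chi_2)^{\operatorname{Gal}(L/K)=\chi^{-1}} \leq \operatorname{Card}(J_{\operatorname{max}}) + \sum_\tau x_{\operatorname{max},\tau}$. The matching lower bound is Corollary~\ref{dimPsiequals}, which lives in Section~\ref{finishproof}, \emph{after} the proof of Theorem~\ref{main}: it is obtained by constructing a crystalline lift $r = \left(\begin{smallmatrix}\widetilde\chi_1 & C\\0&\widetilde\chi_2\end{smallmatrix}\right)$, computing $\dim H^1_f(G_K,\overline{\mathbb{Z}}_p(\widetilde\chi_1\widetilde\chi_2^{-1}))$ via Nekov\'a\v{r}'s formula, and combining this with Corollary~\ref{ASfinal} to show the reduction of the Bloch--Kato subspace lands inside $\Psi_\sigma(\chi_1,\chi_2)^{\operatorname{Gal}(L/K)=\chi^{-1}}$. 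There is no purely Kummer-theoretic source for this lower bound in Section~\ref{sec-refine}. So ``spanning follows from a dimension count against $\dim\Psi_\sigma(\chi_1,\chi_2)$ as computed in Section~\ref{sec-refine}'' is not available; either you establish spanning directly (as the paper does, getting independence from the dimension equality) or you establish independence directly (which would require a careful new argument, since the filtration argument as stated is not valid). As written your proposal proves neither. A related small inaccuracy: uniqueness of $(J,x)$ in the paper comes from the partial order and maximal-pair analysis in Proposition~\ref{proposition-maximal} (translating \cite[5.3.3]{gls15}), not from a dimension count.
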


The assumptions made on $\chi_1/\chi_2$ here are only for simplicity; for the complete statement we refer to Proposition~\ref{maximalprop} and \ref{cor-degen}. We emphasise that, even though the above theorems assert only that various subspaces or elements exist, in the body of the paper we give explicit formulae for all these objects.

\subsection*{Relation to Demb\'el\'e--Diamond--Roberts} The idea of giving a completely explicit formulation of the weight part of Serre's conjecture was first addressed in \cite{ddr16}. They assume $K/\mathbb{Q}_p$ is unramified and, in this setting, formulated a conjectural description of $W^{\operatorname{cr}}(\overline{r})$ using local class field theory to describe subspaces of $H^1(G_K,\overline{\mathbb{F}}_p(\chi_1/\chi_2))$. These predictions were subsequently proven in \cite{cegm17}. In \cite{ste22} the second author showed that when $K/\mathbb{Q}_p$ ramifies it is still possible to give an explicit description along the lines of \cite{ddr16} and prove the equivalence of this description to $W^{\operatorname{cr}}(\overline{r})$.

In each of \cite{ddr16,ste22} the relevant subspaces of $H^1(G_K,\overline{\mathbb{F}}_p(\chi_1/\chi_2))$ are described by first exhibiting a basis of $H^1(G_K,\overline{\mathbb{F}}_p(\chi_1/\chi_2))$ and then defining the subspaces as the span of certain elements of this basis. One issue with this approach is that, in certain boundary situations, deciding which basis elements should be included in the subspace requires a combinatorial recipe which is much more complicated than that in Theorem~\ref{thm2}. Even in the unramified case this recipe (see Section~\ref{sec-DDR}) is rather involved. In the presence of ramification finding a simpler and more direct description for which basis elements are to be included becomes a difficult combinatorial problem which is unlikely to have a straightforward general solution (see, for example, \cite[Ch.~7]{ste20} where simpler descriptions are given under several simplifying assumptions). One of the main motivations for this paper was to circumvent these complications.

On the other hand, one can wonder whether Theorem~\ref{intothm} could be used to recover the results of \cite{ddr16} in the unramified case. We do this in the last part of the paper, using the explicit reciprocity law of Br\"uckner--Shafervich--Vostokov to pass between our Kummer theoretic description and that given in terms of local class field theory.

\begin{prop}\label{prop:intro-comparison}
	Assume $K/\mathbb{Q}_p$ is unramified and let $L^{\operatorname{DDR}}_{\sigma}(\chi_1,\chi_2) \subset H^1(G_K,\FFpb(\chi_1/\chi_2))$ denote the subspace defined in \cite{ddr16} (see Section~\ref{sec-DDR} for more details). Then $$
	L^{\operatorname{DDR}}_{\sigma}(\chi_1,\chi_2) = \Psi_{\sigma}(\chi_1,\chi_2)^{\operatorname{Gal}(L/K) = \chi_2/\chi_1}$$
	under the identification $H^1(G_K,\FFpb(\chi_1/\chi_2)) = H^1(G_L,\FFpb)^{\operatorname{Gal}(L/K) = \chi_2/\chi_1}$.
\end{prop}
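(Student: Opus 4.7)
The plan is to prove this comparison by making both descriptions as explicit as possible on the level of bases and then using the Brückner--Shafarevich--Vostokov (BSV) explicit reciprocity law to match them directly.

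First I would unpack the right-hand side. Since $L/K$ is Galois with $\Gal(L/K)$ of order prime to $p$, inflation--restriction (and the $\Gal(L/K)$-eigenspace decomposition of $H^1(G_L,\FFpb)$) yields the isomorphism
$$H^1(G_K,\FFpb(\chi_1/\chi_2))\;\xrightarrow{\sim}\; H^1(G_L,\FFpb)^{\Gal(L/K)=\chi_2/\chi_1}\;=\;(L^\times\otimes_\ZZ\FFpb)^{\Gal(L/K)=\chi_2/\chi_1},$$
the last equality by Kummer theory. Thus both sides of the claimed equality live naturally inside $L^\times\otimes_\ZZ\FFpb$. Taking $\Gal(L/K)$-invariants of the eigencharacter on Theorem~\ref{thm2}'s description of $\Psi_\sigma(\chi_1,\chi_2)$ then gives us a concrete finite spanning set, which is the image under $\Psi_0$ of those Artin--Hasse-type elements $u_{\tau,n}$ whose $\Gal(L/K)$-character equals $\chi_2/\chi_1$ after symmetrising.

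Next I would recall from Section~\ref{sec-DDR} how $L^{\operatorname{DDR}}_\sigma(\chi_1,\chi_2)$ is defined: one picks a basis of $H^1(G_K,\FFpb(\chi_1/\chi_2))$ dual, via local reciprocity, to a natural basis of $K^\times\otimes\FFpb$ (or appropriate unit-filtration quotients), and then $L^{\operatorname{DDR}}_\sigma(\chi_1,\chi_2)$ is defined as the span of a specified subset of this dual basis. The key input is now the BSV explicit reciprocity law, which computes Hilbert pairings $(\alpha,\beta)_L$ for $\alpha,\beta\in L^\times$ by an explicit formula involving Artin--Hasse logarithms and the $\varphi$-operator on $W(l)[[v]]$. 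Under our identification $H^1(G_L,\FFpb)=L^\times\otimes\FFpb$ this pairing corresponds exactly to the cup product with the mod-$p$ cyclotomic character, so it pairs perfectly with the reciprocity basis used in \cite{ddr16}. Thus BSV gives, on the nose, the explicit transition matrix between the Kummer basis $\{\Psi_0(u_{\tau,n})\}$ and the CFT basis used to define $L^{\operatorname{DDR}}_\sigma$.

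With the transition matrix in hand, the proof reduces to verifying that the span-conditions on the two sides coincide after applying this matrix. Concretely, one chases each $u_{\tau,n}$ (or rather its $(\chi_2/\chi_1)$-invariant part) through the BSV pairing and reads off its expansion in the DDR basis; conversely, for each DDR basis element included in $L^{\operatorname{DDR}}_\sigma$ one checks it lies in the $\Psi_0$-image of our span. The main obstacle is precisely the combinatorial matching in the boundary cases flagged in the introduction: the DDR recipe uses a rather involved case analysis to decide which basis vectors to include, whereas our description via the pair $(J,x)$ from Theorem~\ref{thm2} is uniform. Thus the real content is to see that BSV sends the uniform conditions ``$n\le x_\tau$ or $n\le x_\tau+1$ if $\tau\circ\varphi^{-1}\in J$'' to exactly the DDR inclusion rules, case by case. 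Once this bookkeeping is carried out (which, since $e=1$, reduces to tracking a single embedding-indexed tuple rather than a doubly-indexed one), the equality of subspaces follows, and hence so does the Proposition.
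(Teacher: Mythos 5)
Your overall framing --- identify both subspaces inside $L^\times\otimes_{\ZZ}\FFpb$ via Kummer theory and local class field theory, exhibit explicit Artin--Hasse-type generators on both sides, and use the Br\"uckner--Shafarevich--Vostokov formula to compare them --- matches the paper's strategy. But there is a genuine gap in how you propose to carry out the comparison. You plan to compute the ``full transition matrix'' of Hilbert pairings between the two bases and then match the span conditions in both directions, case by case. This is exactly the combinatorial morass the paper is at pains to avoid, and it is not clear it can be executed as stated: the non-vanishing entries of that pairing matrix are not computed anywhere, and extracting them from the Vostokov formula is much harder than what is actually needed. The paper's key structural move, which your proposal omits entirely, is a dimension count: Corollary~\ref{dimPsiequals} gives $\dim\Psi_{\sigma}(\chi_1,\chi_2)^{\operatorname{Gal}(L/K)=\chi^{-1}} = \nu' + \operatorname{Card}(J_{\operatorname{max}})$, which by construction equals $\dim L^{\operatorname{DDR}}_\sigma(\chi_1,\chi_2)$. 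This reduces the proposition to a \emph{single} inclusion, which in turn reduces to showing that elements of $\Psi_\sigma(\chi_1,\chi_2)$ pair to zero against the excluded DDR basis vectors $u_\kappa$ ($\kappa\notin\mu(J_{\operatorname{max}})$) and $u_{\operatorname{cyc}}$. One only ever needs \emph{vanishing} of certain pairings, never their exact values.

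Beyond that structural issue, the actual content of the proof is deferred in your proposal to ``bookkeeping''. That content consists of: (i) the valuation estimates showing the $(B_m)$ terms and the $u_{\operatorname{cyc}}$ pairing contribute no constant term, which rest on the strict inequality $\Omega_\tau > -p(p^f-1)/(p-1)$ and the congruence $n_\kappa' + p^m\Omega_\tau\equiv 0 \pmod{p^f-1}$; (ii) the identification, via \cite[3.6.7]{cegm17}, of the condition under which an $(A_m)$ term has non-zero constant term with the condition $\kappa\in\mu(J_{\operatorname{max}})$ --- this is the only place the definition of the shift function $\mu$ enters, and it is the crux of the combinatorial matching; and (iii) the degenerate case where $J_{\operatorname{max}}=\operatorname{Hom}_{\mathbb{F}_p}(k,\overline{\mathbb{F}}_p)$ and all $a_\tau=p-1$, in which both sides equal all of $H^1(G_K,\FFpb(\chi))$ and a separate argument (Lemma~\ref{lem-degen} plus Lemma~\ref{basis}) is required because the inequality in (i) fails. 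None of these appear in your plan, and without at least the dimension-count reduction the route you describe would require substantially more computation than the paper performs.
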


Of course this proposition follows immediately given that both subspaces $\Psi_{\sigma}(\chi_1,\chi_2)^{\operatorname{Gal}(L/K) = \chi_2/\chi_1}$ and $L^{\operatorname{DDR}}_{\sigma}(\chi_1,\chi_2)$ have the same interpretation in terms of crystalline lifts.  However, in the spirit of this entire paper, our calculations avoid any $p$-adic Hodge theoretic description. In particular, our calculations give an alternative proof of the results in \cite{ddr16} when $p>2$. We believe it is possible to use a strategy similar to the one we have used here to give a direct comparison of the results of this paper to the results of \cite{ste22} in the ramified case.

\subsection*{Method of proof}
To prove Theorem~\ref{intothm} we need to show that, if $\overline{r}$ admits a crystalline lift with Hodge--Tate weights corresponding to $\sigma$, then this imposes significant conditions on $\overline{r}$ which can ultimately be formulated in terms of the Artin--Hasse exponential. This is done in three steps:

\subsubsection*{Step 1} This uses the integral $p$-adic Hodge theory developed in \cite{gls15}. If $r$ is a crystalline lift of $\overline{r}$ witnessing $\sigma \in W^{\operatorname{cr}}(\overline{r})$, then \cite{gls15} gives a description, in terms of $\sigma$, of the shape of the reduction modulo $p$ of the Breuil--Kisin module $\overline{\mathfrak{M}}$ associated to $r$. Here $\overline{\mathfrak{M}}$ is a finite free $k[[u]] \otimes_{\mathbb{F}_p} \overline{\mathbb{F}}_p$-module equipped with a semi-linear Frobenius endomorphism, and \cite{gls15} describes the matrix of this endomorphism in terms of a particular choice of basis (see Proposition~\ref{gls}). Set $K_\infty =K(\pi^{1/p^\infty})$ for $\pi^{1/p^\infty}$ a compatible system of $p$-th power roots of $\pi \in K$. Then $\overline{\mathfrak{M}}$ and $\overline{r}|_{G_{K_\infty}}$ are related using the existence of a $\varphi,G_{K_\infty}$-equivariant identification
$$
\overline{\mathfrak{M}} \otimes_{k[[u]]} C^\flat = \overline{r}^\vee \otimes_{\mathbb{F}_p} C^\flat,
$$
where $C^\flat$ denotes a specific algebraic closure of $k((u))$ and $G_{K_\infty}$ acts trivially on $\overline{\mathfrak{M}}$. In particular, $\overline{r}^\vee|_{G_{K_\infty}} = (\overline{\mathfrak{M}} \otimes_{\mathbb{F}_p} C^\flat)^{\varphi=1}$. Concretely, if $\beta$ is a basis of $\overline{\mathfrak{M}}$ and $\alpha = \beta D$ generates $\overline{r}^\vee$ (so that $D$ is a matrix such that $\varphi(D^{-1})D$ equals the matrix of the Frobenius relative to $\beta$) then the $G_{K_\infty}$-action on $\alpha$ is given by
$$
\sigma(\alpha) = \alpha \sigma(D)D^{-1}.
$$
From this we deduce a statement of the following shape: if $\overline{r} \sim \left( \begin{smallmatrix}
	\chi_1 & c\\ 0 & \chi_2
\end{smallmatrix}\right)$, then there exists a subspace $\Psi^{\operatorname{AS}}_{\sigma}(\chi_1,\chi_2) \subset H^1(G_{K_\infty}, \FFpb(\chi_1/\chi_2))$ defined in terms of Artin--Scheier cocycles so that $\sigma \in W^{\operatorname{cr}}(\overline{r})$ implies
$$
c|_{G_{K_\infty}} \in \Psi_{\sigma}^{\operatorname{AS}}(\chi_1,\chi_2).
$$
We emphasise that everything so far follows, more or less, immediately from \cite{gls15}. In the unramified case this is the essential tool used to prove the conjecture of \cite{ddr16} in \cite{cegm17}.

\subsubsection*{Step 2}The second step is to upgrade the description of the $G_{K_\infty}$-action on $\overline{r}$, given in terms of $\overline{\mathfrak{M}}$, to a description of the $G_K$-action. For this we first recall that the action of $G_{K_\infty}$ on $C^\flat$ naturally extends to a $G_K$-action. Therefore, $C^\flat$-semi-linearly extending the $G_K$-action on $\overline{r}^\vee \otimes_{\mathbb{F}_p} C^\flat$ we obtain a $\varphi$-equivariant $G_K$-action on $\overline{\mathfrak{M}} \otimes_{k[[u]]} C^\flat$. Since the $G_K$-action on $\overline{r}$ comes from the reduction modulo $p$ of a crystalline representation this $G_K$-action must satisfy the following divisibility
$$
\sigma(m)-m \in \mathfrak{M} \otimes_{k[[u]]} u^{(e+p-1)/(p-1)}\mathcal{O}_{C^\flat},
$$
for all $\sigma \in G_K$ and $m \in \mathfrak{M}$.

On the other hand, ideas from \cite{B21} give a procedure which, in good cases, constructs an alternative $G_K$-action on $\overline{\mathfrak{M}} \otimes_{k[[u]]} C^\flat$. This is done as follows: choose a basis $\beta$ of $\overline{\mathfrak{M}}$ and define a ``naive'' $G_K$-action $\sigma_{\operatorname{naive},\beta}$ on $\overline{\mathfrak{M}} \otimes_{k[[u]]} C^\flat$ by semi-linearly extending the action which fixes $\varphi(\beta)$. In general, this action will not be $\varphi$-equivariant. However, one can attempt to produce a $\varphi$-equivariant action from it by considering 
$$
\sigma = \operatorname{lim}_{n \rightarrow \infty} \varphi^n \circ \sigma_{\operatorname{naive},\beta} \circ \varphi^{-n}.
$$
Typically (i.e. for an arbitrary Breuil--Kisin module) this limit will not converge. However, in our case this limit really exists, due to the special shape of $\overline{\mathfrak{M}}$ (and ultimately the fact that the Hodge--Tate weights of $r$ are sufficiently small). Furthermore, one shows that this is the unique $G_K$-action satisfying the above divisibility. Therefore, the $G_K$-action computed by this limit coincides with the $G_K$-action coming from $\overline{r}$. 

Concretely, if $\alpha = \beta D$ is a basis of $\overline{r}^\vee$ then the $G_K$-action on $\overline{r}^\vee$ is given by
$$
\sigma(\alpha) = \alpha \left( \operatorname{lim}_{n \rightarrow \infty} \varphi^n(\sigma(D)D^{-1}) \right).
$$ 
This allows us to reformulate the implication in the final part of Step (1); we obtain that $\sigma \in W^{\operatorname{cr}}(\overline{r})$ implies
$$
c \in \Psi_{\sigma}^{G_K\operatorname{-AS}}(\chi_1,\chi_2),
$$
where now $\Psi_{\sigma}^{G_K\operatorname{-AS}}(\chi_1,\chi_2) \subset H^1(G_K,\FFpb(\chi_1/\chi_2))$ is a subspace of certain ``$G_K$-Artin--Schreier'' coycles. 

\subsubsection*{Step 3} The final step is to produce a dictionary between the restriction of these``$G_K$-Artin--Schreier'' cocycles to $G_L$ and Kummer cocycles. This was done in a beautiful computation of Abrashkin \cite{abr97}. To be precise he considers any $h \in vl[[v]]$ and chooses $h' \in C^\flat$ so that $\varphi(h')-h' = h$. Then he considers the ``$G_L$-Artin--Schreier'' cocycle $G_L \rightarrow \mathbb{F}_p$ defined by 
$$
\sigma  \mapsto \operatorname{lim}_{n\to \infty} \varphi^n \left( \sigma\left(h'\right)- h'\right).
$$
Equivalently, this cocycle can be described as sending $\sigma  \in G_L$ onto the image of $ \sigma\left(h'\right)- h'$ under the map $\mathcal{O}_{C^\flat} \rightarrow \overline{\mathbb{F}}_p$. The restriction to $G_L$ of those cocycles in  $\Psi_{\sigma}^{G_K\operatorname{-AS}}(\chi_1,\chi_2)$ all have this form. Abrashkin gives an explicit formula, using the map $\Psi$ from above, which expresses this cocycle as a Kummer cocycle (see Proposition~\ref{Abr}). Using this formula we obtain a Kummer theoretic description the restriction of $\Psi_\sigma^{G_K-\operatorname{AS}}(\chi_1,\chi_2)$ to $G_L$ in terms of an explicit $\Psi_\sigma(\chi_1,\chi_2)$. We deduce, for $\overline{r} \sim \left( \begin{smallmatrix}
	\chi_1 & c \\ 0 & \chi_2
\end{smallmatrix}\right)$, that 
$\sigma \in W^{\operatorname{cr}}(\overline{r})$ implies
$$
c|_{G_L} \in \Psi_{\sigma}(\chi_1,\chi_2).
$$  
It only remains to prove the opposite implication: if $c|_{G_L} \in \Psi_\sigma(\chi_1,\chi_2)$ then we must produce a crystalline lift of $\overline{r}$ witnessing $\sigma \in W^{\operatorname{cr}}(\overline{r})$. We do this in the standard way by producing crystalline lifts of the characters $\chi_1$ and $\chi_2$ and then considering the image in $H^1(G_K,\FFpb(\chi_1/\chi_2))$ of the space of crystalline extensions of these two lifts. By the above this image is contained in $\Psi_{\sigma}(\chi_1,\chi_2)$ and we will be done if these two subspaces are equal. This follows by comparing dimensions.

\subsection*{Acknowledgements} Both authors would like to thank Fred Diamond for introducing them to the fascinating topic of Serre weights. The first author would like to thank the Max Planck Institute for Mathematics, Bonn for support during the early part of this project. The first author was also funded by the Deutsche Forschungsgemeinschaft (DFG, German Research Foundation) under Germany's Excellence Strategy EXC 2044 –390685587, Mathematics Münster: Dynamics–Geometry–Structure. The second author would like to thank the Mathematical Institute at Leiden University for their continued support especially during the COVID-19 pandemic periods in 2020 and 2021.

\section{Serre weights}\label{serreweights}

Throughout $K/\mathbb{Q}_p$ is a finite extension with residue field $k$. Set $f = [k:\mathbb{F}_p]$ and $e = e(K/\mathbb{Q}_p)$ the ramification degree. Choose a uniformiser $\pi \in K$ and a $(p^f-1)$-th root $\pi^{1/(p^f-1)}$ in a completed algebraic closure $C$ of $K$. Set $L$ equal the unramified extension of $K(\pi^{1/p^f-1})$ of degree $p^f-1$. Write $l$ for the residue field of $L$.
\begin{defn}
	A Serre weight (for $\operatorname{GL}_2(k)$) is an isomorphism class of irreducible $\overline{\mathbb{F}}_p$-representations of $\operatorname{GL}_2(k)$. Any such class can be represented by
	$$
	\sigma_{a,b} := \bigotimes_{\tau \in \operatorname{Hom}_{\mathbb{F}_p}(k,\overline{\mathbb{F}}_p)} \left( \operatorname{det}^{b_\tau} \otimes_{k} \operatorname{Sym}^{a_\tau - b_\tau} k^2 \right) \otimes_{k,\tau} \overline{\mathbb{F}}_p
	$$
	for uniquely determined integers $a_\tau,b_\tau$ satisfying $b_\tau, a_\tau - b_\tau \in [0,p-1]$ and not all $b_\tau$ equal to $p-1$.
\end{defn}

Suppose $V$ is a Hodge--Tate representation of $G_K$ on a $\overline{\mathbb{Q}}_p$-vector space. For each $\kappa \in \operatorname{Hom}_{\mathbb{Q}_p}(K,\overline{\mathbb{Q}}_p)$, the $\kappa$-Hodge--Tate weights $\operatorname{HT}_\kappa(V)$ of $V$ is the multiset of integers which contains $i$ with multiplicity
$$
\operatorname{dim}_{\overline{\mathbb{Q}}_p}(V \otimes_{\kappa,K} C(-i))^{G_K}.
$$ 
Here $C(i)$ is a completed algebraic closure of $K$ with the twisted $G_K$-action $\sigma(a) = \chi_{\operatorname{cyc}}(\sigma)^i \sigma(a)$ for $\chi_{\operatorname{cyc}}$ the $p$-adic cyclotomic character. Thus, $\operatorname{HT}_{\kappa}(\chi_{\operatorname{cyc}}) = \lbrace 1 \rbrace$ for every $\kappa$.

\begin{defn}\label{HTwt}
	A lift of a Serre weight $\sigma = \sigma_{a,b}$ is a tuple of pairs of integers $\widetilde{\sigma} = (\widetilde{a}_\kappa,\widetilde{b}_\kappa)_{\kappa \in \operatorname{Hom}_{\mathbb{Q}_p}(K,\overline{\mathbb{Q}}_p)}$ such that, for each $\tau: k \rightarrow \overline{\mathbb{F}}_p$, there is an indexing
	\begin{equation}\label{index}
		\lbrace \kappa \in \operatorname{Hom}_{\mathbb{Q}_p}(K,\overline{\mathbb{Q}}_p) \mid \kappa|_k = \tau \rbrace = \lbrace \tau_0,\ldots,\tau_{e-1} \rbrace
	\end{equation}
	so that
	$$
	(\widetilde{a}_\kappa,\widetilde{b}_\kappa) = \begin{cases}
		(a_\tau+1,b_\tau) & \text{if $\kappa = \tau_0$;} \\
		(1,0) & \text{if $\kappa = \tau_i$ for $i >0$.}
	\end{cases}
	$$
	We say that a crystalline representation of $G_K$ on a finite free $\overline{\mathbb{Z}}_p$-module $V$ has Hodge type $\sigma$ if there exists a lift $\widetilde{\sigma} = (\widetilde{a}_\kappa,\widetilde{b}_\kappa)$ so that
	$$
	\operatorname{HT}_\kappa(V) = (\widetilde{a}_\kappa,\widetilde{b}_\kappa)
	$$
	for every $\kappa:K\rightarrow \overline{\mathbb{Q}}_p$.
\end{defn}
\begin{defn}
	For a continuous $\overline{r}: G_K \rightarrow \operatorname{GL}_2(\overline{\mathbb{F}}_p)$, we let $W^{\operatorname{cr}}(\overline{r})$ denote the set of Serre weights $\sigma_{a,b}$ for which there exists a crystalline representation of $G_K$ on a finite free $\overline{\mathbb{Z}}_p$-module $V$ with Hodge type $\sigma_{a,b}$ and $V \otimes_{\overline{\mathbb{Z}}_p} \overline{\mathbb{F}}_p \cong \overline{r}$.
\end{defn}

The following motivates the definition of $W^{\operatorname{cr}}(\overline{r})$. Suppose that $F$ is a totally real extension of $\mathbb{Q}$. Let $\overline{\rho}: G_F \rightarrow \operatorname{GL}_2(\mathbb{F})$ be a continuous and absolutely irreducible representation which arises as the reduction modulo~$p$ of a $p$-adic representation associated to a Hilbert modular eigenform of parallel weight $2$. For each place $v$ of $F$ dividing $p$, let $k_v$ denote the residue field of $F_v$. Let $D$ be a quarternion algebra with centre $F$ and which is split at all places dividing $p$, and at zero or one infinite place. In \cite[4.3.3]{GK14} it is explained what it means for $\overline{\rho}$ to be modular for $D$ of weight $\sigma = \otimes_{v\mid p} \sigma_v$ (each $\sigma_v$ being a Serre weight for $\operatorname{GL}_2(k_v)$). 

\begin{thm}
	Suppose that $p>2$. Assume also that $\overline{\rho}$ is modular, compatible with $D$ in the sense of \cite[4.3.4]{GK14}, and that $\overline{\rho}|_{G_{F(\zeta_p)}}$ is irreducible. If $p=5$ assume that the projective image of $\overline{\rho}|_{G_{F(\zeta_p)}}$ is not isomorphic to $A_5$. Then $\overline{\rho}$ is modular for $D$ of weight $\sigma = \otimes_{v\mid p} \sigma_v$ if and only if $\sigma_v \in W^{\operatorname{cr}}(\overline{\rho}|_{G_{F_v}})$ for each $v\mid p$.
\end{thm}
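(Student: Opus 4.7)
If $\overline{\rho}$ is modular for $D$ of weight $\sigma$, then it arises as the reduction modulo $p$ of a $p$-adic Galois representation $\rho_f$ attached to a Hilbert eigenform $f$ of the appropriate weight. By the local--global compatibility results of Carayol, Taylor, Saito and Blasius--Rogawski, $\rho_f|_{G_{F_v}}$ is crystalline at each $v \mid p$ with Hodge--Tate weights matching $\sigma_v$ in the manner prescribed by Definition~\ref{HTwt}. Therefore $\rho_f|_{G_{F_v}}$ is a crystalline lift witnessing $\sigma_v \in W^{\operatorname{cr}}(\overline{\rho}|_{G_{F_v}})$.

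\textbf{The hard direction.} Suppose $\sigma_v \in W^{\operatorname{cr}}(\overline{\rho}|_{G_{F_v}})$ for each $v \mid p$. My plan is: (i) pick a crystalline lift of $\overline{\rho}|_{G_{F_v}}$ of Hodge type $\sigma_v$ at each $v \mid p$, and then use Khare--Wintenberger style globalisation (enabled by the bigness of $\overline{\rho}|_{G_{F(\zeta_p)}}$) to assemble these into a global lift $\rho: G_F \rightarrow \operatorname{GL}_2(\overline{\mathbb{Z}}_p)$ which is crystalline of the prescribed Hodge type at every $v \mid p$ and minimally ramified away from $p$; (ii) apply Taylor's potential modularity theorem to find a soluble totally real extension $F'/F$ over which $\rho|_{G_{F'}}$ is automorphic; (iii) invoke a Kisin-style modularity lifting theorem adapted to Hilbert modular forms to descend automorphy from $F'$ to $F$, and then use local--global compatibility again to conclude that the resulting Hilbert eigenform has weight $\sigma$.

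\textbf{Main obstacle.} Step (iii) is the heart of the matter. Kisin's crystalline framed deformation rings at a generic $\operatorname{GL}_2$ Hodge type are typically neither connected nor smooth, so the classical Taylor--Wiles--Kisin patching must be supplemented by a careful analysis of their components. This analysis, carried out via Breuil--Kisin module theory in \cite{gls15} (building on \cite{GK14,BLGG13,New14}), identifies which Hodge type is supported on each component and underlies the weight-switching arguments used to move between Hodge types inside the modular locus. The hypotheses $p>2$, the irreducibility of $\overline{\rho}|_{G_{F(\zeta_p)}}$, and the $A_5$ exclusion at $p=5$ are precisely what is needed to bring the Taylor--Wiles method to bear and to ensure that the relevant local deformation rings can be studied via automorphic forms in characteristic zero.
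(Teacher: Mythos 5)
The paper does not actually prove this theorem: its ``proof'' is the single citation to \cite[\S 4.1--4.2]{gls15}, which in turn rests on \cite{GK14} and \cite{BLGG13}. So the question is whether your sketch faithfully reproduces that argument, and as written it does not: it is an accurate survey of the strategy in the literature, but both directions contain genuine gaps. For the ``easy'' direction, being modular for $D$ of weight $\sigma$ is a statement about mod-$p$ cohomology of a quaternionic Shimura variety; before local--global compatibility can be applied you must first lift the mod-$p$ eigensystem to a characteristic-zero automorphic form on $D$ whose algebraic weight is a lift of $\sigma$, transfer it to $\operatorname{GL}_2$ via Jacquet--Langlands, and then check that the Hodge--Tate weights so obtained constitute a lift of $\sigma_v$ in the precise sense of Definition~\ref{HTwt}. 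Your first paragraph asserts the existence of such an eigenform $f$ rather than producing it.

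For the hard direction, your steps (i)--(iii) describe the route of \cite{BLGG13}, but essentially all of the content sits in step (iii), which you defer. The automorphy lifting theorem you would need --- crystalline of Hodge type $\sigma_v$, with $\sigma_v$ possibly outside the Fontaine--Laffaille range because $e>1$ or $a_\tau-b_\tau=p-1$ --- is not available off the shelf, precisely because the relevant local crystalline deformation rings need not be connected; the actual proofs go through potentially Barsotti--Tate lifts, inertial types, and the patched-module support and multiplicity computation of \cite{GK14} (the Breuil--M\'ezard philosophy), rather than through a direct modularity lifting theorem at Hodge type $\sigma$. Moreover step (i), producing a global lift with prescribed crystalline behaviour at every $v\mid p$ and compatible with $D$ at the ramified places, is itself a nontrivial theorem rather than a routine Khare--Wintenberger application. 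Your closing paragraph names the obstacle correctly but does not resolve it. Since the paper itself treats this theorem as a black box, the honest fix is to present this as a citation to \cite{GK14,BLGG13,gls15}, not as an argument.
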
 
\begin{proof}
	See \cite[\S 4.1 and \S4.2]{gls15}.
\end{proof}
\section{Explicit Serre weights in the semisimple case}\label{semisimplesection}

In this section we recall explicit descriptions of $W^{\operatorname{cr}}(\overline{r})$ when $\overline{r}$ is semisimple. Recall our fixed choice of uniformiser $\pi \in K$ as well as the $(p^f-1)$-th root $\pi^{1/(p^f-1)}$. Using this we can define a character 
$$
\omega:G_K\rightarrow k^\times
$$
by setting $\omega(\sigma)$ equal the image of $\sigma(\pi^{1/(p^f-1)})/ \pi^{1/(p^f-1)}$ in $k^\times$. Note that $\omega$ depends upon $\pi^{1/(p^f-1)}$, but its restriction to the inertia subgroup $I_K$ does not. For $\tau:k \rightarrow \overline{\mathbb{F}}_p$, set $\omega_\tau := \tau \circ \omega$. Then, for every character $\chi:G_K\rightarrow \overline{\mathbb{F}}_p^\times$, one can write 
$$\chi|_{I_K} = \prod_\tau \omega_\tau^{n_\tau}
$$
for some integers $n_\tau$. The $n_\tau$ are uniquely determined if we further ask that $n_\tau \in [1,p]$ and not every $n_\tau$ equals $p$. Notice also that, since $\omega_{\tau}^p = \omega_{\tau \circ \varphi}$, we can write $\chi|_{I_K} = \omega_\tau^{\Omega_{\tau, n}}$ for any $\tau:k \rightarrow \overline{\mathbb{F}}_p$, where
\begin{equation}\label{omegadef}
	\Omega_{\tau, n} := \sum_{i=0}^{f-1} p^i n_{\tau \circ \varphi^i}.
\end{equation}

\begin{defn}\label{exp}
	Let $\overline{r}: G_K \rightarrow \operatorname{GL}_2(\overline{\mathbb{F}}_p)$ be continuous and semisimple. Following \cite[4.1]{gls15} we define a set of Serre weights $W^{\operatorname{exp}}(\overline{r})$ as follows:
	\begin{itemize}
		\item If $\overline{r}$ is a direct sum of two characters, then $\sigma_{a,b} \in W^{\operatorname{exp}}(\overline{r})$ if there exists
		$$
		J \subset \operatorname{Hom}_{\mathbb{F}_p}(k,\overline{\mathbb{F}}), \qquad x_\tau \in [0,e-1] \text{ for each $\tau \in  \operatorname{Hom}_{\mathbb{F}_p}(k,\overline{\mathbb{F}}_p)$}
		$$ so that
		$$
		\overline{r}|_{I_K} \cong \begin{pmatrix} 
			\prod_{\tau \in J} \omega_{\tau}^{a_\tau + 1 + x_{\tau}} \prod_{\tau \not\in J} \omega_{\tau}^{b_\tau + x_{\tau}} & 0 \\ 
			0 & \prod_{\tau \not\in J} \omega_{\tau}^{a_\tau + e - x_{\tau}} \prod_{\tau \in J} \omega_{\tau}^{b_\tau + e-1 - x_{\tau}}
		\end{pmatrix}.
		$$
		\item If $\overline{r}$ is irreducible, then $\sigma_{a,b} \in W^{\operatorname{exp}}(\overline{r})$ if and only if there exists 
		$$
		J \subset \operatorname{Hom}_{\mathbb{F}_p}(k_2,\overline{\mathbb{F}}), \qquad x_\tau \in [0,e-1] \text{ for each $\tau \in  \operatorname{Hom}_{\mathbb{F}_p}(k,\overline{\mathbb{F}}_p)$}
		$$
		so that
		$$
		\overline{r}|_{I_K} \cong \begin{pmatrix} 
			\prod_{\tau \in J} \omega_{\tau}^{a_\tau + 1 + x_{\tau|_k}} \prod_{\tau \not\in J} \omega_{\tau}^{b_\tau + x_{\tau|_k}} & 0 \\ 
			0 & \prod_{\tau \not\in J} \omega_{\tau}^{a_\tau + 1 + e-1+ x_{\tau|_k}} \prod_{\tau \in J} \omega_{\tau}^{b_\tau + e-1- x_{\tau|_k}}
		\end{pmatrix}
		$$ 
		and so that $\operatorname{Hom}_{\mathbb{F}_p}(k_2,\overline{\mathbb{F}}_p)$ is the disjoint union of $J$ and $\{\tau \circ \sigma \mid \tau \in J\}$, where $\sigma$ denotes the non-trivial element of $\operatorname{Gal}(k_2/k)$; here $k_2$ denotes the unique degree $2$ extension of $k$.
	\end{itemize}
\end{defn}

\begin{thm}[Gee--Liu--Savitt, Wang]\label{semisimple}
	If $\overline{r}$ is semisimple, then $W^{\operatorname{exp}}(\overline{r}) = W^{\operatorname{cr}}(\overline{r})$.
\end{thm}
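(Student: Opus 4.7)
The plan is to treat the two inclusions $W^{\operatorname{exp}}(\overline{r}) \subseteq W^{\operatorname{cr}}(\overline{r})$ and its reverse separately, after reducing to the case of crystalline characters. The basic input I would use is that a crystalline character $\chi\colon G_K \to \overline{\mathbb{Z}}_p^\times$ is determined by its Hodge--Tate weights $(n_\kappa)_\kappa$ up to unramified twist, and its inertia restriction is given explicitly by
\[
\chi|_{I_K} = \omega_\tau^{\Omega_{\tau,n}},
\]
where $\Omega_{\tau,n}$ is as in (\ref{omegadef}). Consequently one has complete control over the inertia restriction of a crystalline character purely in terms of its prescribed Hodge--Tate weights.

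For $W^{\operatorname{exp}}(\overline{r}) \subseteq W^{\operatorname{cr}}(\overline{r})$ the strategy is constructive. Given combinatorial data $(J,(x_\tau))$ as in Definition~\ref{exp}, I would distribute the Hodge--Tate weights prescribed by $\sigma_{a,b}$ (namely $(a_\tau+1,b_\tau)$ at $\tau_0$ and $(1,0)$ at each $\tau_i$ with $i>0$) between two crystalline characters: the set $J$ selects which character receives the ``large'' weight $a_\tau+1$ at the distinguished embedding $\tau_0$, while $x_\tau$ counts how many of the $(1,0)$ weights at the remaining embeddings flow into each character. A direct calculation with the formula for $\Omega_{\tau,n}$ then matches the resulting inertia characters against those in Definition~\ref{exp}. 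After adjusting by unramified twists (which do not affect the inertia restriction), one obtains a crystalline lift of $\overline{r}$ of Hodge type $\sigma_{a,b}$. The irreducible case is reduced to the split case by restricting to $G_{K_2}$, where $K_2$ is the unramified quadratic extension of $K$, carrying out the split construction there, and inducing back to $G_K$.

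The deeper direction $W^{\operatorname{cr}}(\overline{r}) \subseteq W^{\operatorname{exp}}(\overline{r})$ requires showing that every crystalline lift, even when it does not split as a direct sum of characters, forces the combinatorial shape of Definition~\ref{exp}. For this I would pass to Breuil--Kisin modules. Given a crystalline lift $V$ of Hodge type $\sigma_{a,b}$, the associated Breuil--Kisin module $\mathfrak{M}$ is a free module of rank $2$ whose Frobenius has elementary divisors controlled by $\sigma_{a,b}$; its reduction $\overline{\mathfrak{M}}$ recovers $\overline{r}|_{G_{K_\infty}}$. Because $\overline{r}$ is semisimple, the diagonal characters of $\overline{r}^{\operatorname{ss}}|_{I_K}$ can be read off from the shape of $\overline{\mathfrak{M}}$, and more precisely from the ``partial'' determinants of the Frobenius matrix along each embedding $\tau$.

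The main obstacle is the enumeration of all possible shapes of $\overline{\mathfrak{M}}$ compatible with a given Hodge type, and the verification that they correspond bijectively (up to the ambiguity in $(J,(x_\tau))$) to the combinatorial data of Definition~\ref{exp}. Each individual shape reduces to a concrete matrix computation over $k[[u]] \otimes_{\mathbb{F}_p} \overline{\mathbb{F}}_p$, but organising the case analysis so that no inertia characters are missed and no extraneous ones introduced is delicate; in the irreducible case one additionally has to descend from $k_2$ to $k$ and check compatibility with the $\operatorname{Gal}(k_2/k)$-action on the indexing set. This enumeration is the heart of the arguments in \cite[\S 4.1--4.2]{gls15}, extended to the irreducible case by Wang.
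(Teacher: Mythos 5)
Your outline is sound and matches the paper's treatment: the paper does not reprove this theorem but simply cites \cite[5.1.5]{gls15} (for $p>2$) and \cite[Theorem~5.4]{wan17} (for $p=2$), and your sketch is an accurate description of the strategy of those references, with the substantive combinatorial enumeration of Breuil--Kisin shapes correctly deferred to them. The only point you would need to track when translating is that the paper's Definition~\ref{exp} renormalises the parameter of loc.\ cit., replacing $x_\tau$ by $e-1-x_\tau$ for $\tau \notin J$.
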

\begin{proof}
	When $p>2$ this is \cite[5.1.5]{gls15} except with $x_\tau$ from loc. cit. replaced with $e-1-x_\tau$ for $\tau \not\in J$ (this renormalisation is for convenience later on). When $p=2$ the methods of Gee--Liu--Savitt have been adapted by Wang (see \cite[Theorem~5.4]{wan17}).
\end{proof}

When $\overline{r}$ is reducible but not semisimple \cite{gls15} shows that  $W^{\operatorname{cr}}(\overline{r}) \subset W^{\operatorname{exp}}(\overline{r}^{\operatorname{ss}})$. However, this inclusion is rarely an equality. Our goal is to give a an explicit condition on the extension class of $\overline{r}$ which determines whether $\sigma_{a,b} \in W^{\operatorname{exp}}(\overline{r}^{\operatorname{ss}})$ is contained in $W^{\operatorname{cr}}(\overline{r})$.

\section{The Artin--Hasse exponential}\label{AH}

In this section we use the Artin--Hasse exponential to construct certain subspaces of $H^1(G_L,\overline{\mathbb{F}}_p)$ from power series in $l[[v]] \otimes_{\mathbb{F}_p} \overline{\mathbb{F}}_p$. Later (in Section~\ref{secKummer}) we will use these subspaces to define versions of $W^{\operatorname{exp}}(\overline{r})$ for non-semisimple $\overline{r}$. In order to apply results from \cite{vos79} and \cite{abr97} we assume that $p>2$ in this section.
\begin{con}\label{EAH}
	Recall that $l$ denotes the residue field of $L$. Then \cite[Proposition 1]{vos79} produces an isomorphism of $\mathbb{Z}_p$-modules
	$$
	E^{\operatorname{AH}}: vW(l)[[v]] \xrightarrow{\sim} 1 + vW(l)[[v]],
	$$
	given by
	$$
	x \mapsto \operatorname{exp}\left( \sum_{n \geq 0} (\frac{\varphi}{p})^n(x) \right).
	$$
	Here $\operatorname{exp}(x)=\sum_{i \geq 0} \frac{x^i}{i!}$ and $\varphi$ denotes the $\mathbb{Z}_p$-linear operator on $W(l)[[v]]$, which acts as the Witt vector Frobenius on $W(l)$ and which sends $v \mapsto v^p$. Applying $\otimes_{\mathbb{Z}_p} \mathbb{F}_p$ produces an isomorphism $vl[[v]] \xrightarrow{\sim} (1+vW(l)[[v]]) \otimes_{\mathbb{Z}_p} \mathbb{F}_p$.
	We can extend $\overline{E}^{\operatorname{AH}}$ to a surjective homomorphism 
	$$
	\overline{E}^{\operatorname{AH}}: l[[v]] \rightarrow W(l)((v))^\times \otimes_{\mathbb{Z}} \mathbb{F}_p 
	$$
	by choosing any surjective group homomorphism $\psi:l \rightarrow \mathbb{Z}/p\mathbb{Z}$ and setting $\overline{E}^{\operatorname{AH}}(x) = v^{\psi(x)}$ for $x \in l$.
\end{con}

Composing $\overline{E}^{\operatorname{AH}}$ with evaluation at $v= \pi^{1/(p^f-1)}$ produces a homomorphism
\begin{equation}\label{map}
	\Psi_0: l[[v]] \rightarrow L^\times \otimes_{\mathbb{Z}} \mathbb{F}_p = H^1(G_L,\mu_p(L))	
\end{equation}
Here $\mu_p(L)$ denotes the group of $p$-th roots of unity in $L$ and the identification on the right is given by the Kummer map (notice that by construction $L$ contains a primitive $p$-th root of unity).

	\begin{rem}
    The reason for making the somewhat artificial extension of $\overline{E}^{\operatorname{AH}}$ from $vl[[v]]$ to $l[[v]]$ is that it allows us to give a uniform statement of our main theorem. In all but one case we will only need to view $\overline{E}^{\operatorname{AH}}$ (or $\Psi_0$) as a function on $vl[[v]]$. To incorporate the one degenerate case however, it is necessary to have a map surjecting onto $H^1(G_L,\mu_p(L))$; for this reason we ask that the constant terms in $l[[v]]$ be mapped onto powers of $v$. See Lemma~\ref{lem-degen} and Corollary~\ref{cor-degen} for more precise results regarding this degenerate case.
\end{rem}

The motivation for considering $\Psi_0$ comes a result of Abrashkin \cite{abr97}, which we will explain now.

\begin{defn}
	To state Abrashkin's result we write $\mathcal{O}_C$ for the ring of integers in the completed algebraic closure $C$ of $K$, and $\mathcal{O}_{C^\flat} := \varprojlim_{x\mapsto x^p} \mathcal{O}_C/p$ for its tilt. Recall that $\mathcal{O}_{C^\flat}$ multiplicatively identifies with $\varprojlim_{x\mapsto x^p} \mathcal{O}_C$. Fix a choice of compatible system $\pi^{1/(p^f-1)p^\infty} \in C$  of $p$-th power roots of $\pi^{1/(p^f-1)}$ so that 
	$$
	(\pi^{1/(p^f-1)},\pi^{1/p(p^f-1)}, \pi^{1/p^2(p^f-1)},\ldots) \in \mathcal{O}_{C^\flat}
	$$
	Then $v \mapsto (\pi^{1/(p^f-1)},\pi^{1/p(p^f-1)}, \pi^{1/p^2(p^f-1)},\ldots)$ defines an embedding
	$$
	l[[v]] \hookrightarrow \mathcal{O}_{C^\flat}
	$$
	via which we view $\mathcal{O}_{C^\flat}$ as an $l[[v]]$-algebra. Set $u := v^{p^f-1}$. Thus, we also get an embedding $k[[u]] \hookrightarrow \mathcal{O}_{C^\flat}$. The ring $\mathcal{O}_{C^\flat}$ is $u$-adically complete and $C^\flat := \mathcal{O}_{C^\flat}[\frac{1}{u}] = \operatorname{Frac}\mathcal{O}_{C^\flat}$ is algebraically closed.
\end{defn}

Note that $G_K$-acts naturally on $\mathcal{O}_{C^\flat}$ via its action on $\mathcal{O}_C$. Under this action the subrings $l[[v]]$ and $k[[v]]$ are not $G_K$-stable. However, they are stable under the action of $G_{K_\infty}$ for $K_\infty =K(\pi^{1/p^\infty})$. Furthermore, this action factors through the surjection $G_{K_\infty} \rightarrow \operatorname{Gal}(L/K)$ (which exists because $L \cap K_\infty =K$ since $L/K$ is tamely ramified while $K_\infty/K$ is totally wildly ramified) and is concretely given by
\begin{equation}\label{G(L/K)-action}
	g \cdot \sum_{i \geq 0} f_i v^i = \sum_{i\geq 0} g(f_i) \omega(g)^i v^i
\end{equation}
for the character $\omega$ defined in Section~\ref{semisimplesection}.

\begin{thm}[Abrashkin]\label{Abr}
	Fix a generator $\epsilon_1 \in \mu_p(L)$ and choose $z(v) \in W(l)[[v]]$ with $z(\pi^{1/(p^f-1)}) = \epsilon_1$. Suppose that $h \in u^{-ep/(p-1)} vl[[v]]$. Then there exists a homomorphism $c_h:G_L \rightarrow \mathbb{F}_p$ such that: 
	\begin{itemize}
		\item If $H \in C^\flat$ satisfies $H^p - H = h$ then $g(H) - H \equiv c_h(g)$ modulo $\mathfrak{m}_{C^\flat}$ for every $g \in G_L$.
		\item The image of $h(z(v)^p-1) \in vl[[v]]$ under $\Psi_0$ from \eqref{map} is given by $g \mapsto \epsilon_1^{c_h(g)}$ for all $g \in G_L$.
	\end{itemize}
	In the second bullet point we write $z(v)$ for its image in $l[[v]]$ and use that $z(v)-1 \in u^{e/(p-1)}l[[v]]$ (which follows from the fact that $\epsilon_1-1$ has $p$-adic valuation $1/(p-1)$).
\end{thm}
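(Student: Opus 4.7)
The first bullet is a standard Artin--Schreier argument. Since $C^\flat$ is algebraically closed, fix $H \in C^\flat$ with $H^p - H = h$. For $g \in G_L$, subtracting $H^p - H = h$ from $g(H)^p - g(H) = g(h)$ and using characteristic $p$ gives
\[
(g(H) - H)^p - (g(H) - H) = g(h) - h.
\]
The key input is $g(h) - h \in \mathfrak{m}_{C^\flat}$. Although $l[[v]]$ is not $G_L$-stable, for $g \in G_L$ we have $g(\pi^{1/(p^f-1)}) = \pi^{1/(p^f-1)}$ while $g$ only rotates each higher root $\pi^{1/p^n(p^f-1)}$ by a $p^n$-th root of unity; tracking this through the tilt gives a valuation estimate for $g(v) - v$ which, combined with the polar bound $h \in u^{-ep/(p-1)} vl[[v]]$, forces $g(h) - h \in \mathfrak{m}_{C^\flat}$. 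Hence $g(H) - H$ reduces modulo $\mathfrak{m}_{C^\flat}$ to an element of $\overline{\mathbb{F}}_p$ satisfying $x^p = x$, i.e.\ to an element of $\mathbb{F}_p$; this is the definition of $c_h(g)$. The cocycle identity follows from
\[
g_1 g_2(H) - H = g_1\bigl(g_2(H) - H\bigr) + \bigl(g_1(H) - H\bigr),
\]
using that $c_h(g_2) \in \mathbb{F}_p$ is fixed by $g_1$.

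For the second bullet, lift $h(z(v)^p - 1)$ to $Y \in vW(l)[[v]]$ and set
\[
a := E^{\operatorname{AH}}(Y)|_{v = \pi^{1/(p^f-1)}} \in 1 + \mathfrak{m}_L,
\]
which represents $\Psi_0(h(z^p - 1))$ in $L^\times \otimes \mathbb{F}_p$. The associated Kummer cocycle is $g \mapsto g(a^{1/p})/a^{1/p}$ for any $p$-th root $a^{1/p}$ in $\overline{L}$; the goal is to match this with $g \mapsto \epsilon_1^{c_h(g)}$. Working in a subring of $W(\mathcal{O}_{C^\flat})[1/p]$ where both $\varphi$ and $1/p$ are available, lift $H$ to $\widetilde H$ with $\varphi(\widetilde H) - \widetilde H$ reducing to $h$ modulo $p$. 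The Artin--Hasse functional equation
\[
E^{\operatorname{AH}}(\varphi(x)) = E^{\operatorname{AH}}(x)^p \exp(-px),
\]
combined with $\varphi(z) = z^p$ and the Artin--Schreier relation $\varphi(\widetilde H) - \widetilde H \equiv h \pmod p$, permits one to telescope the defining series $\sum_{n \geq 0} (\varphi/p)^n Y$ and produce an explicit $p$-th root of $a$ whose Kummer cocycle evaluates to $g \mapsto \epsilon_1^{g(\widetilde H) - \widetilde H \bmod \mathfrak{m}_{C^\flat}} = \epsilon_1^{c_h(g)}$.

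\textbf{Main obstacle.} The principal difficulty is the explicit telescoping identity carried out inside $W(\mathcal{O}_{C^\flat})[1/p]$: this is Abrashkin's central computation in \cite{abr97}, conceptually parallel to the Br\"uckner--Shafarevich--Vostokov explicit reciprocity formula. Once it is in hand, the matching of cocycles on both sides is formal.
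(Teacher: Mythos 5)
Your proposal is correct and follows essentially the same route as the paper: the first bullet is handled by the standard Artin--Schreier argument whose only real content is the verification that $g(h)-h\in\mathfrak{m}_{C^\flat}$ (which the paper makes precise by noting that $g(v)/v-1$ generates the ideal $u^{ep/(p-1)}\mathcal{O}_{C^\flat}$, exactly cancelling the allowed pole of $h$), and the second bullet is deferred to Abrashkin's explicit computation in \cite{abr97}, which is precisely what the paper does as well.
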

\begin{proof}
	This is the main lemma in \cite[Section 2.3]{abr97} specialised to the case $M=1$. For the benefit of the reader let us reiterate why there exists a function $c_h: G_L \rightarrow \mathbb{F}_p$ so that
	$$
	g(H) - H \equiv c_h(g) \text{ modulo }\mathfrak{m}_{C^\flat}.
	$$
	We claim that it is enough to check that $g(h) - h \in \mathfrak{m}_{C^\flat}$. Indeed, if this is the case then any solution to $X^p-X = g(h)-h$ can be written as $X_0 + X_1$ with $X_0 \in \mathbb{F}_p$ and $X_1 \in \mathfrak{m}_{C^\flat}$. If we take $X = g(H)-H$ then we can set $c_h(g) = X_0$. 
	
	To check that $g(h) - h \in \mathfrak{m}_{C^\flat}$ notice that, since $h \in u^{-ep/(p-1)}vl[[v]]$, it is enough to check that $g(v^i) -v^i \in v^{i}u^{ep/(p-1)} \mathcal{O}_{C^\flat}$ for any $i \in \mathbb{Z}$. To see this write $g(v^i) - v^i = v^i(\zeta^i -1)$ for some element $\zeta = (1,\zeta_1,\zeta_2, \ldots) \in \mathcal{O}_{C^\flat}$. Then recall the well-known fact that  $\zeta -1$ generates the ideal $u^{ep/(p-1)}\mathcal{O}_{C^\flat}$ (see, for example, \cite[5.1.3]{Fon94}).
\end{proof}

As a first application of this result we produce a refinement of \eqref{map} by defining
$$
\Psi: u^{-ep/(p-1)}l[[v]]\otimes_{\mathbb{F}_p} \overline{\mathbb{F}}_p  \rightarrow H^1(G_L,\overline{\mathbb{F}}_p)
$$
as the $\overline{\mathbb{F}}_p$-linear extension of the composite
$$
u^{-ep/(p-1)} l[[u]] \xrightarrow{z(v)^p-1)} l[[v]] \xrightarrow{\Psi_0} H^1(G_L,\mu_p(L)) \rightarrow H^1(G_L,\mathbb{F}_p),
$$
where $z(v)$ is as in Theorem~\ref{Abr} and the last map is the identification induced by the choice of $\epsilon_1$. In other words, the last map sends $f: G_L \rightarrow \mu_p(L)$ onto the homomorphism $c:G_L \rightarrow \mathbb{F}_p$ characterised by $f(g) = \epsilon_1^{c(g)}$.

In the following corollary we write $\varphi$ for the $\overline{\mathbb{F}}_p$-linear extension of the $p$-th power map on $\mathcal{O}_{C^\flat}$.
\begin{cor}\label{Abrcor}\hfill
	\begin{enumerate}
		\item If $H \in C^\flat \otimes_{\mathbb{F}_p} \overline{\mathbb{F}}_p$ satisfies $\varphi(H) - H = h$ for $h \in u^{-ep/(p-1)}vl[[v]] \otimes_{\mathbb{F}_p} \overline{\mathbb{F}}_p$, then
		$$
		g(H) - H \equiv \Psi(h)(g) \text{ modulo }\mathfrak{m}_{C_\flat} \otimes_{\mathbb{F}_p} \overline{\mathbb{F}}_p
		$$
		for all $g \in G_L$.
		\item 	The map $\Psi$ is $\operatorname{Gal}(L/K)$-equivariant when restricted to $u^{-ep/(p-1)}vl[[v]]$.
		\item If $H \in l((v)) \otimes_{\mathbb{F}_p} \overline{\mathbb{F}}_p$ is such that $\varphi(H) -H \in u^{-ep/(p-1)}vl[[v]] \otimes_{\mathbb{F}_p} \overline{\mathbb{F}}_p$, then $\Psi(\varphi(H)-H) =0$.
	\end{enumerate}
\end{cor}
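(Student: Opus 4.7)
All three parts will be reduced to Theorem~\ref{Abr} together with the observation that an element of $\overline{\mathbb{F}}_p$ which vanishes modulo $\mathfrak{m}_{C^\flat} \otimes_{\mathbb{F}_p} \overline{\mathbb{F}}_p$ must itself be zero, since the composite $\overline{\mathbb{F}}_p \hookrightarrow \mathcal{O}_{C^\flat} \twoheadrightarrow \mathcal{O}_{C^\flat}/\mathfrak{m}_{C^\flat} = \overline{\mathbb{F}}_p$ is the identity.

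For (1) the plan is to write $h = \sum_j h_j \otimes \alpha_j$ with $h_j \in u^{-ep/(p-1)}vl[[v]]$ and $\alpha_j \in \overline{\mathbb{F}}_p$, and choose $H_j \in C^\flat$ with $\varphi(H_j) - H_j = h_j$ using that $C^\flat$ is algebraically closed. The given $H$ differs from $\sum_j H_j \otimes \alpha_j$ by a $\varphi$-fixed element of $C^\flat \otimes_{\mathbb{F}_p} \overline{\mathbb{F}}_p$, hence by an element of $\overline{\mathbb{F}}_p$, which is in particular $G_L$-fixed. Theorem~\ref{Abr} applied to each $h_j$ gives $g(H_j) - H_j \equiv c_{h_j}(g) \pmod{\mathfrak{m}_{C^\flat}}$ for $g \in G_L$; combining with the identification $H^1(G_L,\mu_p(L)) = H^1(G_L,\mathbb{F}_p)$ through $\epsilon_1$ used in defining $\Psi$ (which shows $\Psi(h_j) = c_{h_j}$) and summing over $j$ yields the claim.

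For (2) fix $\overline{g} \in \operatorname{Gal}(L/K)$ and a lift $g \in G_K$. The subspace $u^{-ep/(p-1)}vl[[v]] \otimes \overline{\mathbb{F}}_p$ is $G_K$-stable because $u = v^{p^f-1}$ is $\operatorname{Gal}(L/K)$-fixed (as $\omega(g) \in k^\times$ has order dividing $p^f-1$) and $vl[[v]]$ is preserved by \eqref{G(L/K)-action}. If $\varphi(H) - H = h$ then $\varphi(g(H)) - g(H) = \overline{g} \cdot h$, since $\varphi$ commutes with the $G_K$-action. For $\sigma \in G_L$, applying (1) to $h$ with lift $H$ at the element $g^{-1}\sigma g \in G_L$ (which is in $G_L$ because $L/K$ is Galois), and then applying $g$ while using that $\Psi(h)(g^{-1}\sigma g) \in \overline{\mathbb{F}}_p$ is $g$-fixed, yields
$$ \sigma(g(H)) - g(H) \equiv \Psi(h)(g^{-1}\sigma g) = (\overline{g} \cdot \Psi(h))(\sigma) \pmod{\mathfrak{m}_{C^\flat} \otimes \overline{\mathbb{F}}_p}. $$
Applying (1) instead to $\overline{g} \cdot h$ with lift $g(H)$ gives the same quantity congruent to $\Psi(\overline{g} \cdot h)(\sigma)$, and invoking the first-paragraph observation concludes.

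For (3) apply (1) with $h := \varphi(H) - H$, reducing to showing $\sigma(H) - H \in \mathfrak{m}_{C^\flat} \otimes \overline{\mathbb{F}}_p$ for every $\sigma \in G_L$. I expect the main obstacle to be a pole bound on $H$ deduced from the hypothesis. Writing $H = \sum_{i \geq -N} a_i v^i$ with $a_{-N} \neq 0$, the leading negative-valuation term of $\varphi(H) - H$ is $a_{-N}^p v^{-pN}$ whenever $N > 0$ (since $-pN < -N$), so $-pN \geq 1 - ep(p^f-1)/(p-1)$, giving $N < e(p^f-1)/(p-1)$; if $N \leq 0$ there is no pole and the bound below is immediate. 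For $\sigma \in G_L$, the ratio $\zeta := \sigma(v)/v \in \mathcal{O}_{C^\flat}$ is either $1$ or the tilt of a non-trivial compatible system of $p$-power roots of unity with initial term $1$, and in either case $v_v(\zeta^i - 1) \geq ep(p^f-1)/(p-1)$ by the fact recalled in the proof of Theorem~\ref{Abr}. Hence $v_v(\sigma(v^i) - v^i) \geq i + ep(p^f-1)/(p-1)$, so $v_v(\sigma(H) - H) \geq -N + ep(p^f-1)/(p-1) > 0$, and (1) then gives $\Psi(\varphi(H) - H)(\sigma) = 0$.
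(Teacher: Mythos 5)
Your proof follows the same route as the paper's: part (1) directly from Theorem~\ref{Abr}, part (2) by applying (1) to both $h$ and $\overline{g}\cdot h$ and comparing via conjugation, and part (3) by bounding the pole of $H$ and reusing the divisibility $v_v(\sigma(v^i)-v^i)\geq i+ep(p^f-1)/(p-1)$ from the proof of Theorem~\ref{Abr}; you supply some details (the decomposition in (1), the explicit pole bound in (3)) that the paper leaves implicit, and they are correct.

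The one real slip is in part (2): you take the lift $g$ of $\overline{g}$ in $G_K$ and assert that $u^{-ep/(p-1)}vl[[v]]\otimes_{\mathbb{F}_p}\overline{\mathbb{F}}_p$ is $G_K$-stable and that $g(h)=\overline{g}\cdot h$. Neither holds for a general $g\in G_K$: as the paper notes after introducing the embedding $l[[v]]\hookrightarrow\mathcal{O}_{C^\flat}$, the subring $l[[v]]$ is \emph{not} $G_K$-stable (e.g.\ $g(v)/v$ is the tilt of a possibly non-trivial compatible system of $p$-power roots of unity), and the formula \eqref{G(L/K)-action} only describes the action of $G_{K_\infty}$, which factors through $\Gal(L/K)$ because $L\cap K_\infty=K$. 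The fix is immediate and is what the paper does: choose the lift $g$ in $G_{K_\infty}$ (which surjects onto $\Gal(L/K)$); then $g(h)=\overline{g}\cdot h$ as you claim, and since the conjugation action on $H^1(G_L,\overline{\mathbb{F}}_p)$ is independent of the choice of lift, the rest of your argument goes through unchanged.
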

\begin{proof}
	Part (1) follows immediately from Theorem~\ref{Abr}. For part (2) choose $g_0 \in \operatorname{Gal}(L/K)$ and write $g_0$ also for a lift to $G_{K_\infty}$. If $h \in vl[[v]]\otimes_{\mathbb{F}_p} \overline{\mathbb{F}}_p$ and $H \in C^\flat \otimes_{\mathbb{F}_p} \overline{\mathbb{F}}_p$ satisfies $\varphi(H)-H$ then, by part (1),
	$$
	\begin{aligned}
		\Psi(g_0(h))(g) &\equiv g(g_0(H)) -g_0(H)  	\text{ modulo } \mathfrak{m}_{C^\flat}  \\
		&= g_0\left( g_0^{-1} g g_0(H) - H \right) 	\\
		&\equiv g_0 \Psi(h)(g_0^{-1}gg_0)
		\text{ modulo } \mathfrak{m}_{C^\flat}, 
	\end{aligned}
	$$
    Since the action of $\operatorname{Gal}(L/K)$ on $H^1(G_L,\overline{\mathbb{F}}_p)$ is given by 
    $$
    (g_0 \cdot c)(g) = g_0 c(g_0^{-1}gg_0),
    $$
    part (2) follows. For part (3) we note that $\varphi(H) - H \in u^{-ep/(p-1)}vl[[v]] \otimes_{\mathbb{F}_p} \overline{\mathbb{F}}_p$ implies 
    $$
    H \in u^{-ep/(p-1)}vl[[v]] \otimes_{\mathbb{F}_p} \overline{\mathbb{F}}_p.
    $$
    Therefore, the calculation made in the proof of Theorem~\ref{Abr} shows that $g(H) - H \in \mathfrak{m}_{C^\flat}\otimes_{\mathbb{F}_p} \overline{\mathbb{F}}_p$ for all $g \in G_L$.
\end{proof}

\section{Explicit Serre weights via Kummer theory}\label{secKummer}

In this section we define a version of $W^{\operatorname{exp}}(\overline{r})$ for reducible but not necessarily semisimple $\overline{r}$. In order to apply results from the previous section we assume $p>2$.
\begin{notation}
	We frequently use the observation that $k \otimes_{\mathbb{F}_p} \overline{\mathbb{F}}_p = \prod_{\tau:k\rightarrow \overline{\mathbb{F}}_p} \overline{\mathbb{F}}_p$, the identification being given by $a\otimes b \mapsto (\tau(a)b)_\tau$. Therefore $k[[v]] \otimes_{\mathbb{F}_p} \overline{\mathbb{F}}_p = \prod_{\tau:k\rightarrow \overline{\mathbb{F}}_p} \overline{\mathbb{F}}_p[[v]]$. Via this identification we can express an element of $y \in k[[v]] \otimes_{\mathbb{F}_p} \overline{\mathbb{F}}_p$ as a tuple $(y_\tau)_\tau$ with $y_\tau \in \overline{\mathbb{F}}_p[[v]]$.
\end{notation}
\begin{defn}\label{subspacedefn}
	Fix a Serre weight $\sigma$ and two characters $\chi_1,\chi_2:G_K \rightarrow \overline{\mathbb{F}}_p^\times$. Then we define
	$$
	\Psi_{\sigma}(\chi_1,\chi_2) := \sum_{J,x} \Psi_{\sigma,J,x} \subset H^1(G_L,\overline{\mathbb{F}}_p) 
	$$
	where:
	\begin{itemize}
		\item The sum runs over pairs $J \subset \operatorname{Hom}_{\mathbb{F}_p}(k,\overline{\mathbb{F}}_p)$ and $x = (x_\tau)_{\tau:k \rightarrow \overline{\mathbb{F}}_p}$ with $x_\tau \in [0,e-1]$ for which
		$$
		\chi_1|_{I_K} = \prod_{\tau \in J} \omega_\tau^{a_\tau + 1 +x_\tau} \prod_{\tau \not\in J} \omega_\tau^{b_\tau + x_\tau}, \qquad \chi_2|_{I_K} = \prod_{\tau \not\in J} \omega_{\tau}^{a_\tau + e - x_{\tau}} \prod_{\tau \in J} \omega_{\tau}^{b_\tau + e-1 - x_{\tau}}
		$$ 
		\item $\Psi_{\sigma,J,x}$ is the image of
		$$
		(v^{\Omega_{\tau,\sigma,J,x} - (p^f-1)x_\tau})_\tau l[[u]]\otimes_{\FFp}\overline{\mathbb{F}}_p
		$$
		under $\Psi:u^{-ep/(p-1)}l[[u]]\otimes_{\FFp}\overline{\mathbb{F}}_p \rightarrow H^1(G_L,\overline{\mathbb{F}}_p)$ for 
		$$
		\Omega_{\tau,\sigma,J,x} := \sum_{i=0}^{f-1} p^i\left( (a_{\tau \circ \varphi^i} -b_{\tau \circ \varphi^i} +1)(-1)^{\tau \circ \varphi^i \in J} + (e-1-2 x_{\tau \circ \varphi^i})\right),
		$$
		where 
		$$
		(-1)^{\tau \in J} := \begin{cases}
			1 & \text{if $\tau \not\in J$;} \\
			-1 & \text{if $\tau \in J$.}
		\end{cases}
		$$
		For this to make sense we need  $(v^{\Omega_{\tau,\sigma,J,x} - (p^f-1)x_\tau})_\tau l[[u]]\otimes_{\FFp}\overline{\mathbb{F}}_p \subset u^{-ep/(p-1)}l[[u]]\otimes_{\FFp}\overline{\mathbb{F}}_p$. This can be seen by observing that $\Omega_{\tau,\sigma,J,x}$ is minimal when $J = \operatorname{Hom}_{\mathbb{F}_p}(k,\overline{\mathbb{F}}_p)$ and each $x_\tau =e-1$, in which case $\Omega_{\tau,\sigma,J,x} \geq -(e+p-1)(p^f-1)/(p-1)$. Notice also that $\Psi_{\sigma}(\chi_1,\chi_2)$ is empty unless $\sigma  \in W^{\operatorname{exp}}(\chi_1 \oplus \chi_2)$.
	\end{itemize}
\end{defn} 
\begin{rem}\label{omegawhy}
	To motivate the appearance of the value $\Omega_{\tau,\sigma,J,x}$ set
	$$
	s_\tau = \begin{cases}
		b_\tau + x_\tau & \text{if $\tau \not\in J$} \\
		a_\tau + 1 + x_\tau & \text{if $\tau \in J$}
	\end{cases}, \qquad t_\tau = \begin{cases}
		a_\tau +e-x_\tau & \text{if $\tau \not\in J$} \\
		b_\tau + e-1-x_\tau & \text{if $\tau \in J$}
	\end{cases}
	$$
	and notice that $\Omega_{\tau,\sigma,J,x} = \Omega_{\tau,t-s}$ for $\Omega_{\tau,t-s}$ defined as in \eqref{omegadef}. Therefore, for $g \in \operatorname{Gal}(L/K)$, we have
	\begin{equation}\label{rem-omega}
		g (v^{\Omega_{\tau,\sigma,J,x}})_\tau = (v^{\Omega_{\tau,\sigma,J,x}}\omega_\tau^{\Omega_{\tau,\sigma,J,x}}(g))_\tau = \omega_{\sigma,J,x}(g) (v^{\Omega_{\tau,\sigma,J,x}} )_\tau,
	\end{equation}
	where
	$\omega_{\sigma,J,x} := \prod_\tau \omega_\tau^{t_\tau -s_\tau}$. Examining the first bullet point of Definition~\ref{subspacedefn} shows that $\Psi_{\sigma}(\chi_1,\chi_2)$ being non-empty implies
	$$
	\chi_2/\chi_1|_{I_K} = \omega_{\sigma,J,x}.
	$$
\end{rem}
\begin{defn}
	Suppose $\overline{r} \sim \left( \begin{smallmatrix}
		\chi_1 & c \\ 0 & \chi_2
	\end{smallmatrix}\right)$ and write $\chi = \chi_1/\chi_2$. We define $W^{\operatorname{exp}}(\overline{r})$ by asserting that $\sigma_{a,b} \in W^{\operatorname{exp}}(\overline{r})$ if and only if
	\begin{enumerate}
		\item $\sigma \in W^{\operatorname{exp}}(\overline{r}^{\operatorname{ss}})$ and
		\item under the identification $H^1(G_K,\overline{\mathbb{F}}_p(\chi)) = H^1(G_L,\overline{\mathbb{F}}_p)^{\operatorname{Gal}(L/K) = \chi^{-1}}$ induced by the inflation-restriction exact sequence we have $c \in \Psi_{\sigma}(\chi_1,\chi_2)$.
	\end{enumerate}
\end{defn}
Our main theorem (whose proof is completed in Section~\ref{finishproof}) is then as follows.

\begin{thm}\label{main}
	For $p>2$, we have $W^{\operatorname{exp}}(\overline{r}) = W^{\operatorname{cr}}(\overline{r})$.
\end{thm}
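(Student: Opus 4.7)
The plan is to prove both inclusions $W^{\operatorname{cr}}(\overline{r}) \subset W^{\operatorname{exp}}(\overline{r})$ and $W^{\operatorname{exp}}(\overline{r}) \subset W^{\operatorname{cr}}(\overline{r})$ by executing the three-step strategy outlined in the introduction. For the forward inclusion, assume $\sigma \in W^{\operatorname{cr}}(\overline{r})$ is witnessed by a crystalline lift $r$ of $\overline{r}$ of Hodge type $\sigma$. Condition~(1) of the definition of $W^{\operatorname{exp}}(\overline{r})$ is immediate from Theorem~\ref{semisimple} together with the known inclusion $W^{\operatorname{cr}}(\overline{r}) \subset W^{\operatorname{cr}}(\overline{r}^{\operatorname{ss}})$ from \cite{gls15}. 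For condition~(2), we appeal to the structure theorem from \cite{gls15} (which will appear in the body as Proposition~\ref{gls}) to obtain the Breuil--Kisin module $\overline{\mathfrak{M}}$ attached to $r$ together with an explicit Frobenius matrix relative to a chosen basis $\beta$. Writing $\alpha = \beta D$ for a basis of $\overline{r}^\vee$ inside $\overline{\mathfrak{M}} \otimes_{k[[u]]} C^\flat$, the formula $g(\alpha) = \alpha\, g(D)D^{-1}$ realises $c|_{G_{K_\infty}}$ inside a subspace $\Psi_\sigma^{\operatorname{AS}}(\chi_1,\chi_2)$ carved out by Artin--Schreier equations extracted from the explicit shape of $D$.

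The next step is to upgrade the description from $G_{K_\infty}$ to all of $G_K$. One $C^\flat$-semilinearly extends the $G_K$-action on $\overline{r}^\vee \otimes_{\FFp} C^\flat$ to $\overline{\mathfrak{M}} \otimes_{k[[u]]} C^\flat$, and independently constructs a candidate $G_K$-action via the limit $\sigma = \lim_{n\to\infty} \varphi^n \circ \sigma_{\operatorname{naive},\beta} \circ \varphi^{-n}$; convergence rests on the specific combinatorial form of $\overline{\mathfrak{M}}$ and the smallness of the Hodge--Tate weights inherent in a Serre weight lift, while uniqueness follows from the divisibility $\sigma(m)-m \in \mathfrak{M} \otimes u^{(e+p-1)/(p-1)} \mathcal{O}_{C^\flat}$, forcing the limit to agree with the genuine $G_K$-action coming from $r$. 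This refines our assertion to $c \in \Psi_\sigma^{G_K\operatorname{-AS}}(\chi_1,\chi_2)$. Restricting to $G_L$ and invoking Abrashkin's Theorem~\ref{Abr} along with Corollary~\ref{Abrcor}, the resulting cocycles $g \mapsto \lim_n \varphi^n(g(h')-h')$ are translated into Kummer cocycles through $\Psi$; matching exponents coming from $D$ against those in Definition~\ref{subspacedefn} identifies the image precisely with $\Psi_\sigma(\chi_1,\chi_2)$, giving $c|_{G_L} \in \Psi_\sigma(\chi_1,\chi_2)$ and completing the forward direction.

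For the reverse inclusion, suppose $\sigma \in W^{\operatorname{exp}}(\overline{r})$. Using $\sigma \in W^{\operatorname{exp}}(\overline{r}^{\operatorname{ss}}) = W^{\operatorname{cr}}(\overline{r}^{\operatorname{ss}})$, we explicitly construct crystalline characters $\tilde{\chi}_1,\tilde{\chi}_2$ lifting $\chi_1,\chi_2$ with Hodge--Tate weights dictated by a pair $(J,x)$ witnessing condition~(1). Let $L_\sigma(\chi_1,\chi_2) \subset H^1(G_K,\FFpb(\chi_1/\chi_2))$ denote the mod~$p$ image of the subspace of $H^1(G_K,\overline{\ZZ}_p(\tilde{\chi}_1/\tilde{\chi}_2))$ parametrising crystalline extensions. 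Every class in $L_\sigma(\chi_1,\chi_2)$ arises from some crystalline $\overline{r}$, so the forward direction already gives $L_\sigma(\chi_1,\chi_2) \subset \Psi_\sigma(\chi_1,\chi_2)^{\operatorname{Gal}(L/K) = \chi_2/\chi_1}$. A dimension count using a Bloch--Kato-type formula for the left side, combined with a direct calculation from Definition~\ref{subspacedefn} for the right (summing over all admissible pairs $(J,x)$ and using the explicit form of $\Psi$), forces equality. Hence $c$ lies in $L_\sigma(\chi_1,\chi_2)$ and lifts to a crystalline extension, yielding $\sigma \in W^{\operatorname{cr}}(\overline{r})$.

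The principal obstacle will be the convergence and uniqueness analysis of $\lim_n \varphi^n \circ \sigma_{\operatorname{naive},\beta} \circ \varphi^{-n}$ in Step~2: this is where the argument genuinely depends on the fine structure of $\overline{\mathfrak{M}}$ from \cite{gls15} together with the Hodge--Tate bounds, and it is what turns a statement about $G_{K_\infty}$ into one about $G_K$. A secondary technical point is the final dimension matching needed in the reverse inclusion, which requires careful bookkeeping over the parameters $(J,x)$; the degenerate cases where $\chi_1/\chi_2$ is trivial or an unramified twist of the cyclotomic character will need separate treatment, as flagged by Theorem~\ref{thm2} and the remark following Construction~\ref{EAH} regarding the extension of $\overline{E}^{\operatorname{AH}}$ to $l[[v]]$.
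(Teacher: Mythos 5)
Your proposal follows essentially the same route as the paper: the forward inclusion via the Breuil--Kisin/Galois-action/Abrashkin chain (culminating in Corollary~\ref{ASfinal}), the reverse inclusion via crystalline lifts of the two characters and a Bloch--Kato dimension count against $\Psi_\sigma(\chi_1,\chi_2)$, and separate treatment of the degenerate case where $\chi_1/\chi_2$ is an unramified twist of the cyclotomic character. The one execution detail to flag is that the dimension count in the reverse direction must be run with the \emph{maximal} pair $(J_{\operatorname{max}},x_{\operatorname{max}})$ of Proposition~\ref{proposition-maximal}, since $\Psi_{\sigma}(\chi_1,\chi_2)=\Psi_{\sigma,J_{\operatorname{max}},x_{\operatorname{max}}}$ and an arbitrary admissible pair $(J,x)$ would produce a Bloch--Kato image of strictly smaller dimension, leaving only a proper containment.
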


Notice that the definition of $\Psi_{\sigma}(\chi_1,\chi_2)$ is insensitive to twisting the characters $\chi_1$ and $\chi_2$ by unramified characters. The next lemma shows that, in fact, $\Psi_{\sigma}(\chi_1,\chi_2)$ accounts for all such unramified twistings simultaneously. 
\begin{lem}\label{unram}
	We have
	$$
	\Psi_{\sigma}(\chi_1,\chi_2) = \bigoplus_{\psi} \Psi_{\sigma}(\chi_1,\chi_2)^{\operatorname{Gal}(L/K) = \psi \chi_2/\chi_1}
	$$
	with the direct sum running over all unramified characters $\psi:G_K\rightarrow \overline{\mathbb{F}}_p^\times$. 
\end{lem}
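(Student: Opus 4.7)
The plan is to exhibit $\Psi_{\sigma}(\chi_1,\chi_2)$ as a $\operatorname{Gal}(L/K)$-stable subspace of $H^1(G_L,\overline{\mathbb{F}}_p)$ and then decompose it into isotypic components. Since $[L:K]=(p^f-1)^2$ is coprime to $p$, Maschke's theorem guarantees that every finite-dimensional $\overline{\mathbb{F}}_p$-representation of $\operatorname{Gal}(L/K)$ is semisimple. Consequently, it suffices to (a) verify that $\Psi_{\sigma}(\chi_1,\chi_2)$ is $\operatorname{Gal}(L/K)$-stable, and (b) show that each character of $\operatorname{Gal}(L/K)$ appearing in the resulting isotypic decomposition has the form $\psi\cdot \chi_2/\chi_1$ for some unramified character $\psi\colon G_K\to \overline{\mathbb{F}}_p^\times$.

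For (a), it is enough to show each summand $\Psi_{\sigma,J,x}$ is $\operatorname{Gal}(L/K)$-stable. This summand is the image under $\Psi$ of the $l[[u]]\otimes_{\mathbb{F}_p}\overline{\mathbb{F}}_p$-submodule $M_{\sigma,J,x}$ of $u^{-ep/(p-1)}vl[[v]]\otimes_{\mathbb{F}_p}\overline{\mathbb{F}}_p$ generated by $(v^{\Omega_{\tau,\sigma,J,x}-(p^f-1)x_\tau})_\tau$. By Remark~\ref{omegawhy}, this generator transforms under $\operatorname{Gal}(L/K)$ via the character $\omega_{\sigma,J,x}$ (the factor $(u^{-x_\tau})_\tau$ is $\operatorname{Gal}(L/K)$-fixed, since $\omega(g)^{p^f-1}=1$ for every $g$). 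The coefficient ring $l[[u]]\otimes_{\mathbb{F}_p}\overline{\mathbb{F}}_p$ is itself $\operatorname{Gal}(L/K)$-stable, so $M_{\sigma,J,x}$ is stable; Corollary~\ref{Abrcor}(2) then gives stability of its image $\Psi_{\sigma,J,x}$.

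For (b), view $M_{\sigma,J,x}$ as a $\operatorname{Gal}(L/K)$-representation, isomorphic to the twist $\omega_{\sigma,J,x}\otimes (l[[u]]\otimes_{\mathbb{F}_p}\overline{\mathbb{F}}_p)$. The action on $l[[u]]\otimes_{\mathbb{F}_p}\overline{\mathbb{F}}_p$ factors through the unramified quotient $\operatorname{Gal}(L/K)\twoheadrightarrow \operatorname{Gal}(l/k)$, because the inertia subgroup of $\operatorname{Gal}(L/K)$ acts trivially on $l$ and fixes $u$. Hence every character of $\operatorname{Gal}(L/K)$ appearing in $M_{\sigma,J,x}$ -- and therefore in $\Psi_{\sigma,J,x}$ -- has the form $\omega_{\sigma,J,x}\cdot \psi'$ for some unramified character $\psi'$ of $\operatorname{Gal}(L/K)$. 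By Remark~\ref{omegawhy} once more, $\omega_{\sigma,J,x}=\chi_2/\chi_1|_{I_K}$, so $\omega_{\sigma,J,x}$ and $\chi_2/\chi_1$ differ by an unramified character of $G_K$, yielding the required form $\psi\cdot \chi_2/\chi_1$.

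The main subtlety lies in careful bookkeeping of the $\operatorname{Gal}(L/K)$-action across the decomposition $l[[v]]\otimes_{\mathbb{F}_p}\overline{\mathbb{F}}_p$ indexed by embeddings $\tau$, and checking that the characters of $G_K$ produced this way really do realise all characters of $\operatorname{Gal}(L/K)$ that agree with $\chi_2/\chi_1$ on $I_K$; but all the relevant formulas are already recorded in Remark~\ref{omegawhy} and Corollary~\ref{Abrcor}, and no additional input from $p$-adic Hodge theory is required.
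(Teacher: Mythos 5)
Your overall strategy (prime-to-$p$ order of $\operatorname{Gal}(L/K)$, semisimplicity, stability of each $\Psi_{\sigma,J,x}$ via the equivariance of $\Psi$, and identification of the occurring characters via Remark~\ref{omegawhy}) is exactly the argument used in the paper for all but one case, and in those cases your reasoning is correct. However, there is a genuine gap: you assert that $M_{\sigma,J,x}$, the $l[[u]]\otimes_{\mathbb{F}_p}\overline{\mathbb{F}}_p$-module generated by $(v^{\Omega_{\tau,\sigma,J,x}-(p^f-1)x_\tau})_\tau$, is contained in $u^{-ep/(p-1)}vl[[v]]\otimes_{\mathbb{F}_p}\overline{\mathbb{F}}_p$, and then invoke Corollary~\ref{Abrcor}(2). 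But Definition~\ref{subspacedefn} only guarantees $\Omega_{\tau,\sigma,J,x}-(p^f-1)x_\tau \geq -ep(p^f-1)/(p-1)$, and equality occurs in the degenerate case $J=\operatorname{Hom}_{\mathbb{F}_p}(k,\overline{\mathbb{F}}_p)$, $x_\tau=e-1$ and $a_\tau-b_\tau=p-1$ for every $\tau$. There $M_{\sigma,J,x}=u^{-ep/(p-1)}l[[u]]\otimes_{\mathbb{F}_p}\overline{\mathbb{F}}_p$, whose "constant" part $u^{-ep/(p-1)}\cdot l$ lies outside $u^{-ep/(p-1)}vl[[v]]$. On that part $\Psi$ is defined through the artificial extension of $\overline{E}^{\operatorname{AH}}$ sending $x\in l$ to $v^{\psi(x)}$ for an arbitrarily chosen surjection $\psi:l\to\mathbb{Z}/p\mathbb{Z}$, and Corollary~\ref{Abrcor}(2) explicitly does \emph{not} cover it; that map is not $\operatorname{Gal}(L/K)$-equivariant, so neither stability of $\Psi_{\sigma,J,x}$ nor the description of its characters follows from your argument in this case.

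The paper closes this gap by a separate direct computation (Lemma~\ref{lem-degen}): in the degenerate case $\Psi_{\sigma,J,x}$ is identified with the $\overline{\mathbb{F}}_p$-span of the image of $K_{\operatorname{ur}}^\times\otimes_{\mathbb{Z}}\mathbb{F}_p = H^1(G_{K_{\operatorname{ur}}},\mu_p(L))$ under restriction to $G_L$, from which one reads off both the $\operatorname{Gal}(L/K)$-stability and the fact that the eigencharacters are exactly the unramified twists of the cyclotomic character. To complete your proof you would need to add this (or an equivalent) treatment of the degenerate case; everything else you wrote goes through as stated.
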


Since $\operatorname{Gal}(L/K)$ has order prime to $p$, this lemma follows from the observation that $\Psi_{\sigma}(\chi_1,\chi_2) = \Psi_{\sigma}(\chi_1,\chi_2)^{I(L/K) = \chi_2 / \chi_1}$ for $I(L/K) \subset \operatorname{Gal}(L/K)$ the inertia subgroup. In almost all cases this follows from the equivariance of $\Psi$ from Corollary~\ref{Abrcor}. In one degenerate case this argument does not work. Since we explain this issue in more detail in Section~\ref{sec-refine}, we give a complete proof of Lemma~\ref{unram} in that section.

\section{Refined descriptions of \texorpdfstring{$\Psi_{\sigma}(\chi_1,\chi_2)$}{psi-sigma-chi1-chi2}}\label{sec-refine}

In this section we give more concrete descriptions of $\Psi_{\sigma}(\chi_1,\chi_2)$ by producing smaller subspaces of $u^{-ep/(p-1)}l[[v]] \otimes_{\mathbb{F}_p}\overline{\mathbb{F}}_p$ whose image under $\Psi$ computes $\Psi_{\sigma}(\chi_1,\chi_2)^{\operatorname{Gal}(L/K) = \chi^{-1}}$. In particular, these calculations will give upper bounds on the dimension of $\Psi_{\sigma}(\chi_1,\chi_2)$.

We begin by dealing with the most degenerate situation.
\begin{lem}\label{lem-degen}
	Suppose $\sigma = \sigma_{a,b}$ with $a_\tau -b_\tau =p-1$ for every $\tau$. If $J = \operatorname{Hom}_{\mathbb{F}_p}(k,\overline{\mathbb{F}}_p)$ and $x_\tau = e-1$ for each $\tau$, then 
	$$
	\Psi_{\sigma,J,x}^{\operatorname{Gal}(L/K) = \chi^{-1}} = H^1(G_K,\overline{\mathbb{F}}_p(\chi))
	$$
	if $\chi$ is an unramified twist of the cyclotomic character $\chi_{\operatorname{cyc}}$.
\end{lem}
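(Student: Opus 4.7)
The plan is to carry out a direct Artin--Hasse calculation in three steps.

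First, I would substitute the specific parameters into the formula for $\Omega_{\tau,\sigma,J,x}$ from Definition~\ref{subspacedefn}. With $a_\tau - b_\tau = p-1$, $J = \operatorname{Hom}_{\mathbb{F}_p}(k,\overline{\mathbb{F}}_p)$, and $x_\tau = e-1$ for every $\tau$, each summand reduces to $p^i(-(p+e-1))$, giving $\Omega_{\tau,\sigma,J,x} = -(p+e-1)(p^f-1)/(p-1)$ independently of $\tau$. Hence
\[
\Omega_{\tau,\sigma,J,x} - (p^f-1)x_\tau = -\frac{ep(p^f-1)}{p-1},
\]
a multiple of $p^f - 1$. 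Since $v^{p^f - 1} = u$, the tuple $(v^{\Omega_{\tau,\sigma,J,x} - (p^f-1)x_\tau})_\tau$ collapses to the scalar $u^{-ep/(p-1)}$, and thus $\Psi_{\sigma,J,x}$ is the image under $\Psi$ of $u^{-ep/(p-1)}\,l[[u]] \otimes_{\mathbb{F}_p} \overline{\mathbb{F}}_p$.

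Second, I would unpack $\Psi$ on this input. Writing $z(v) - 1 = u^{e/(p-1)}\,g(v)$ with $g(v) \in l[[v]]$ and $g(0) \in l^\times$ (which can be arranged in choosing $z$), the characteristic $p$ binomial identity gives $z(v)^p - 1 \equiv u^{ep/(p-1)}\,g(v)^p \pmod{p}$. Multiplication by $z(v)^p - 1$ therefore sends $u^{-ep/(p-1)}\,l[[u]]$ onto $l[[u]] \cdot g(v)^p \subset l[[v]]$, and $\Psi_{\sigma,J,x}$ is identified with the image of $l[[u]] \otimes_{\mathbb{F}_p} \overline{\mathbb{F}}_p$ under $f \mapsto \overline{E}^{\operatorname{AH}}(f \cdot g(v)^p)|_{v = \pi^{1/(p^f-1)}}$, viewed in $L^\times \otimes_{\mathbb{Z}} \overline{\mathbb{F}}_p = H^1(G_L,\overline{\mathbb{F}}_p)$ via the Kummer map and the $\epsilon_1$-identification.

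Third, I would establish the equality by a dimension count. By inflation--restriction (using $(p, |\operatorname{Gal}(L/K)|) = 1$), the $\chi^{-1}$-eigenspace of $H^1(G_L, \overline{\mathbb{F}}_p)$ identifies with $H^1(G_K,\overline{\mathbb{F}}_p(\chi))$, whose $\overline{\mathbb{F}}_p$-dimension is computed by the local Euler characteristic formula and Tate duality. Splitting $l[[u]] = l \oplus u\,l[[u]]$: on $u\,l[[u]]$ the product $f \cdot g(v)^p$ lies in $v\,l[[v]]$, so $\overline{E}^{\operatorname{AH}}$ acts as the genuine Artin--Hasse exponential, and Vostokov's isomorphism combined with Corollary~\ref{Abrcor}(2) shows the image surjects onto the $\chi^{-1}$-eigenspace of the principal-unit quotient $(1+\mathfrak{m}_L)/(1+\mathfrak{m}_L)^p \otimes \overline{\mathbb{F}}_p$, contributing the expected $ef$ dimensions. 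On the constant summand $l$, the extension $\overline{E}^{\operatorname{AH}}(c) = v^{\psi(c)}$ produces the uniformiser class $\pi_L^{\psi(c\,g(0)^p)}$, which supplies the additional dimension precisely when $\chi = \chi_{\operatorname{cyc}}$ (so that the $H^2$-contribution on the dimension side is nonzero).

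The main obstacle is the treatment of the constant-$l$ summand, since Corollary~\ref{Abrcor}(2) does not directly apply to it: the subspace $u^{-ep/(p-1)} \cdot l$ has strictly negative $v$-valuation and is therefore not contained in $u^{-ep/(p-1)}\,v\,l[[v]]$. One must compute the $\operatorname{Gal}(L/K)$-action on the uniformiser class $\pi_L^{\psi(\cdot)}$ by hand, carefully reconciling the $\operatorname{Gal}(l/k)$-action on the $l$-coefficients with the $\omega$-twist on $\pi_L$, in order to confirm that the resulting class lands in the $\chi^{-1}$-eigenspace when $\chi$ is an unramified twist of the cyclotomic character. This same hand-calculation is what also underlies the proof of Lemma~\ref{unram} promised in the same section.
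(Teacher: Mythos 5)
Your first two steps agree with the paper's: the exponent collapses to $\Omega_{\tau,\sigma,J,x}-(p^f-1)x_\tau=-ep(p^f-1)/(p-1)$ uniformly in $\tau$, so $\Psi_{\sigma,J,x}$ is the ($\overline{\mathbb{F}}_p$-linear extension of the) image of $l[[u]]$ under $\Psi_0$, up to the unit factor $g(v)^p=u^{-ep/(p-1)}(z(v)^p-1)$. The gap is in your third step. The sentence ``Vostokov's isomorphism combined with Corollary~\ref{Abrcor}(2) shows the image surjects onto the $\chi^{-1}$-eigenspace of the principal-unit quotient'' asserts exactly what has to be proved, and neither cited tool can deliver it: the equivariance in Corollary~\ref{Abrcor}(2) only shows the image is a $\operatorname{Gal}(L/K)$-subrepresentation, i.e.\ it gives the containment of the image \emph{in} a sum of eigenspaces, not surjectivity onto any of them; and the fact that $E^{\operatorname{AH}}$ is an isomorphism onto $1+vW(l)[[v]]$ before evaluation says nothing about the image after evaluation at $v=\pi^{1/(p^f-1)}$, which kills a huge subspace. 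You are restricting $\overline{E}^{\operatorname{AH}}$ to the proper subspace $u\,l[[u]]\cdot g(v)^p$ of $v\,l[[v]]$, and some argument is needed for why this particular subspace still fills out the whole relevant eigenspace of $(1+\mathfrak{m}_L)\otimes_{\mathbb{Z}_p}\overline{\mathbb{F}}_p$. Since your dimension count is nothing but this surjectivity statement plus the (automatic) reverse inclusion, the proposal as written does not close.

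The paper's proof supplies the missing idea and thereby avoids both the Euler-characteristic computation and the separate hand-calculation of the Galois action on the uniformiser class. Because the series in $l[[u]]$ involve only integral powers of $u$, which evaluates to $\pi\in K$, and coefficients in $W(l)\subset\mathcal{O}_{K_{\operatorname{ur}}}$ for $K_{\operatorname{ur}}$ the degree-$(p^f-1)$ unramified extension of $K$ (so $L=K_{\operatorname{ur}}(\pi^{1/(p^f-1)})$ and the constants map to powers of $\pi^{1/(p^f-1)}$, whose class in $L^\times\otimes\mathbb{F}_p$ is a prime-to-$p$ multiple of that of $\pi$), the image of $l[[u]]$ under $\Psi_0$ is precisely the image of $K_{\operatorname{ur}}^\times\otimes_{\mathbb{Z}}\mathbb{F}_p=H^1(G_{K_{\operatorname{ur}}},\mu_p(L))$ under restriction to $G_L$. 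The $\chi^{-1}$-eigenspace of that image is all of $H^1(G_K,\overline{\mathbb{F}}_p(\chi))$ exactly when $\chi$ is an unramified twist of $\chi_{\operatorname{cyc}}$, by Kummer theory over $K_{\operatorname{ur}}$ together with prime-to-$p$ descent for $L/K_{\operatorname{ur}}$ and $K_{\operatorname{ur}}/K$. Your observations that the constant summand is responsible for the uniformiser class (hence for the extra dimension when $\chi=\chi_{\operatorname{cyc}}$) and that Corollary~\ref{Abrcor}(2) does not cover that summand are both correct and correctly flagged; but note also that your count as stated would miss the $H^0$-contribution in the edge case where $\chi=1$ happens to be an unramified twist of $\chi_{\operatorname{cyc}}$, where $\mu_p(L)$ adds a dimension to the relevant eigenspace of the principal units.
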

\begin{proof}
	The assumptions on $\sigma, J$ and $x$ imply $\Omega_{\tau,\sigma,J,x} - (p^f-1)x_\tau = -ep(p^f-1)/(p-1)$. Therefore, $\Psi_{\sigma,J,x}$ equals the $\overline{\mathbb{F}}_p$-linear extension of the image of $l[[u]]$ under the surjection $\Psi_0: l[[v]] \rightarrow L^\times \otimes_{\mathbb{Z}} \mathbb{F}_p = H^1(G_L,\mu_p(L))$. If $K_{\operatorname{ur}}$ is the unramified extension of $K$ of degree $p^f-1$, then it follows that $\Psi_{\sigma,J,x}$ equals the $\overline{\mathbb{F}}_p$-linear extension of the image of $K_{\operatorname{ur}}^\times \otimes_{\mathbb{Z}} \mathbb{F}_p = H^1(G_{K_{\operatorname{ur}}},\mu_p(L))$ under the restriction map. Thus, $\Psi_{\sigma,J,x}^{\operatorname{Gal}(L/K) = \chi^{-1}} = H^1(G_K, \overline{\mathbb{F}}_p(\chi))$ if $\chi$ is an unramified twist of the cyclotomic character (and is zero otherwise).
\end{proof}
\begin{cor}\label{cor-degen}
	Suppose $\overline{r} = \psi \otimes \left( \begin{smallmatrix}
		\chi_{\operatorname{cyc}} & c \\ 0 & 1
	\end{smallmatrix}\right)$ for an unramified character $\psi$. Then $\sigma = \sigma_{a,0} \in W^{\operatorname{exp}}(\overline{r})$ if $a_\tau = p-1$ for every $\tau$.
\end{cor}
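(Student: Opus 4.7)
The strategy is to verify both conditions in the definition of $W^{\operatorname{exp}}(\overline{r})$ using the specific choice $J = \operatorname{Hom}_{\mathbb{F}_p}(k,\overline{\mathbb{F}}_p)$ and $x_\tau = e-1$ for every $\tau$, and then invoke Lemma~\ref{lem-degen} to make the extension-class condition vacuous. Since this is exactly the ``most degenerate'' pair $(J,x)$ that Lemma~\ref{lem-degen} was designed for, essentially all the work has already been done.

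For condition (1), that $\sigma \in W^{\operatorname{exp}}(\overline{r}^{\operatorname{ss}})$: since $\psi$ is unramified, $\chi_1|_{I_K} = \chi_{\operatorname{cyc}}|_{I_K}$ and $\chi_2|_{I_K} = 1$. Plugging our $(J,x)$ into the formulas in Definition~\ref{exp}, together with $a_\tau = p-1$ and $b_\tau = 0$, the $(1,1)$-entry becomes $\prod_\tau \omega_\tau^{(p-1)+1+(e-1)} = \prod_\tau \omega_\tau^{p+e-1}$, and the $(2,2)$-entry becomes $\prod_\tau \omega_\tau^{0+(e-1)-(e-1)} = 1$. From $\omega_\tau^p = \omega_{\tau\circ \varphi}$ and the fact that $\tau \mapsto \tau\circ\varphi$ permutes the embeddings, one has $\prod_\tau \omega_\tau^p = \prod_\tau \omega_\tau$, so $\prod_\tau \omega_\tau^{p-1} = 1$. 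Hence the $(1,1)$-entry simplifies to $\prod_\tau \omega_\tau^e = \chi_{\operatorname{cyc}}|_{I_K}$, matching $\chi_1|_{I_K}$, and the $(2,2)$-entry matches $\chi_2|_{I_K}$.

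For condition (2): we have $\chi := \chi_1/\chi_2 = \chi_{\operatorname{cyc}}$, which is (trivially) an unramified twist of $\chi_{\operatorname{cyc}}$. Lemma~\ref{lem-degen} applied to our chosen $(J,x)$ then yields
$$
\Psi_{\sigma,J,x}^{\operatorname{Gal}(L/K) = \chi^{-1}} = H^1(G_K,\overline{\mathbb{F}}_p(\chi)).
$$
Since $\Psi_{\sigma,J,x} \subset \Psi_\sigma(\chi_1,\chi_2)$, it follows that $\Psi_\sigma(\chi_1,\chi_2)^{\operatorname{Gal}(L/K)=\chi^{-1}}$ already equals the full cohomology group, so the extension class $c$ lies in $\Psi_\sigma(\chi_1,\chi_2)$ automatically, with no constraint whatsoever on its ramification.

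There is essentially no obstacle: the entire content of the corollary is the combinatorial verification that the choice $(J,x) = (\operatorname{Hom}_{\mathbb{F}_p}(k,\overline{\mathbb{F}}_p), (e-1,\dots,e-1))$ is admissible as a witness for $\sigma_{a,0} \in W^{\operatorname{exp}}(\overline{r}^{\operatorname{ss}})$ under the standing hypothesis $a_\tau = p-1$, which reduces to the identity $\prod_\tau \omega_\tau^{p-1} = 1$ and the standard description $\chi_{\operatorname{cyc}}|_{I_K} = \prod_\tau \omega_\tau^e$.
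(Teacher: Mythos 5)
Your proposal is correct and follows essentially the same route as the paper: both proofs take the degenerate pair $J = \operatorname{Hom}_{\mathbb{F}_p}(k,\overline{\mathbb{F}}_p)$, $x_\tau = e-1$, check it witnesses $\sigma \in W^{\operatorname{exp}}(\overline{r}^{\operatorname{ss}})$ (your direct computation via $\prod_\tau \omega_\tau^{p-1}=1$ and $\chi_{\operatorname{cyc}}|_{I_K}=\prod_\tau\omega_\tau^e$ is exactly the identity the paper extracts from Lemma~\ref{lem-degen}), and then invoke Lemma~\ref{lem-degen} to conclude that $\Psi_\sigma(\chi_1,\chi_2)$ contains all of $H^1(G_K,\overline{\mathbb{F}}_p(\chi_{\operatorname{cyc}}))$, so the class $c$ is unconstrained.
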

\begin{proof}
	The previous lemma implies that $\chi_{\operatorname{cyc}}^{-1}|_{I_K} = \omega_{\sigma,J,x}$ for $J = \operatorname{Hom}_{\mathbb{F}_p}(k,\overline{\mathbb{F}}_p)$ and $x_\tau =e-1$ for every $\tau$. From this we deduce that $\sigma \in W^{\operatorname{exp}}(\overline{r}^{\operatorname{ss}})$. The previous lemma also shows that $\Psi_{\sigma}(\chi_1,\chi_2)$ contains $H^1(G_K,\overline{\mathbb{F}}_p(\psi \chi_{\operatorname{cyc}}))$ and so $c \in \Psi_{\sigma}(\chi_1,\chi_2)$.
\end{proof}

For the rest of the section we assume we are not in the case just mentioned, i.e. we assume that if $J = \operatorname{Hom}_{\mathbb{F}_p}(k,\overline{\mathbb{F}}_p)$ then $x_\tau \neq e-1$ for at least one $\tau$. The essential reason for distinguishing between these two cases is because, as mentioned in Definition~\ref{subspacedefn}, the inequality
$$
\Omega_{\tau,\sigma,J,x} \geq -(e+p-1)\frac{p^f-1}{p-1}
$$
is strict except when $J = \operatorname{Hom}_{\mathbb{F}}(k,\overline{\mathbb{F}}_p)$ and $x_\tau = e-1$ for every $\tau$. We will see in the following proofs (see also the proof of Theorem~\ref{galois}) that the strictness of this inequality plays a crucial role in certain arguments.

\begin{proof}[Proof of Lemma~\ref{unram}]
We've seen it suffices to show that $\Psi_{\sigma}(\chi_1,\chi_2) = \Psi_{\sigma}(\chi_1,\chi_2)^{I(L/K) = \chi_2 / \chi_1}$ for the inertia subgroup $I(L/K) \subset \operatorname{Gal}(L/K)$. By the above we may assume 
$$
\Omega_{\tau,\sigma,J,x} > -(e+p-1)(p^f-1)/(p-1)
$$
so that $(v^{\Omega_{\tau,\sigma,J,x} - (p^f-1)x_\tau})_\tau l[[u]]\otimes_{\FFp}\overline{\mathbb{F}}_p \subset u^{-ep/(p-1)}vl[[v]] \otimes_{\mathbb{F}_p} \overline{\mathbb{F}}_p$. By Corollary~\ref{Abrcor}, $\Psi$ is $\operatorname{Gal}(L/K)$-equivariant when restricted to $u^{-ep/(p-1)}vl[[v]] \otimes_{\mathbb{F}_p} \overline{\mathbb{F}}_p$, it is enough to show that $I(L/K)$ acts on $(v^{\Omega_{\tau,\sigma,J,x} - (p^f-1)x_\tau})_\tau l[[u]]\otimes_{\FFp}\overline{\mathbb{F}}_p)$ as $\omega_{\sigma,J,x}$. This follows from \eqref{rem-omega}.
\end{proof}

\begin{prop}\label{maximalprop}
	Assume that if $J = \operatorname{Hom}_{\mathbb{F}_p}(k,\overline{\mathbb{F}}_p)$, then $x_\tau \neq e-1$ for at least one $\tau$. Also fix $\tau_0:k \rightarrow \overline{\mathbb{F}}_p$ and an unramified character $\psi:G_K \rightarrow \overline{\mathbb{F}}_p^\times$.
	
	Define $U_{J,x,\psi} \subset k[[u]] \otimes_{\mathbb{F}_p}\overline{\mathbb{F}}_p$ as the $k\otimes_{\mathbb{F}_p} \overline{\mathbb{F}}_p$-subspace generated by those $y =(y_\tau)_\tau \in k[[u]] \otimes_{\mathbb{F}_p} \overline{\mathbb{F}}_p$ for which 
	\begin{itemize}
		\item $y_\tau \in \overline{\mathbb{F}}_p[u]$ has non-zero terms concentrated in degrees $[0,x_\tau]$ if $\tau \circ \varphi^{-1} \in J$, and
		\item non-zero terms concentrated in degree $[0,x_\tau-1]$ if $\tau \circ \varphi^{-1} \not\in J$.
		\item If $\psi = 1$ and $-\Omega_{\tau,\sigma,J,x} \in (p^f-1)\mathbb{Z}_{\geq 0}$, then $y_{\tau_0}$ may additionally have have a non-zero term in degree $u^{-\Omega_{\tau_0,\sigma,J,x}/(p^f-1)}$.
	\end{itemize}
	Then
	$$
	\Psi_{\sigma,J,x}^{\operatorname{Gal}(L/K) = \psi \omega_{\sigma,J,x}} = \Psi((v^{\Omega_{\tau,\sigma,J,x}-(p^f-1)x_\tau})_\tau \lambda_\psi U_{J,x,\psi})
	$$
	for any generator $\lambda_\psi$ of $(l\otimes_{\mathbb{F}_p} \overline{\mathbb{F}}_p)^{\operatorname{Gal}(L/K) = \psi}$.
\end{prop}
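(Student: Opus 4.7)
The plan is to write $\Omega_\tau = \Omega_{\tau,\sigma,J,x}$, $\omega_\sigma = \omega_{\sigma,J,x}$, $M = (v^{\Omega_\tau - (p^f-1)x_\tau})_\tau l[[u]] \otimes_{\mathbb{F}_p} \overline{\mathbb{F}}_p$, and to proceed in three steps. The non-degeneracy hypothesis on $(J,x)$ guarantees $M \subset u^{-ep/(p-1)}vl[[v]] \otimes \overline{\mathbb{F}}_p$, so by Corollary~\ref{Abrcor}(2) the map $\Psi|_M$ is $\operatorname{Gal}(L/K)$-equivariant. Since $|\operatorname{Gal}(L/K)| = (p^f-1)^2$ is coprime to $p$, isotypic decomposition is exact and commutes with $\Psi$; thus the problem reduces to identifying $M^{\operatorname{Gal}(L/K) = \psi \omega_\sigma}$ and determining precisely which of its elements are killed by $\Psi$.

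To pin down the isotypic component I use \eqref{G(L/K)-action}: the inertia subgroup $I(L/K)$ acts on $v$ through $\omega$ and trivially on $l$, whereas $\operatorname{Gal}(l/k)$ acts on $l$ and fixes $u = v^{p^f-1}$. The identity $\omega_\tau^{\Omega_\tau} = \omega_\sigma$, which is independent of $\tau$ as noted in Remark~\ref{omegawhy}, makes $(v^{\Omega_\tau - (p^f-1)x_\tau})_\tau$ an inertial eigenvector with character $\omega_\sigma$, while $l[[u]] \otimes \overline{\mathbb{F}}_p$ is inertia-invariant. Since the $\psi$-eigenspace of $l \otimes \overline{\mathbb{F}}_p$ under $\operatorname{Gal}(l/k)$ is free of rank one over $k \otimes \overline{\mathbb{F}}_p$, generated by any such $\lambda_\psi$, and $u$ is $\operatorname{Gal}(l/k)$-invariant, these combine to give
$$
M^{\operatorname{Gal}(L/K) = \psi \omega_\sigma} = (v^{\Omega_\tau - (p^f-1)x_\tau})_\tau \lambda_\psi \bigl( k[[u]] \otimes_{\mathbb{F}_p} \overline{\mathbb{F}}_p \bigr).
$$

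It then remains to show $\Psi$ has the same image on this subspace as on $(v^{\Omega_\tau - (p^f-1)x_\tau})_\tau \lambda_\psi U_{J,x,\psi}$. The inclusion $\supseteq$ is immediate. For $\subseteq$, I would kill every monomial $\lambda_\psi e_\tau u^n$ with $n$ exceeding the cutoff prescribed by $U_{J,x,\psi}$ by subtracting an Artin--Schreier coboundary $\varphi(H) - H$, appealing to Corollary~\ref{Abrcor}(3). The governing numerical identity is the recursion
$$
p \, \Omega_{\tau \circ \varphi, \sigma, J, x} = \Omega_{\tau,\sigma,J,x} + (p^f - 1)\bigl( (a_\tau - b_\tau + 1)(-1)^{\tau \in J} + (e - 1 - 2x_\tau) \bigr),
$$
which is immediate from the definition of $\Omega$. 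This identity controls both the degree shift produced by $\varphi$ (and hence which monomials can cancel which) and the sign $(-1)^{\tau \in J}$ that distinguishes the two cutoffs $x_\tau$ versus $x_\tau - 1$. The extra top-degree slot at $u^{-\Omega_{\tau_0}/(p^f-1)}$ permitted in the degenerate case $\psi = 1$ is the manifestation of the extension of $\overline{E}^{\operatorname{AH}}$ from $vl[[v]]$ to $l[[v]]$: the descent terminates at a constant term of $l[[v]]$ which $\Psi_0$ sends to a nontrivial power of $v$ that is not an Artin--Schreier coboundary.

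The main obstacle will be the bookkeeping in this last descent: verifying inductively that the auxiliary $H$ at each stage remains in an ambient space to which Corollary~\ref{Abrcor}(3) applies, and that the process terminates exactly at the claimed cutoffs. The asymmetry between the two clauses in the definition of $U_{J,x,\psi}$ — together with the degenerate boundary case at $\psi = 1$ — are the subtlest aspects of this verification.
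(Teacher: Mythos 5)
Your first two steps (reducing to the $\psi\omega_{\sigma,J,x}$-isotypic component of the source via the $\operatorname{Gal}(L/K)$-equivariance of $\Psi$ on $u^{-ep/(p-1)}vl[[v]]\otimes_{\mathbb{F}_p}\overline{\mathbb{F}}_p$ and the prime-to-$p$ order of the group, then identifying that component as $(v^{\Omega_{\tau,\sigma,J,x}-(p^f-1)x_\tau})_\tau\lambda_\psi\,(k[[u]]\otimes_{\mathbb{F}_p}\overline{\mathbb{F}}_p)$) are exactly the paper's setup, and your governing identity $p\Omega_{\tau\circ\varphi,\sigma,J,x}-\Omega_{\tau,\sigma,J,x}=(p^f-1)\alpha_\tau$ with $\alpha_\tau=(-1)^{\tau\in J}(a_\tau-b_\tau+1)+(e-1-2x_\tau)$ is the right one. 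The gap is in your third step. Subtracting the coboundary $\varphi(H)-H$ with $H=(v^{\Omega_\tau})_\tau\lambda_\psi e_\kappa u^N$ replaces the monomial $e_\kappa u^N$ by $\mu_\psi e_{\kappa\circ\varphi^{-1}}u^{pN+\alpha_{\kappa\circ\varphi^{-1}}}$ where $\mu_\psi=\varphi(\lambda_\psi)\lambda_\psi^{-1}$; this is a \emph{forward} $\varphi$-iteration, not a descent that strictly lowers degree, and there is no a priori reason it terminates in $U_{J,x,\psi}$. One must establish a trichotomy for the orbit $y_n=\varphi(y_{n-1})(u^{\alpha_\tau})_\tau\mu_\psi$: either some $y_n$ lands in $(u^{-x_\tau})_\tau U_{J,x,\psi}$; or the $u$-adic valuations $N_i$ of $y_{if}$, which satisfy $N_{i+1}=p^fN_i+\Omega_{\tau_0,\sigma,J,x}$, tend to infinity, so the telescoping sums converge and the class is zero; or the $N_i$ are constant, forcing $-\Omega_{\tau_0,\sigma,J,x}=(p^f-1)N_0$ and $y_f=\xi y_0$ with $\xi=\mu_\psi\varphi(\mu_\psi)\cdots\varphi^{f-1}(\mu_\psi)\in\overline{\mathbb{F}}_p^\times$. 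In this last, periodic case nothing is "killed one monomial at a time": one needs the separate identity expressing $(v^{\Omega_\tau-(p^f-1)x_\tau})_\tau\lambda_\psi y$ as $\varphi\bigl(\tfrac{y^{(f)}}{\xi-1}\bigr)-\tfrac{y^{(f)}}{\xi-1}$, which is only available when $\xi\neq 1$; and $\xi=1$ happens precisely when $\psi=1$, which is exactly how the extra basis slot in degree $u^{-\Omega_{\tau_0,\sigma,J,x}/(p^f-1)}$ arises. Your heuristic for that slot (a constant term seen by the artificial extension of $\overline{E}^{\operatorname{AH}}$) identifies the right monomial but is not an argument that it cannot be removed.

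A second unresolved point, which you flag but do not close, is that every application of Corollary~\ref{Abrcor}(3) requires the relevant $\varphi(H)-H$ to lie in $u^{-ep/(p-1)}vl[[v]]\otimes_{\mathbb{F}_p}\overline{\mathbb{F}}_p$. This is where the non-degeneracy hypothesis enters quantitatively: it gives the strict inequality $\Omega_{\tau,\sigma,J,x}-(p^f-1)x_\tau>-ep(p^f-1)/(p-1)$, and one must check inductively that every $y_n$ along the orbit (not just the first and last) stays in $(u^{-x_\tau})_\tau k[[u]]\otimes_{\mathbb{F}_p}\overline{\mathbb{F}}_p$ so that $(v^{\Omega_{\tau,\sigma,J,x}})_\tau\lambda_\psi(y_n-y_0)$ remains in the admissible range. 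Without the trichotomy and this verification, the plan does not yet constitute a proof, although it is aimed in the same direction as the paper's argument.
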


Notice that to make sense of $\lambda_\psi$ we use that $l \otimes_{k} \overline{\mathbb{F}}_p$ is the regular $\overline{\mathbb{F}}_p$-representation of $\operatorname{Gal}(l/k)$. This implies $(l\otimes_{\mathbb{F}_p} \overline{\mathbb{F}}_p)^{\operatorname{Gal}(L/K) = \psi}$ is one dimensional over $\overline{\mathbb{F}}_p$ for any unramified character $\psi$. Thus, the $\lambda_\psi$ above above exists and is uniquely determined up to scaling.

\begin{proof}
	Recall that $\omega_{\sigma,J,x}$ is the character via which $\operatorname{Gal}(L/K)$ acts on $(v^{\Omega_{\tau,\sigma,J,x}})$ (see the proof of Lemma~\ref{unram}). Therefore, any element of $\Psi_{\sigma,J,x}^{\operatorname{Gal}(L/K) = \psi \omega_{\sigma,J,x}}$  can be written as $$
	Y: = \Psi((v^{\Omega_{\tau,\sigma,J,x} -(p^f-1)x_\tau})_\tau \lambda_\psi y)
	$$
	for some $y \in  k[[u]] \otimes_{\mathbb{F}_p} \overline{\mathbb{F}}_p$. We have to show that $Y =\Psi((v^{\Omega_{\tau,\sigma,J,x}})_\tau \lambda_\psi z)$ for some $z \in (u^{-x_\tau})_\tau U_{J,x,\psi}$. The construction of $z$ will be based upon the following recursion. Define
	$$
	y_0 = (u^{-x_\tau})_\tau y, \qquad y_i = \varphi(y_{i-1})(u^{\alpha_\tau})_\tau \mu_\psi
	$$
	where $\mu_\psi = \varphi(\lambda_\psi)\lambda_\psi^{-1} \in (k \otimes_{\mathbb{F}_p} \overline{\mathbb{F}}_p)^\times$ and 
	$
	\alpha_\tau = (p\Omega_{\tau \circ \varphi,\sigma, J,x} - \Omega_{\tau,\sigma,J,x})/(p^f-1).
	$

	By linearity, we can assume that $y = e_\kappa y'$ where $y' \in \overline{\mathbb{F}}[[u]]$, $\kappa:k \rightarrow \overline{\mathbb{F}}_p$ is some embedding, and $e_\kappa \in k \otimes_{\mathbb{F}_p} \overline{\mathbb{F}}_p \cong \prod_{\tau} \overline{\mathbb{F}}_p$ is the $\kappa$-th idempotent. Since $\varphi(e_{\kappa}) = e_{\kappa \circ \varphi^{-1}}$, we can then write $y_{n} = e_{\kappa \circ \varphi^{-n}} y'_n$ with $y'_n \in \overline{\mathbb{F}}_p((u))$.
	
	\begin{clm}\label{clm1}
		One of the following must occur:
		\begin{itemize}
			\item There exists an $n$ with $y_n \in (u^{-x_\tau})_\tau U_{J,x,\psi}$ and $y_i \in (u^{-x_\tau})_\tau k[[u]] \otimes_{\mathbb{F}_p} \overline{\mathbb{F}}_p$ for all $i\leq n$;
			\item $y_n \in (u^{-x_\tau})_\tau k[[u]] \otimes_{\mathbb{F}_p} \overline{\mathbb{F}}_p$ for all $n$ and $y_n \rightarrow 0$ as $n\rightarrow \infty$;
			\item $y_n \in (u^{-x_\tau})_\tau k[[u]] \otimes_{\mathbb{F}_p} \overline{\mathbb{F}}_p$ for all $n$ and $y_f = \xi y_0$ for some $\xi \in  \overline{\mathbb{F}}_p^\times \setminus \lbrace 1 \rbrace$.
		\end{itemize}
	\end{clm}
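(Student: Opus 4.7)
The plan is to reduce the claim to a one-variable dynamical system on the $u$-exponent and to read off the three cases from its behaviour. By $\overline{\mathbb{F}}_p$-linearity of both the recursion and of each of the three alternatives, it suffices to treat $y' = u^m$ a single monomial; then $y'_n$ remains a monomial of exponent $p^n(m - x_\kappa) + \beta_n$, where $\beta_n := \sum_{i=1}^n p^{n-i}\alpha_{\kappa \circ \varphi^{-i}}$ and the leading scalar is $\mu_1 \cdots \mu_n$ with $\mu_i := \mu_{\psi, \kappa \circ \varphi^{-i}}$. A direct telescoping using the definition of $\alpha_\tau$ collapses it to $\alpha_\tau = (a_\tau - b_\tau + 1)(-1)^{\tau \in J} + (e - 1 - 2 x_\tau)$ and yields $\beta_f = \Omega_{\kappa, \sigma, J, x}$, so the $f$-step iteration on the exponent becomes $j \mapsto p^f j + \Omega_{\kappa, \sigma, J, x}$ with unique fixed point $j^*_\kappa := -\Omega_{\kappa, \sigma, J, x}/(p^f - 1)$.

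The trichotomy now falls out from comparing $m - x_\kappa$ with $j^*_\kappa$. If $m - x_\kappa > j^*_\kappa$, the exponents march off to $+\infty$ so $y_n \to 0$, giving case 2. If $m - x_\kappa = j^*_\kappa$ (which forces $j^*_\kappa \in \mathbb{Z}$), then $y'_f = \xi\, y'_0$ with $\xi = \mu_1 \cdots \mu_f$; a second telescoping argument, using that $\lambda_\psi$ transforms under $\operatorname{Gal}(L/K)$ by $\psi$, identifies $\xi = \psi(\operatorname{Frob})$ for $\operatorname{Frob}$ a Frobenius lift in $\operatorname{Gal}(L/K)$. Hence $\xi \neq 1$ exactly when $\psi \neq 1$, placing us in case 3. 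The residual $\xi = 1$ scenario (which forces $\psi = 1$ and $-\Omega_\kappa \in (p^f-1)\mathbb{Z}$) is designed to be absorbed into case 1 by the exceptional third bullet of $U_{J,x,\psi}$, whose extra allowed degree $-\Omega_{\tau_0}/(p^f-1)$ is tailored precisely for this configuration. Finally, if $m - x_\kappa < j^*_\kappa$, the exponents drop to $-\infty$ and there is a first $n$ at which $y_n$ exits $(u^{-x_\tau})_\tau k[[u]] \otimes_{\mathbb{F}_p} \overline{\mathbb{F}}_p$.

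For this last scenario I would verify $y_{n-1} \in (u^{-x_\tau})_\tau U_{J,x,\psi}$ by setting $\tau := \kappa \circ \varphi^{-(n-1)}$, $\sigma := \kappa \circ \varphi^{-n}$, and letting $d$ be the polynomial degree of the $\tau$-component of $y_{n-1}$ after stripping the $u^{-x_\tau}$ twist. Being inside the ambient ring gives $d \geq 0$, while the exit condition $p d + \alpha_\sigma + x_\sigma - p x_\tau < 0$ at step $n$ rearranges, via the simplified formula for $\alpha_\sigma$, into
$$
p d \leq p x_\tau + x_\sigma - (e - 1) - (a_\sigma - b_\sigma + 1)(-1)^{\sigma \in J} - 1,
$$
from which a short case analysis on whether $\sigma \in J$ yields $d \leq x_\tau$ or $d \leq x_\tau - 1$ respectively. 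Since $\tau \circ \varphi^{-1} = \sigma$, these are exactly the piecewise bounds defining $U_{J,x,\psi}$. The main obstacle is this final arithmetic step: one needs to convert the fractional exit inequality into the correct integer bound, carefully handling the $J$-dependent sign $(-1)^{\sigma \in J}$ in $\alpha_\sigma$ so that the floor operation lands in the right piece $x_\tau$ versus $x_\tau - 1$.
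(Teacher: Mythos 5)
Your computational ingredients essentially coincide with the paper's: the telescoping identity $\beta_f=\Omega_{\kappa,\sigma,J,x}$ is the paper's recursion $N_i=p^fN_{i-1}+\Omega_{\tau_0,\sigma,J,x}$ in disguise, the identification of $\xi$ as a product of the $\mu$'s with $\xi=1$ forcing $\psi=1$ and $-\Omega_{\tau_0}\in(p^f-1)\mathbb{Z}_{\ge0}$ (hence absorption into $U_{J,x,\psi}$ via its third bullet) is exactly the paper's argument, and your exit arithmetic converting $pd+\alpha_\sigma+x_\sigma-px_\tau<0$ into $d\le x_\tau$ or $d\le x_\tau-1$ according to whether $\sigma\in J$ is correct and matches the paper's computation of $\alpha_\tau$. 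However, the logical organization has a genuine gap. You drive the trichotomy by the position of the initial exponent relative to the fixed point $j^*_\kappa$ of the $f$-step map, but that only controls the subsequence of exponents at steps $0,f,2f,\dots$; it says nothing about the intermediate steps, where the increments $\alpha_{\kappa\circ\varphi^{-i}}$ can be quite negative. Nothing in your argument prevents a monomial with $m-x_\kappa>j^*_\kappa$ (or $=j^*_\kappa$) from dropping below $-x_{\kappa\circ\varphi^{-n}}$ at some intermediate $n$ before the exponents grow; in that situation your branch asserts the second (resp.\ third) bullet of the claim, which is then false, since those bullets require $y_n\in(u^{-x_\tau})_\tau k[[u]]\otimes_{\mathbb{F}_p}\overline{\mathbb{F}}_p$ for \emph{every} $n$; the correct conclusion is the first bullet. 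The repair is precisely the step the paper proves first and that you deploy only in the "$<j^*_\kappa$" branch: the implication "$y_{n-1}$ in the lattice but not in $(u^{-x_\tau})_\tau U_{J,x,\psi}$ implies $y_n$ in the lattice", applied at every step, reduces the claim to the dichotomy "either some $y_n$ lands in $U_{J,x,\psi}$ (first bullet, with all earlier terms in the lattice) or every $y_n$ stays in the lattice and avoids $U_{J,x,\psi}$", and only in the second case does the fixed-point analysis of the valuations take over.

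A secondary issue is the opening reduction "by linearity to $y'=u^m$": none of the three alternatives is stable under sums (a series whose lowest term sits at the fixed point while its higher terms decay satisfies neither the second nor the third bullet verbatim), so the claim for general $y$ does not follow formally from the monomial case. The paper reduces only to a single idempotent component and then tracks $u$-adic valuations of genuine power series. If you work monomial by monomial you should say explicitly that you are proving what is needed for Proposition~\ref{maximalprop}, where the conclusion "we may replace $y$ by some $z\in(u^{-x_\tau})_\tau U_{J,x,\psi}$" is additive, rather than the claim verbatim.
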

	\begin{proof}[Proof of claim]
		Since $y_{n-1} = e_{\kappa \circ \varphi^{-n+1}} y_{n-1}'$ we have
		$$
		y_n = e_{\kappa \circ \varphi^{-n}} u^{\alpha_{\kappa \circ \varphi^{-n}}} y_{n-1}'(u^p)\mu_\psi
		$$
		where $y'_{n-1}(u^p)$ denotes the power series obtained from $y'_{n-1}$ by substituting $u$ with $u^p$.	A quick calculation also shows that  
		$$	
		\alpha_\tau = (-1)^{\tau \in J}(a_\tau -b_\tau+1) + (e-1 -2x_\tau)
		$$ 
		for all $\tau$. Now suppose $y_{n-1} \in (u^{-x_\tau})_\tau k[[u]] \otimes_{\mathbb{F}_p} \overline{\mathbb{F}}_p$ but not in $(u^{-x_\tau})_\tau U_{J,,x,\psi}$. Then either $y_{n-1}' \in u\overline{\mathbb{F}}_p[[u]]$ or $y_{n-1}' \in \overline{\mathbb{F}}_p$ and $\kappa \circ \varphi^{-n} \not\in J$. In either of these cases one has $y_n \in (u^{-x_\tau})_\tau k[[u]] \otimes_{\mathbb{F}_p} \overline{\mathbb{F}}_p$. Therefore, either the first case holds or $y_n \in (u^{-x_\tau})_\tau k[[u]] \otimes_{\mathbb{F}_p} \overline{\mathbb{F}}_p$ for all $n$.
		
		From  now on we assume $y_n \not\in (u^{-x_\tau})_\tau U_{J,x,\psi}$ for any $n$. Replacing the sequence $(y_n)_n$ by a shift $(y_{n+i})_n$ with $i$ chosen so that $\kappa \circ \varphi^{-i} =\tau_0$ for $\tau_0$ the embedding fixed at the beginning of the section allows us to assume also that $\kappa = \tau_0$. Note that this new sequence still has every term contained in $(u^{-x_\tau})_\tau k[[u]] \otimes_{\mathbb{F}_p} \overline{\mathbb{F}}_p$. 
		
		Let $N_i$ denote the $u$-adic valuation of $y_{if} \in \overline{\mathbb{F}}_p((u))$. Then $y_{if + j}$ has $u$-adic valuation
		$$
		p^j N_i + p^{j-1}\alpha_{\tau_0 \circ \varphi^{-1}} + p^{j-2} \alpha_{\tau_0 \circ \varphi^{-2}} + \ldots + p \alpha_{\tau_0 \circ \varphi^{-(j-1)}} + \alpha_{\tau_0 \circ \varphi^{-j}}.
		$$
		In particular, we see that
		$$
		N_i := p^f N_{i-1} + \Omega_{\tau_0,\sigma,J,x}.
		$$
		If the sequence of integers $N_i$ is strictly increasing then we must have that $y_n \rightarrow 0$. Therefore we assume the sequence $N_i$ is not strictly increasing.
		
		Notice that $N_{i+1}= p^fN_i + \Omega_{\tau_0,\sigma,J,x} < N_i$ if and only if $(p^f-1)N_i < -\Omega_{\tau_0,\sigma,J,x}$. In particular, $N_{i+1}<N_i$ implies $(p^f-1)N_{i+1} = (p^f-1)p^f N_i + (p^f-1)\Omega_{\tau_0,\sigma,J,x}< -\Omega_{\tau_0,\sigma,J,x}$ and so $N_{i+2} < N_{i+1}$. Since we know $y_n \in (u^{-x_\tau})_\tau k[[u]] \otimes_{\mathbb{F}_p} \overline{\mathbb{F}}_p$ for all $n$ we know the $N_i$ are bounded from below. Therefore, the sequence $N_i$ must be constant and so $-\Omega_{\tau_0,\sigma,J,x} = (p^f-1)N_0$ and
		$$
		y_f = y_0 \left( \mu_\psi \varphi(\mu_\psi) \ldots \varphi^{f-1}(\mu_\psi) \right).
		$$
		Notice that $\xi:= \ldots \varphi^{f-1}(\mu_\psi)$ is $\varphi$-invariant and so contained in $\overline{\mathbb{F}}_p^\times$. To finish the proof we have to show $\xi \neq 1$. For this note that the identity $\varphi(\lambda_\psi)\lambda_\psi^{-1} = \mu_\psi$ implies $\varphi^f(\lambda_\psi) = \xi \lambda_\psi$. Therefore $\xi =1$ implies $\lambda_\psi \in k\otimes_{\mathbb{F}_p} \overline{\mathbb{F}}_p$. Since $\operatorname{Gal}(L/K)$ acts on $\lambda_\psi$ via $\psi$ it would follow that $\psi =1$. However, if $\psi =1$ then, because $-\Omega_{\tau_0,\sigma J,x} = (p^f-1)N_0$ with $N_0 \geq 0$, we would have $y_0 \in (u^{-x_\tau})_\tau U_{J,x,\psi}$ contrary to our previous assumption. This finishes the proof of the claim. 
	\end{proof}
	
	To finish the proof, recall the observation made before the statement of the claim: that our assumptions on $J$ and $x_\tau$ ensure that $\Omega_{\tau,\sigma,J,x} >(-e+p-1)(p^f-1)/(p-1)$ for all $\tau$. Therefore
	$$
	\Omega_{\tau,\sigma,J,x} -(p^f-1)x_\tau > -ep(p^f-1)/(p-1)
	$$ 
	for all $\tau$ and so
	\begin{equation}\label{imply}
		y_n \in (u^{-x_\tau})_\tau k[[u]] \otimes_{\FFp} \overline{\mathbb{F}}_p \Rightarrow (v^{\Omega_{\tau,\sigma,J,x}})_\tau y_n \in u^{-ep/(p-1)}vk[[v]]\otimes_{\FFp}\overline{\mathbb{F}}_p.
	\end{equation}
	Now suppose the sequence $(y_i)_i$ is as in the first bullet point of the claim. Set $y^{(n)} = (v^{\Omega_{\tau,\sigma,J,x}})_\tau \lambda_\psi \sum_{i=0}^{n-1} y_i$ and observe that
	$$
	\begin{aligned}
		\varphi(y^{(n)}) - y^{(n)} &= (v^{p\Omega_{\tau \circ \varphi,J,x}})_\tau \varphi(\lambda_\psi) \sum_{i=0}^{n-1} \varphi(y_i) - (v^{\Omega_{\tau,\sigma,J,x}})_\tau \lambda_\psi \sum_{i=0}^{n-1}y_i \\
		&= (v^{\Omega_{\tau,\sigma,J,x}})_\tau \lambda_\psi \left( \sum_{i=0}^{n-1} y_{i+1} - \sum_{i=0}^{n-1} y_i \right) \\
		&= (v^{\Omega_{\tau,\sigma,J,x}})_\tau \lambda_\psi (y_{n} - y_0).
	\end{aligned}
	$$
	By \eqref{imply} we have $\varphi(y^{(n)}) - y^{(n)} \in u^{-ep/(p-1)}vl[[v]] \otimes_{\mathbb{F}_p} \overline{\mathbb{F}}_p$ and so Corollary~\ref{Abrcor} shows we can take $z = y_n$. If the sequence $(y_i)_i$ is as in the second bullet point, then the $y^{(i)}$ converge to $y^{(\infty)}$ and we have 
	$$
	(v^{\Omega_{\tau,\sigma,J,x} - (p^f-1)x_\tau})_\tau \lambda_\psi y + \varphi(y^{(\infty)}) - y^{(\infty)}= 0.
	$$
	Since  $(v^{\Omega_{\tau,\sigma,J,x} - (p^f-1)x_\tau})_\tau \lambda_\psi y \in u^{-ep/(p-1)}vl[[v]] \otimes_{\mathbb{F}_p} \overline{\mathbb{F}}_p$, the same is true of $\varphi(y^{(\infty)}) - y^{(\infty)}$. Therefore, Corollary~\ref{Abrcor} shows we can take $z=0$. Finally, if $y_f = \xi y_0$ is as in the third bullet point then, since $\xi \in \overline{\mathbb{F}}_p^\times \setminus \{1\}$, 
	$$
	(v^{\Omega_{\tau,\sigma,J,x} - (p^f-1)x_\tau})_\tau \lambda_\psi y = \varphi(\frac{y^{(f)}}{\xi-1}) - \frac{y^{(f)}}{\xi-1}.
	$$
	Again, $\varphi(\frac{y^{(f)}}{\xi-1}) - \frac{y^{(f)}}{\xi-1} \in  u^{-ep/(p-1)}vl[[v]] \otimes_{\mathbb{F}_p} \overline{\mathbb{F}}_p$, so we can again take $z = 0$.
\end{proof}

\begin{prop}\label{proposition-maximal}
	For pairs $(J,x)$ and $(J',x')$ with $J,J' \subset \operatorname{Hom}_{\mathbb{F}_p}(k,\overline{\mathbb{F}}_p)$ and $x_\tau,x_\tau' \in [0,e-1]$, write
	$$
	(J,x) \leq (J',x') \Leftrightarrow \Omega_{\tau,\sigma,J,x} - \Omega_{\tau,\sigma,J',x'} \in 2(p^f-1)\mathbb{Z}_{\geq 0} \text{ for all $\tau$.}
	$$
	Then
	\begin{enumerate}
		\item $(J,x) \leq (J',x')$ implies $\Psi_{\sigma,J,x} \subset \Psi_{\sigma,J',x'}$.
		\item Fix a Serre weight $\sigma =\sigma_{a,b}$ and $\overline{r} \sim \left( \begin{smallmatrix}
			\chi_1 & * \\ 0 & \chi_2 
		\end{smallmatrix}\right)$. Then the set of pairs $(J,x)$ for which
		$$
		\overline{r}^{\operatorname{ss}}|_{I_K} = \begin{pmatrix} 
			\prod_{\tau \in J} \omega_{\tau}^{a_\tau + 1 + x_{\tau}} \prod_{\tau \not\in J} \omega_{\tau}^{b_\tau + x_{\tau}} & 0 \\ 
			0 & \prod_{\tau \not\in J} \omega_{\tau}^{a_\tau + e - x_{\tau}} \prod_{\tau \in J} \omega_{\tau}^{b_\tau + e-1 - x_{\tau}}
		\end{pmatrix}
		$$
		contains a unique maximal element $(J_{\operatorname{max}},x_{\operatorname{max}})$.
	\end{enumerate}
\end{prop}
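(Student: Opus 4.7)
For part (1), I plan to use Corollary~\ref{Abrcor}(3), which asserts $\Psi(\varphi(H) - H) = 0$ for suitable $H$, to modify elements modulo the kernel of $\Psi$. The exponent difference between the generators of $S_{J,x}$ and $S_{J',x'}$ in the $\tau$-slot is $(p^f - 1)(2N_\tau + x'_\tau - x_\tau)$ with $N_\tau \geq 0$ by hypothesis; this is a multiple of $p^f - 1$ but may be negative, so a set-theoretic inclusion is not automatic. The approach is to telescope $\sum_{i=1}^{f} (\varphi(H^{(i)}) - H^{(i)})$ with $H^{(i)} = v^{p^{i-1}b} e_{\sigma \circ \varphi^{-(i-1)}}$ to obtain the identity $\Psi(v^b(u^b - 1) e_\sigma) = 0$ for $b$ sufficiently positive. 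This encodes a periodicity of $\Psi(v^b u^k e_\sigma)$ in $k$, which lets one replace problematic negative powers of $u$ by equivalent high positive powers. I would then reduce to single-step changes in the partial order by transitivity and construct the correcting $H$ explicitly in each case, in the style of the telescoping used in the proof of Proposition~\ref{maximalprop}.

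For part (2), the key algebraic observation is that
\[ (a_\tau - b_\tau + 1)(-1)^{\tau \in J} + (e - 1 - 2 x_\tau) = a_\tau + b_\tau + e - 2 n_\tau, \]
valid whether or not $\tau \in J$, where $n_\tau := a_\tau + 1 + x_\tau$ (if $\tau \in J$) or $n_\tau := b_\tau + x_\tau$ (otherwise). Consequently $\Omega_{\tau, \sigma, J, x} = C_\tau - 2 \Omega_{\tau, n}$ where $C_\tau$ depends only on $\sigma$ and $e$. The partial order therefore translates to $(J, x) \leq (J', x')$ if and only if $\Omega_{\tau, n^{J',x'}} \leq \Omega_{\tau, n^{J,x}}$ for all $\tau$; the divisibility by $p^f - 1$ is automatic because admissible $(n_\tau)$'s all realize the fixed character $\chi_1|_{I_K}$, placing $\Omega_{\tau, n}$ in a single residue class modulo $p^f - 1$. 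Finding $(J_{\max}, x_{\max})$ thus reduces to finding the unique tuple $(n_\tau) \in \prod_\tau ([b_\tau, b_\tau + e - 1] \cup [a_\tau + 1, a_\tau + e])$ that simultaneously minimizes $\Omega_{\tau, n}$ for all $\tau$. I would construct this minimum explicitly by a greedy algorithm starting from $n_\tau = b_\tau$ and introducing carries as needed to enforce the congruence, with uniqueness following from the rigidity of $p$-adic expansions under range constraints.

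The main obstacle will be in part (1), in particular the \emph{equivalence} case where $N_\tau = 0$ for all $\tau$ but $(J, x) \neq (J', x')$; this occurs precisely when some $n_\tau$ lies in the overlap interval $[a_\tau + 1, b_\tau + e - 1]$ and the pairs differ as to whether $\tau \in J$. Here the partial order is reflexive both ways, so part (1) asserts that $\Psi_{\sigma, J, x}$ and $\Psi_{\sigma, J', x'}$ coincide despite their generating sets differing by a factor of $u^{\pm(a_\tau - b_\tau + 1)}$ in the $\tau$-slot. Resolving this genuinely requires the periodicity identities produced by iterating Corollary~\ref{Abrcor}(3), so it is the key technical point to get right.
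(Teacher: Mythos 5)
Your reduction in part (2) is correct and is essentially the paper's own reindexing: setting $s_\tau := n_\tau - b_\tau$, the paper likewise writes $\Omega_{\tau,\sigma,J,x}-\Omega_{\tau,\sigma,J',x'} = 2\Lambda_\tau$ with $\Lambda_\tau = \sum_i p^i(s'_{\tau\circ\varphi^i}-s_{\tau\circ\varphi^i})$, so that maximality becomes simultaneous minimality of $\Omega_{\tau,n}$ over tuples $n_\tau \in [b_\tau,b_\tau+e-1]\cup[a_\tau+1,a_\tau+e]$ subject to the congruence constraint. But at exactly this point the paper does \emph{not} run a combinatorial argument: it invokes \cite[5.3.3]{gls15} (via \cite[3.1.1, 3.1.2 and 5.1.2]{gls15}) to produce the simultaneously minimizing tuple $s^{\operatorname{min}}$. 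The existence of a \emph{single} tuple minimizing $\Omega_{\tau,n}$ for \emph{every} $\tau$ at once is the entire content of part (2) --- a priori, minimizing at one $\tau$ could conflict with minimizing at another --- and your proposal defers precisely this to an unexecuted ``greedy algorithm \dots\ with uniqueness following from the rigidity of $p$-adic expansions''. That is a restatement of what must be proved, not a proof; you would need either to carry the algorithm out (essentially reproving \cite[5.3.1]{gls15}) or to cite it.

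For part (1) your route genuinely diverges from the paper's, and the paper's is much lighter. The paper shows the inclusion already holds between the generating modules: it proves
\[
\Omega_{\tau,\sigma,J,x}-(p^f-1)x_\tau - \bigl(\Omega_{\tau,\sigma,J',x'}-(p^f-1)x'_\tau\bigr) \;=\; \Lambda_\tau + p\Lambda_{\tau\circ\varphi}+(p^f-1)(s_\tau-x_\tau+x'_\tau-s'_\tau)\;\in\;(p^f-1)\mathbb{Z}_{\geq 0}
\]
by a two-case analysis on whether $\Lambda_{\tau\circ\varphi}>0$ or $\Lambda_{\tau\circ\varphi}=0$ (the only sign-problematic case, $\tau\notin J$ and $\tau\in J'$, is excluded in the latter case using $s_\tau\ge s'_\tau$ and absorbed by $p\Lambda_{\tau\circ\varphi}\ge p(p^f-1)$ in the former), whence $(v^{\Omega_{\tau,\sigma,J,x}-(p^f-1)x_\tau})_\tau l[[u]]\otimes_{\mathbb{F}_p}\overline{\mathbb{F}}_p \subset (v^{\Omega_{\tau,\sigma,J',x'}-(p^f-1)x'_\tau})_\tau l[[u]]\otimes_{\mathbb{F}_p}\overline{\mathbb{F}}_p$ set-theoretically and no appeal to the kernel of $\Psi$ is needed. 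Your telescoping identity $\Psi(v^b(u^b-1)e_\sigma)=0$ is correct (each $\varphi(H^{(i)})-H^{(i)}$ does land in the domain of Corollary~\ref{Abrcor}(3) for $b\ge 1$), but it is a single relation, and ``reduce to single-step changes \dots\ and construct the correcting $H$ explicitly in each case'' is exactly the part that is missing; pursuing it would amount to re-running the recursion $y_n$ from the proof of Proposition~\ref{maximalprop} to do a job that a short inequality does. You correctly isolate the overlap case ($N_\tau=0$ for all $\tau$ with $J\neq J'$) as the delicate point, but you neither verify that the direct containment fails there nor supply the correcting elements. As written, the proposal establishes neither part.
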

\begin{proof}
	Define $s_\tau = x_\tau$ for $\tau \not\in J$ and $s_\tau = a_\tau -b_\tau + 1 + x_\tau$ for $\tau \in J$. Similarly, we make sense of $s'_\tau$. Observe that $\Omega_{\tau,\sigma,J,x} - \Omega_{\tau,\sigma,J',x'} = 2 \Lambda_\tau$, where 
	$$
	\Lambda_\tau = \sum_{i=0}^{f-1} p^i (s'_{\tau\circ \varphi^i} - s_{\tau \circ \varphi^i}).
	$$
	Thus, $(J,x) \leq (J',x')$ if and only if $\Lambda_\tau \in(p^f-1)\mathbb{Z}_{\geq 0}$ for all $\tau$. We also have $\Lambda_\tau + (p^f-1)(s_\tau' - s_\tau) = p\Lambda_{\tau \circ \varphi}$ and so
	\begin{equation}\label{value+ve}
		\Omega_{\tau,\sigma,J,x} - \Omega_{\tau,\sigma,J',x'} -(p^f-1)(x_\tau -x_\tau') = \Lambda_{\tau} + p \Lambda_{\tau \circ \varphi} + (p^f-1)(s_\tau - x_\tau + x'_\tau - s_\tau').	
	\end{equation}
	If $\Lambda_{\tau\circ \varphi} > 0$, then \eqref{value+ve} is $\geq 0$, since $x_\tau' - s_\tau' \in [-p,0]$ and $s_\tau - x_\tau \ge 0$. If $\Lambda_{\tau\circ \varphi} =0$, then $\Lambda_\tau = (p^f-1)(s_\tau - s_\tau')$ and so $s_\tau \geq s'_\tau$. Therefore, $\tau \in J'$ implies $\tau \in J$ and so
	$$
	(s_\tau - x_\tau + x'_\tau - s_\tau') = \begin{cases}
		0 & \text{if $\tau \in J'$;} \\
		0 & \text{if $\tau \not\in J,J'$;} \\
		a_\tau - b_\tau + 1 &\text{if $\tau \not\in J',\tau \in J$.} 
	\end{cases}
	$$
	We conclude again that \eqref{value+ve} is $\geq 0$. Since $(J,x) \leq (J',x')$, it follows that in every case 
	$$
	(v^{\Omega_{\tau,\sigma,J,x}-(p^f-1)x_\tau})_\tau l[[u]] \otimes_{\mathbb{F}_p} \overline{\mathbb{F}}_p \subset (v^{\Omega_{\tau,\sigma,J',x'}-(p^f-1)x'_\tau})_\tau l[[u]] \otimes_{\mathbb{F}_p} \overline{\mathbb{F}}_p.
	$$
	This shows $\Psi_{\sigma,J,x} \subset \Psi_{\sigma,J',x'}$.
	
	Part (2) has already been proved in \cite[5.3.3]{gls15}, but in a different setting. To translate their statement into ours, suppose $(J,x)$ is such that 
	$$
	\overline{r}^{\operatorname{ss}}|_{I_K} \cong \begin{pmatrix} 
		\prod_{\tau \in J} \omega_{\tau}^{a_\tau + 1 + x_{\tau}} \prod_{\tau \not\in J} \omega_{\tau}^{b_\tau + x_{\tau}} & 0 \\ 
		0 & \prod_{\tau \not\in J} \omega_{\tau}^{a_\tau + e - x_{\tau}} \prod_{\tau \in J} \omega_{\tau}^{b_\tau + e-1 - x_{\tau}}
	\end{pmatrix}
	$$
	(if no such $(J,x)$ exists then there is nothing to prove). Let $s_\tau$ be defined as in the first sentence of the proof. To accommodate the notation in loc. cit. fix an embedding $\tau_0$ and set $s_i = s_{\tau_0 \circ \varphi^i}$. Notice that $s_i \in [0,e-1] \cup [r_i, r_i +e-1]$ for $r_i := a_{\tau_0 \circ \varphi^i} - b_{\tau_0\circ \varphi^i} + 1$, and so we can apply \cite[5.3.3]{gls15} with $\mathfrak{N} = \mathfrak{M}(s_0,\ldots,s_{f-1};1)$. Combining \cite[3.1.1, 3.1.2 and 5.1.2]{gls15} shows that \cite[5.3.3]{gls15} applied in this way produces $s^{\operatorname{min}}_0,\ldots,s^{\operatorname{min}}_{f-1}$ so that, for any $j \in [0,f-1]$,
	\begin{equation}\label{smax}
		\sum_{i=0}^{f-1} p^i(s^{\operatorname{min}}_{i+j}- s_{i+j}) \in (p^f-1)\mathbb{Z}_{\geq 0}	
	\end{equation}
	(here the indices of $s_i$ and $s_i^{\operatorname{min}}$ are viewed modulo $f$). It follows from the proposition that $s_i^{\operatorname{min}} \in [0,e-1] \cup [r_i, r_i +e-1]$ for each $i$. Define $J_{\operatorname{max}} \subset \operatorname{Hom}_{\mathbb{F}_p}(k,\overline{\mathbb{F}}_p)$ by asserting that $\tau_0 \circ \varphi^i \in J_{\operatorname{max}}$ if and only if $s_i^{\operatorname{min}} \not\in [0,e-1]$, and define $x_{\operatorname{max},\tau_0 \circ \varphi^i} := s_i^{\operatorname{min}}$ if $\tau_0 \circ \varphi^i \not\in J_{\operatorname{max}}$  and $x_{\operatorname{max},\tau_0 \circ \varphi^i} := s_i^{\operatorname{min}} - r_i$ otherwise. Then \eqref{smax} implies that
	$$
	\Omega_{\tau,\sigma,J,x} - \Omega_{\tau,\sigma,J_{\operatorname{max},x_{\operatorname{max}}}} \in (p^f-1)\mathbb{Z}_{\geq 0}
	$$
	for all $\tau$. In other words $(J,x) \leq (J_{\operatorname{max}},x_{\operatorname{max}})$. Since the $s_i^{\operatorname{min}}$ are independent of the chosen pair $(J,x)$ it follows that $(J_{\operatorname{max}},x_{\operatorname{max}})$ is the desired maximal pair.
\end{proof}
\begin{rem}
	The proof of \cite[5.3.3]{gls15} (or more precisely, the equivalent statement of \cite[5.3.1]{gls15}) gives an algorithm to compute the $s_i^{\operatorname{min}}$, and therefore the maximal pair $(J_{\operatorname{max}},x_{\operatorname{max}})$, explicitly.
\end{rem}
As a consequence of the previous two propositions we immediately deduce the following corollary.
\begin{cor}\label{cor-maxpairs}
	If $(J_{\operatorname{max}},x_{\operatorname{max}})$ is the maximal pair from part (2) of Proposition~\ref{proposition-maximal} and $\chi = \chi_1/\chi_2$, then
	$$
	\Psi_{\sigma}(\chi_1,\chi_2) = \Psi_{\sigma,J_{\operatorname{max}},x_{\operatorname{max}}}
	$$
	and
	$$
	\begin{aligned}
		\operatorname{dim}_{\overline{\mathbb{F}}_p} \Psi_{\sigma}(\chi_1,\chi_2)^{\operatorname{Gal}(L/K) = \chi^{-1}} &\leq \nu + \sum_{\tau} \begin{cases}
			x_{\operatorname{max},\tau} + 1 & \text{if $\tau \circ \varphi^{-1} \in J_{\operatorname{max}}$} \\
			x_{\operatorname{max},\tau} & \text{if $\tau \circ \varphi^{-1} \not\in J_{\operatorname{max}}$}
		\end{cases} \\
		&= \nu + \operatorname{Card}(J_{\operatorname{max}})  + \sum_{\tau}x_{\operatorname{max},\tau}, 
	\end{aligned}
	$$
	where $\nu =0$ unless $\chi =1$ and $-\Omega_{\tau, J_{\operatorname{max}},x_{\operatorname{max}}} \in (p^f-1)\mathbb{Z}_{\geq 0}$ for one (equivalently all) $\tau$, in which case $\nu = 1$.
\end{cor}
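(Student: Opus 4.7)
The plan is to derive both parts directly from the two preceding propositions, so the substance is already in place. For part~(1), I would unfold Definition~\ref{subspacedefn}: $\Psi_{\sigma}(\chi_1,\chi_2) = \sum_{(J,x)} \Psi_{\sigma,J,x}$ where the sum ranges over exactly the pairs $(J,x)$ considered in Proposition~\ref{proposition-maximal}(2). That proposition provides a unique maximal pair $(J_{\operatorname{max}},x_{\operatorname{max}})$ in this set, and part~(1) of the same proposition gives $\Psi_{\sigma,J,x} \subset \Psi_{\sigma,J_{\operatorname{max}},x_{\operatorname{max}}}$ for every such $(J,x)$. Hence the sum collapses to the maximal term, which is the first assertion of the corollary.

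For part~(2), the plan is to extract the $\chi^{-1}$-isotypic component using Proposition~\ref{maximalprop}. Write $\chi^{-1} = \psi\,\omega_{\sigma,J_{\operatorname{max}},x_{\operatorname{max}}}$ where $\psi$ is the unique unramified character making this identity hold; this is possible because $\chi^{-1}|_{I_K} = \omega_{\sigma,J_{\operatorname{max}},x_{\operatorname{max}}}|_{I_K}$ by Remark~\ref{omegawhy}. Applying Proposition~\ref{maximalprop} with this $\psi$ then identifies $\Psi_{\sigma,J_{\operatorname{max}},x_{\operatorname{max}}}^{\operatorname{Gal}(L/K) = \chi^{-1}}$ with the image under $\Psi$ of $(v^{\Omega_{\tau,\sigma,J_{\operatorname{max}},x_{\operatorname{max}}} - (p^f-1)x_{\operatorname{max},\tau}})_\tau\, \lambda_\psi\, U_{J_{\operatorname{max}},x_{\operatorname{max}},\psi}$. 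Since the image has $\overline{\mathbb{F}}_p$-dimension at most $\dim_{\overline{\mathbb{F}}_p} U_{J_{\operatorname{max}},x_{\operatorname{max}},\psi}$, it suffices to count the generators listed in Proposition~\ref{maximalprop}: each $\tau$ contributes $x_{\operatorname{max},\tau}+1$ basis vectors if $\tau \circ \varphi^{-1} \in J_{\operatorname{max}}$ and $x_{\operatorname{max},\tau}$ otherwise, plus the additional dimension $1$ from the degenerate third bullet when its hypotheses are met. The sum of the ``$+1$'' contributions equals $\operatorname{Card}(J_{\operatorname{max}})$ because $\varphi$ permutes $\operatorname{Hom}_{\mathbb{F}_p}(k,\overline{\mathbb{F}}_p)$.

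The main bookkeeping point I expect to require care — and the only step beyond directly quoting the previous propositions — is matching the trigger for $\nu = 1$ in Proposition~\ref{maximalprop} (namely $\psi = 1$ together with $-\Omega_{\tau_0,\sigma,J_{\operatorname{max}},x_{\operatorname{max}}} \in (p^f-1)\mathbb{Z}_{\geq 0}$) with the condition stated in the corollary ($\chi = 1$ with the same divisibility for one, equivalently all, $\tau$). For this I would use that $\omega$ has image in $k^\times$, so that $\omega_\tau^{p^f-1}=1$ as a character of the full group $G_K$; hence the divisibility $-\Omega_{\tau,\sigma,J_{\operatorname{max}},x_{\operatorname{max}}} \in (p^f-1)\mathbb{Z}_{\geq 0}$ is independent of $\tau$ (divisibility by $p^f-1$ because the $\Omega_\tau$ agree modulo $p^f-1$, with non-negativity surviving the shift $\tau \mapsto \tau \circ \varphi$ thanks to the minimality built into $(J_{\operatorname{max}},x_{\operatorname{max}})$ by Proposition~\ref{proposition-maximal}(2)), and is equivalent to $\omega_{\sigma,J_{\operatorname{max}},x_{\operatorname{max}}} = 1$ as a $G_K$-character. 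Once this equivalence is in place, the identity $\chi^{-1} = \psi\,\omega_{\sigma,J_{\operatorname{max}},x_{\operatorname{max}}}$ collapses to $\psi = \chi^{-1}$, so $\psi = 1 \Leftrightarrow \chi = 1$, and the two formulations of the $\nu$-hypothesis coincide.
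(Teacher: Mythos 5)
Your proposal is correct and is exactly the deduction the paper intends: the paper gives no separate proof, stating only that the corollary follows immediately from Propositions~\ref{maximalprop} and \ref{proposition-maximal}, and your argument (collapsing the sum to the maximal pair via Proposition~\ref{proposition-maximal}, then counting the generators of $U_{J_{\operatorname{max}},x_{\operatorname{max}},\psi}$ from Proposition~\ref{maximalprop} and matching the trigger for the extra dimension via $\psi=1\Leftrightarrow\chi=1$ under the divisibility hypothesis) fills in precisely those steps. The bookkeeping, including the identification $\sum_\tau \mathbbm{1}_{\tau\circ\varphi^{-1}\in J_{\operatorname{max}}}=\operatorname{Card}(J_{\operatorname{max}})$ and the use of $\omega_\tau^{p^f-1}=1$ on $G_K$, is sound.
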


\section{Breuil--Kisin modules}

We do not assume that $p>2$ in this section. Let $\mathbb{F}$ be a finite extension of $\mathbb{F}_p$, sufficiently large that there is an embedding $k \hookrightarrow \mathbb{F}$. A Breuil--Kisin module $\mathfrak{M}$ over $\mathbb{F}$ is a finite free $\mathfrak{S}_{\mathbb{F}} := k[[u]] \otimes_{\mathbb{F}_p} \mathbb{F}$-module equipped with a homomorphism
$$
\varphi: \mathfrak{M} \otimes_{\mathfrak{S}_{\mathbb{F}},\varphi} \mathfrak{S}_{\mathbb{F}} \rightarrow \mathfrak{M}
$$
with cokernel killed by a power of $E(u) \in \mathfrak{S}_{\mathbb{F}}$.\footnote{One can define Breuil--Kisin modules over more general $\mathbb{Z}_p$-algebras but in this paper we only need to consider $p$-torsion Breuil--Kisin modules over $\mathbb{F}$.} Here $\varphi$ on $\mathfrak{S}_{\mathbb{F}}$ denotes $\mathbb{F}$-linear extension of the the $p$-th power map on $k[[u]]$ and $E(u)$ denotes the (reduction modulo $p$ of the) minimal polynomial over $W(k)$ of $\pi$. Thus $E(u) = u^e$.

\begin{prop}\label{crys}
	If $r$ is a crystalline representation over $\mathcal{O}$ with Hodge--Tate weights $\geq 0$ and $r \otimes_{\mathcal{O}} \mathbb{F} \cong \overline{r}$, then there exists a Breuil--Kisin module $\mathfrak{M}$ over $\mathbb{F}$ and a continuous $\varphi$-equivariant $C^\flat$-semilinear $G_K$-action on $\mathfrak{M} \otimes_{k[[u]]} C^\flat$ such that
	\begin{enumerate}
		\item[(D1)] $\sigma(x) - x \in \mathfrak{M} \otimes_{k[[u]]} u^{(e+p-1)/(p-1)} \mathcal{O}_{C^\flat}$ for all $\sigma \in G_K$ and $x \in \mathfrak{M}$,
		\item[(D2)] $\sigma(x) = x$ for $\sigma \in G_{K_\infty}$ and $x \in \mathfrak{M}$,
	\end{enumerate}
	and such that $\varphi,G_K$-equivariantly
	$$
	\mathfrak{M} \otimes_{k[[u]]} C^\flat \cong \overline{r}^\vee \otimes_{\mathbb{F}_p} C^\flat,
	$$
	where the Frobenius on the left hand side is that fixing $\overline{r}^\vee$ (so we can identify $\overline{r}^\vee = (\mathfrak{M} \otimes_{k[[u]]} C^\flat)^{\varphi=1}$).
\end{prop}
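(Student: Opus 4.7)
The plan is to invoke Kisin's classification of crystalline representations to produce $\mathfrak{M}$, to obtain the identification over $C^\flat$ via Fontaine's equivalence, and then to transport the $G_K$-action from the tautological diagonal action on $\overline{r}^\vee \otimes_{\mathbb{F}_p} C^\flat$. Since $r$ is crystalline over $\mathcal{O}$ with non-negative Hodge--Tate weights, Kisin's theorem produces a finite free Breuil--Kisin module $\mathfrak{M}_r$ over $W(k)[[u]] \otimes_{\mathbb{Z}_p} \mathcal{O}$ whose Frobenius has cokernel killed by a power of $E(u)$, and I set $\mathfrak{M} := \mathfrak{M}_r \otimes_\mathcal{O} \mathbb{F}$. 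Fontaine's equivalence between \'etale $\varphi$-modules and $G_{K_\infty}$-representations, applied after base-changing to the algebraically closed $C^\flat$, then yields the $\varphi,G_{K_\infty}$-equivariant isomorphism
$$
\mathfrak{M} \otimes_{k[[u]]} C^\flat \;\cong\; \overline{r}^\vee \otimes_{\mathbb{F}_p} C^\flat,
$$
under which $\mathfrak{M}$ sits on the left as a $G_{K_\infty}$-fixed lattice.

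To define the required $G_K$-action on the left, I transport the diagonal continuous $G_K$-action on the right (coming from $G_K$ acting on $\overline{r}^\vee$ and on $C^\flat$) via this isomorphism. Continuity and $\varphi$-equivariance are then automatic, and condition (D2) is tautological, since $G_{K_\infty}$ acts trivially on $\mathfrak{M}$ by construction and (through $\overline{r}^\vee$ only) on the right-hand side, so the two $G_{K_\infty}$-structures match.

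The main obstacle is verifying the divisibility (D1), which is where the crystalline (rather than merely semi-stable) hypothesis on $r$ genuinely enters. The approach is to ascend $\mathfrak{M}_r$ to the ring $A_{\mathrm{inf}} := W(\mathcal{O}_{C^\flat})$, using Liu's $(\varphi, \hat{G})$-module theory (or the prismatic viewpoint of Bhatt--Morrow--Scholze): for crystalline $r$ one obtains a canonical $G_K$-stable lattice $\mathfrak{M}_r \otimes_{\mathfrak{S}} A_{\mathrm{inf}}$ on which $(\sigma - 1)$ lands in $\mathfrak{M}_r \otimes_{\mathfrak{S}} ([\epsilon]-1) A_{\mathrm{inf}}$. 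Reducing modulo $p$ and observing that the image of $[\epsilon]-1$ in $\mathcal{O}_{C^\flat}$ has $u$-adic valuation $ep/(p-1)$, which is at least the required $(e+p-1)/(p-1) = e/(p-1)+1$ for $e \geq 1$, gives the claimed bound with room to spare. The careful bookkeeping of these valuations is essentially the integral-structure analysis underlying \cite{B21}, which the authors anyway exploit in Step 2 of their outline to set up the limit $\sigma = \lim_n \varphi^n \circ \sigma_{\mathrm{naive},\beta} \circ \varphi^{-n}$.
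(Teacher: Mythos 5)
Your overall strategy is the same as the paper's: apply Kisin's theory to $r$ (or $r^\vee$) and reduce modulo $p$ to get $\mathfrak{M}$, use Fontaine's equivalence after base-change to $C^\flat$ to identify $\mathfrak{M} \otimes_{k[[u]]} C^\flat \cong \overline{r}^\vee \otimes_{\mathbb{F}_p} C^\flat$, transport the diagonal $G_K$-action from the right, and then verify (D1) by lifting to $A_{\mathrm{inf}}$. Conditions (D2), continuity, and $\varphi$-equivariance are indeed automatic for the transported action, exactly as you say. The paper's one-paragraph proof cites \cite[2.1.12]{B20} applied to $r^\vee$ for precisely this.

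However, there is a genuine gap in your justification of (D1). You claim that for crystalline $r$ the element $(\sigma-1)m$ lands in $\mathfrak{M}_r \otimes_{\mathfrak{S}} ([\epsilon]-1)A_{\mathrm{inf}}$, i.e.\ in $\mu A_{\mathrm{inf}}$ with $\mu = [\epsilon]-1$. This is not the correct bound: what the crystalline condition actually yields (and what \cite[2.1.12]{B20} asserts) is divisibility by $[\pi^\flat]\varphi^{-1}(\mu)$. Modulo $p$ these generate $u^{(e+p-1)/(p-1)}\mathcal{O}_{C^\flat}$ (since $[\pi^\flat]\mapsto u$ has valuation $1$ and $\varphi^{-1}(\mu)\mapsto$ an element of valuation $e/(p-1)$), which is \emph{exactly} the exponent in (D1) with no room to spare. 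By contrast, $\mu$ reduces mod $p$ to valuation $ep/(p-1)$, which is strictly larger than $(e+p-1)/(p-1)$ whenever $e>1$; so your claimed $\mu$-divisibility is a strictly stronger statement in the ramified case, and it is not a theorem. (It is also not what Liu's $(\varphi,\hat G)$-module criterion, nor the BMS/prismatic descent data, provides: the factor $[\pi^\flat]$ is precisely what separates the crystalline condition from the semistable one.) The fact that your inequality $ep/(p-1)\geq (e+p-1)/(p-1)$ is strict for $e>1$ should have been a warning sign, since the exponent $(e+p-1)/(p-1)$ in (D1) is sharp. To repair the proof, replace the $\mu$-divisibility claim by the correct $[\pi^\flat]\varphi^{-1}(\mu)$-divisibility, which gives (D1) directly upon reduction.
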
	
\begin{proof}
	This follows by applying \cite[2.1.12]{B20} to the crystalline representation $r^\vee$ (note that Hodge--Tate weights in \cite{B20} are normalised to be the negative of those here so $r^\vee$ has Hodge--Tate weights $\geq 0$ in the sense of loc. cit.) and base-changing along $\mathcal{O} \rightarrow \mathbb{F}$. Here we also use the observation that the image of the element $\mu := [\epsilon] -1$ in loc. cit. modulo~$p$ generates the ideal $u^{ep/(p-1)}\mathcal{O}_{C^\flat}$ (cf. \cite[5.1.3]{Fon94}), and so $[\pi^\flat] \varphi^{-1}(\mu)$ modulo $p$ generates the ideal $u^{1+ep/(p-1)}\mathcal{O}_{C^\flat} = u^{(e+p-1)/(p-1)}\mathcal{O}_{C^\flat}$.
\end{proof}

The following is (a minor alteration of) the key technical result in \cite{gls15}.
\begin{prop}\label{gls}
	Suppose that $r$ has Hodge type $\sigma_{a,0}$ and that $r \otimes_{\mathcal{O}} \mathbb{F}$ is reducible. Let $\mathfrak{M}$ be the mod~$p$  Breuil--Kisin module associated to $r$ as in Proposition~\ref{crys}. Then there exist integers $0 \leq s_\tau,t_\tau \leq p$ with
	\begin{equation}\label{s,tcond}
		s_\tau + t_\tau = a_\tau +e, \qquad \operatorname{max}\lbrace s_\tau,t_\tau \rbrace \geq a_\tau +1
	\end{equation}
	and an $\mathfrak{S}_{\mathbb{F}}$-basis $\beta$ of $\mathfrak{M}$ so that
	$$
	\varphi(\beta) = \beta\begin{pmatrix}
		x_1(u^{t_\tau}) & (y_\tau) \\ 0 & x_2(u^{s_\tau})
	\end{pmatrix}
	$$
	for some $x_1,x_2 \in (k \otimes_{\mathbb{F}_p} \mathbb{F})^\times$ and $y_\tau \in u^{\delta_\tau}\overline{\mathbb{F}}_p[[u]]$, where
	$$
	\delta_\tau = \begin{cases}
		a_\tau +1 & \text{if $t_\tau < a_\tau+1$;}\\
		0 & \text{if $t_\tau \geq a_\tau+1$.}
	\end{cases}
	$$
\end{prop}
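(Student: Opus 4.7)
The plan is to deduce this from the main structural classification of Gee--Liu--Savitt \cite{gls15}, from which it differs only by a final change of basis normalising the off-diagonal entry; this is the ``minor alteration'' referenced in the statement.

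First, reducibility of $\overline{r}$ combined with Proposition~\ref{crys} (and \'etale $\varphi$-module theory on the generic fibre) produces a short exact sequence of Breuil--Kisin modules $0 \to \mathfrak{M}' \to \mathfrak{M} \to \mathfrak{M}'' \to 0$ with rank-one sub and quotient. Since rank-one Breuil--Kisin modules over $\mathbb{F}$ are classified (after enlarging $\mathbb{F}$ if needed) by their Frobenius scalar, one can pick generators $e_1 \in \mathfrak{M}'$ and $\bar e_2 \in \mathfrak{M}''$ with $\varphi(e_1) = x_1 (u^{t_\tau}) e_1$ and $\varphi(\bar e_2) = x_2 (u^{s_\tau}) \bar e_2$ for suitable units $x_1, x_2 \in (k\otimes_{\mathbb{F}_p}\mathbb{F})^\times$ and non-negative integers $s_\tau, t_\tau$. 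Lifting $\bar e_2$ to any $e_2 \in \mathfrak{M}$ produces a basis $\beta = (e_1,e_2)$ with $\varphi$ upper-triangular of the claimed shape, for some $y_\tau \in \mathbb{F}[[u]]$. Computing $\det\varphi$ in the $\tau$-component gives $s_\tau + t_\tau = a_\tau + e$, since this valuation equals the sum of $\kappa$-Hodge--Tate weights over $\kappa|_k=\tau$, which for Hodge type $\sigma_{a,0}$ equals $(a_\tau+1)+(e-1)\cdot 1$. The bounds $s_\tau,t_\tau \leq p$ and the inequality $\max\{s_\tau,t_\tau\}\geq a_\tau+1$ are the Hodge-polygon constraints at $\tau$ coming directly from \cite{gls15}.

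It remains to normalise $y_\tau$. Changing basis via $e_2 \rightsquigarrow e_2 + z e_1$ for $z \in \mathfrak{S}_\mathbb{F}$ replaces $y_\tau$ by $y_\tau + \varphi(z) x_1 (u^{t_\tau}) - z x_2 (u^{s_\tau})$ without altering the diagonal. When $t_\tau\geq a_\tau+1$ we have $\delta_\tau=0$ and there is nothing to do. When $t_\tau < a_\tau+1$, the inequality $\max\{s_\tau,t_\tau\}\geq a_\tau+1$ forces $s_\tau\geq a_\tau+1$, so the term $z x_2 (u^{s_\tau})$ contributes only to degrees $\geq a_\tau+1$, while $\varphi(z) x_1 (u^{t_\tau})$ contributes to degrees $t_\tau, p+t_\tau, 2p+t_\tau,\ldots$; since $a_\tau\leq p-1$, only the degree-$t_\tau$ term affects the range $[0,a_\tau]$. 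The main obstacle is therefore verifying that all coefficients of $y_\tau$ in the range $[0,a_\tau]\setminus\{t_\tau\}$ already vanish before this modification, so that the single free parameter in $z$ suffices to kill the remaining low-degree coefficient; this is the essential additional content one must extract from the Gee--Liu--Savitt normal form, after which choosing the constant term of the appropriate component of $z$ completes the normalisation and yields the claimed $y_\tau \in u^{\delta_\tau}\overline{\mathbb{F}}_p[[u]]$.
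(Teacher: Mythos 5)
Your overall route is the same as the paper's: invoke the Gee--Liu--Savitt classification \cite[5.1.5]{gls15} for the triangular normal form and then perform a change of basis $e_2 \rightsquigarrow e_2 + z e_1$ to clean up the off-diagonal entry. Your change-of-basis analysis is correct and matches the paper's (the paper's version of \cite[5.1.5]{gls15} produces $y'_\tau$ supported in degrees $\geq \delta_\tau$ together with possibly one term of degree $t_\tau$ when $t_\tau < a_\tau+1$, and exactly the coboundary term you describe removes it, introducing only terms of degree $\geq \delta_\tau$ since $s_\tau \geq a_\tau+1$ in that case).

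However, there is a genuine gap, and you have flagged it yourself without closing it: the claim that ``all coefficients of $y_\tau$ in the range $[0,a_\tau]\setminus\{t_\tau\}$ already vanish'' is not something one can ``extract'' as a routine detail from a normal form for reducible Breuil--Kisin modules --- it \emph{is} the content of \cite[5.1.5]{gls15}, and it is exactly the point where crystallinity of the lift $r$ enters. For an arbitrary extension of rank-one Breuil--Kisin modules with the given diagonal, the extension class $y_\tau$ modulo coboundaries $\varphi(z)x_1(u^{t_\tau}) - zx_2(u^{s_\tau})$ has many free low-degree coefficients, so the single parameter in $z$ you identify cannot kill them; the statement would simply be false without the input from the integral $p$-adic Hodge theory of \cite{gls15}. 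Similarly, the inequality $\max\{s_\tau,t_\tau\}\geq a_\tau+1$ is not a formal Hodge-polygon consequence of $s_\tau+t_\tau = a_\tau+e$ and $s_\tau,t_\tau\in[0,p]$; it too is part of the cited theorem. So the proof is acceptable only if these points are attributed explicitly to \cite[5.1.5]{gls15} (together with \cite{wan17} to remove the hypothesis $p>2$, which the paper needs since this section does not assume $p>2$), rather than presented as constraints to be verified later.
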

\begin{proof}
	As we will explain this follows from \cite[5.1.5]{gls15}. Notice however that in loc. cit. it is assumed that $p>2$. This requirement has been removed by the work of \cite{wan17} where it is shown that \cite[5.1.5]{gls15} remains true, provided the uniformiser $\pi \in K$ is chosen as in \cite[2.1]{wan17}. 
	
	It follows from \cite[5.1.5]{gls15} there are $s_\tau,t_\tau$ satisfying \eqref{s,tcond} and a basis $\beta$ of $\mathfrak{M}$ satisfying $\varphi(\beta) = \beta \left( \begin{smallmatrix}
		x_1(u^{t_\tau}) & (y'_\tau) \\ 0 & x_2(u^{s_\tau})
	\end{smallmatrix} \right)$ for some $x_1,x_2 \in (k\otimes_{\mathbb{F}_p} \mathbb{F})^\times$ and polynomials $y'_\tau \in \overline{\mathbb{F}}_p[u]$ as claimed, except that possibly $y'_\tau$ has a non-zero term of degree $t_\tau$ if $t_\tau < a_\tau+1$. A straightforward change of basis argument allows us to remove these $u^{t_\tau}$ terms at the cost of introducing terms of degree $\geq \delta_\tau$. This gives the formulation here. 
\end{proof}

We conclude this section by explaining how one can describe the restriction of $\overline{r}^\vee = (r \otimes_{\mathcal{O}} \mathbb{F})^\vee$ to $G_{K_\infty}$ in terms of the matrices from Proposition~\ref{gls}.  We do this by setting 
\begin{equation}\label{D}
	D =  \begin{pmatrix}
		d_1 & dd_1 \\ 0 & d_2
	\end{pmatrix} \in \operatorname{Mat}(C^\flat \otimes_{\mathbb{F}_p} \mathbb{F})
\end{equation}
with entries defined by the equations
$$
d_1 = \varphi(d_1)	x_1(u^{t_\tau}), \qquad d_2 = \varphi(d_2)x_2(u^{s_\tau}), \qquad \varphi(d) -d = -\frac{d_2}{d_1x_2} ( u^{-s_\tau}y_\tau).
$$
Then $D = \left( \begin{smallmatrix}
	x_1(u^{t_\tau}) & (y_\tau) \\ 0 & x_2(u^{s_\tau})
\end{smallmatrix} \right) \varphi(D)$ and so, if $\alpha = \beta D$ in $\mathfrak{M} \otimes_{k[[u]]} C^\flat$ then $\varphi(\alpha) = \alpha$. Therefore, $\alpha$ is an $\mathbb{F}$-basis of $\overline{r}^\vee = (\mathfrak{M} \otimes_{k[[u]]} C^\flat)^{\varphi=1}$ and (D2) from Proposition~\ref{crys} implies
\begin{equation}\label{inftyaction}
	\sigma(\alpha) = \alpha D^{-1} \sigma(D) = \alpha \begin{pmatrix}
		\frac{\sigma(d_1)}{d_1} & \sigma(d)\frac{\sigma(d_1)}{d_1}  - d \frac{\sigma(d_2)}{d_2}\\ 0 & \frac{\sigma(d_2)}{d_2}
	\end{pmatrix}	
\end{equation}
for $\sigma  \in G_{K_\infty}$. The elements $d_1$ and $d_2$ can be easily be described and this allows us to compute the characters appearing on the diagonal of \eqref{inftyaction}. First note that, by an easy calculation,
we can write $d_1 = \widetilde{x}_1 (v^{-\Omega_{\tau, t}})$ for $\widetilde{x}_1 \in (l \otimes_{\mathbb{F}_p} \mathbb{F})^\times$ satisfying $\widetilde{x}_1 = \varphi(\widetilde{x}_1) x_1$ and $\Omega_{\tau, t} = \sum_{i=0}^{f-1} p^i t_{\tau \circ \varphi^i}$ as in \eqref{omegadef}. Similarly to the calculation in (\ref{rem-omega}), we find
$$
\sigma((v^{\Omega_{\tau, t}})_\tau) (v^{-\Omega_{\tau, t}})_\tau = (\omega_\tau(\sigma)^{\Omega_{\tau, t}})_\tau = \prod_\tau \omega_\tau(\sigma)^{t_\tau} 
$$
for $\sigma \in G_{K_\infty}$. On the other hand, $\widetilde{x}_1 \in (l \otimes_{\mathbb{F}_p} \mathbb{F})^\times$ and so $G_{K_\infty}$ acts on $\widetilde{x}_1$ by multiplication with an unramified character $\psi_1$. Therefore,
$$
\sigma(d_1)d_1^{-1} = \psi_1(\sigma) \prod_\tau \omega_\tau(\sigma)^{-t_\tau}.
$$
Similarly with $d_1$ replaced by $d_2$ and $t_\tau$ with $s_\tau$. We conclude that 
\begin{equation}\label{diagonalofr}
	\overline{r}^\vee|_{G_{K_\infty}} \cong \begin{pmatrix}
		\psi_1 \prod_\tau \omega_\tau^{-t_\tau} & c' \\ 0 & \psi_2 \prod_\tau \omega_\tau^{-s_\tau}
	\end{pmatrix}
\end{equation}
for some cocycle $c'$. From this one easily deduces the following corollary; note that this is exactly the argument used to prove Theorem~\ref{semisimple} in \cite{gls15}.

\begin{cor}\label{crimpliesexpinsemisimplecase}
	If $\sigma_{a,0} \in W^{\operatorname{cr}}(\overline{r})$ with $\overline{r}$ reducible, then $\sigma_{a,0} \in W^{\operatorname{exp}}(\overline{r}^{\operatorname{ss}})$.
\end{cor}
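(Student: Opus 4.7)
The approach is to read off the semisimplification from the formula \eqref{diagonalofr} and then pattern-match against Definition~\ref{exp}. Starting from a crystalline lift $r$ over $\mathcal{O}$ of Hodge type $\sigma_{a,0}$ with $r \otimes_\mathcal{O} \mathbb{F} \cong \overline{r}$, I would invoke Proposition~\ref{gls} (applicable since $\overline{r}$ is reducible) to obtain integers $0 \leq s_\tau,t_\tau \leq p$ satisfying $s_\tau + t_\tau = a_\tau + e$ and $\max\{s_\tau,t_\tau\} \geq a_\tau +1$, together with the associated Breuil--Kisin module $\mathfrak{M}$ presented in upper triangular form. Plugging this into \eqref{diagonalofr} already gives the diagonal of $\overline{r}^\vee|_{G_{K_\infty}}$ as $\psi_1\prod_\tau \omega_\tau^{-t_\tau}$ and $\psi_2 \prod_\tau \omega_\tau^{-s_\tau}$ for unramified $\psi_1,\psi_2$.

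The next step is to upgrade this description from a statement about $G_{K_\infty}$ to one about $I_K$. Here the key observation is that, since $K_\infty/K$ is totally ramified and pro-$p$, the quotient $I_K/(I_K\cap G_{K_\infty})$ is pro-$p$, while the tame inertia quotient $I_K^t = I_K/I_K^{\operatorname{wild}}$ is prime-to-$p$; therefore the composite $I_K\cap G_{K_\infty} \hookrightarrow I_K \twoheadrightarrow I_K^t$ is surjective. Since every character $G_K \to \overline{\mathbb{F}}_p^\times$ is tame on $I_K$, this forces
$$
\overline{r}^{\operatorname{ss}}|_{I_K} \cong \prod_\tau \omega_\tau^{t_\tau} \oplus \prod_\tau \omega_\tau^{s_\tau}
$$
(after dualizing, the minus signs and unramified twists disappear on inertia).

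Finally I would match this with Definition~\ref{exp} by setting $J = \{\tau : t_\tau \geq a_\tau+1\}$ and
$$
x_\tau = \begin{cases} t_\tau - a_\tau - 1 & \text{if } \tau \in J,\\ t_\tau & \text{if } \tau \notin J.\end{cases}
$$
A short check using $s_\tau + t_\tau = a_\tau + e$ and the max condition (which forces $s_\tau \geq a_\tau + 1$ whenever $\tau \notin J$) shows $x_\tau \in [0,e-1]$, and substituting these values into the two displayed characters recovers exactly the shape required in the first bullet of Definition~\ref{exp} with $b_\tau = 0$. Hence $\sigma_{a,0}\in W^{\operatorname{exp}}(\overline{r}^{\operatorname{ss}})$.

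There is essentially no hard step here: Proposition~\ref{gls} and the formulas surrounding \eqref{diagonalofr} have already done all the substantive $p$-adic Hodge theory. The only mildly subtle point is the reduction from $G_{K_\infty}$ to $I_K$, for which the pro-$p$-versus-prime-to-$p$ argument above suffices. The whole argument is simply a bookkeeping exercise translating the inequalities on $(s_\tau,t_\tau)$ into the parametrization $(J,x_\tau)$ used in the definition of $W^{\operatorname{exp}}$.
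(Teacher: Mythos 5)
Your proof is correct and follows essentially the same route as the paper's: apply Proposition~\ref{gls} and \eqref{diagonalofr}, then translate the constraints on $(s_\tau,t_\tau)$ into a pair $(J,x)$ as in Definition~\ref{exp} (your choice of $J$ and $x_\tau$ is the complementary but equivalent parametrization, swapping the roles of the two diagonal characters). The only difference is at the passage from $G_{K_\infty}$ to $I_K$, where the paper cites the equivalence between semisimple representations of $G_K$ and of $G_{K_\infty}$, while your direct tame-inertia argument (the image of $I_{K_\infty}$ surjects onto $I_K^t$ because $K_\infty/K$ is totally wildly ramified, and all $\overline{\mathbb{F}}_p^\times$-valued characters are tame) is a perfectly adequate elementary substitute in this reducible setting.
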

\begin{proof}
	Choose a finite extension $E/\mathbb{Q}_p$ with integers $\mathcal{O}$ so that a crystalline $\overline{\mathbb{Z}}_p$-representation $r$ witnessing $\sigma_{a,0} \in W(\overline{r})$ is defined over $\mathcal{O}$. Take $\mathbb{F}$ equal the residue field of $\mathcal{O}$. Enlarging $E$ if necessary we can assume that there is an embedding $k \hookrightarrow \mathbb{F}$ so the above results apply. In particular, applying Theorem~\ref{crys} to $r$ produces a Breuil--Kisin module $\mathfrak{M}$ with shape as in Proposition~\ref{gls}. 
	
	From \eqref{diagonalofr} it follows that $\overline{r}|_{G_{K_\infty}} \cong \left( \begin{smallmatrix}
		\psi_2 \prod_\tau \omega_\tau^{s_\tau} & c \\ 0 & \psi_1 \prod_\tau \omega_\tau^{t_\tau}
	\end{smallmatrix}\right)$ for some cocycle $c$ and $s_\tau,t_\tau$ satisfying \eqref{s,tcond}. Since restriction induces an equivalence between semi-simple  representations of $G_K$ and semi-simple representations of $G_{K_\infty}$ (see, for example, \cite[2.2.1]{Bar22}), it follows that 
	$$
	\overline{r}^{\operatorname{ss}}|_{I_K} \cong \left( \begin{smallmatrix}
		\prod_\tau \omega_\tau^{s_\tau} & 0 \\ 0 & \prod_\tau \omega_\tau^{a_\tau+e -s_\tau} 
	\end{smallmatrix}\right)= \begin{pmatrix} 
		\prod_{\tau \in J} \omega_{\tau}^{a_\tau + 1 + x_{\tau}} \prod_{\tau \not\in J} \omega_{\tau}^{x_{\tau}} & 0 \\ 
		0 & \prod_{\tau \not\in J} \omega_{\tau}^{a_\tau + e - x_{\tau}} \prod_{\tau \in J} \omega_{\tau}^{e-1 - x_{\tau}}
	\end{pmatrix},
	$$ 
	where $J = \lbrace \tau \in \operatorname{Hom}_{\mathbb{F}_p}(k,\overline{\mathbb{F}}_p)\mid t_\tau < a_\tau+1 \rbrace$ and 
	$$
	x_\tau = \begin{cases}
		s_\tau - a_\tau -1 & \text{if $\tau \in J$;} \\
		s_\tau & \text{if $\tau \not\in J$.}
	\end{cases}
	$$
	Notice that $\tau \in J$ implies $a_\tau +e \geq s_\tau \geq a_\tau+1$ and so $x_\tau \in [0,e-1]$ and likewise if $\tau \not\in J$. It follows that $\sigma_{a,0} \in W^{\operatorname{exp}}(\overline{r})^{\operatorname{ss}}$, as required.
\end{proof}
\section{Constructing Galois actions}
The goal here is to show that the $G_{K_\infty}$-action on $\overline{r}^\vee$ described in \eqref{inftyaction} can be extended to a description of the whole $G_K$-action. We point out that Theorem~\ref{galois} does not require the assumption that $p>2$.

\begin{thm}\label{galois}
	Assume $\mathfrak{M}$ is a Breuil--Kisin module over $\mathbb{F}$ with shape as in Proposition~\ref{gls}. Assume additionally that
	$$
	(s_\tau,t_\tau) \neq (e+p-1,0)
	$$ 
	for at least one $\tau$. Then there exists a unique continuous $\varphi$-equivariant $C^\flat$-semilinear action of $G_K$ on $\mathfrak{M} \otimes_{k[[u]]} C^\flat$ satisfying:
	\begin{enumerate}
		\item[(D1)] $\sigma(x) - x \in \mathfrak{M} \otimes_{k[[u]]} u^{(e+p-1)/(p-1)} \mathcal{O}_{C^\flat}$ for all $\sigma \in G_K$ and $x \in \mathfrak{M}$;
		\item[(D2)] $\sigma(x) = x$ for $\sigma \in G_{K_\infty}$ and $x \in \mathfrak{M}$.
	\end{enumerate}
	Furthermore, if $\beta$ is a basis of $\mathfrak{M}$ with $\varphi(\beta) = \beta C$, then this $G_K$-action can be described concretely by $\sigma(\beta) = C_\sigma \beta$ for
	$$
	C_\sigma = \operatorname{lim}_{n \rightarrow \infty} \left( C\varphi(C) \ldots \varphi^{n}(C) \varphi^{n}(\sigma(C^{-1})) \ldots \varphi(\sigma(C^{-1})) \sigma(C^{-1}) \right) \in \operatorname{Mat}(C^\flat \otimes_{\mathbb{F}_p} \mathbb{F}).
	$$
\end{thm}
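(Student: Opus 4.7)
The plan is to realize the desired $G_K$-action as the limit of Frobenius conjugates of a naive $C^\flat$-semi-linear endomorphism, as outlined in Step~2 of the introduction, and to deduce uniqueness from a rigidity argument based on $\varphi$-equivariance together with (D1).

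First I would define the naive endomorphism $\sigma_0$ on $\mathfrak{M} \otimes_{k[[u]]} C^\flat$ by declaring that it fixes the basis $\beta$ and extending $C^\flat$-semi-linearly; since $\varphi$ is bijective on $\mathfrak{M} \otimes_{k[[u]]} C^\flat$ (as $E(u)=u^e$ is a unit there), the conjugates $\sigma_n := \varphi^n \circ \sigma_0 \circ \varphi^{-n}$ are well-defined. Writing $\varphi^n(\beta) = \beta A_n$ with $A_n = C\varphi(C)\cdots \varphi^{n-1}(C)$, semi-linearity forces $\sigma_n(\beta) = \beta \cdot A_n \sigma(A_n^{-1})$, which after re-indexing recovers the formula in the theorem.

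Convergence of $\sigma_n(\beta)$ in the $u$-adic topology I would then reduce, via the telescope
\[
A_{n+1}\sigma(A_{n+1}^{-1}) - A_n \sigma(A_n^{-1}) = A_n \cdot \varphi^n\bigl(C\sigma(C^{-1}) - I\bigr) \cdot \sigma(A_n^{-1}),
\]
to two estimates: (i) the entries of $C\sigma(C^{-1}) - I$ have strictly positive $u$-adic valuation, and (ii) the entries of $A_n \sigma(A_n^{-1})$ have $u$-adic valuation bounded below independently of $n$. Estimate (i) follows from $\sigma(u)/u - 1 \in u^{ep/(p-1)} \mathcal{O}_{C^\flat}$ (as computed in the proof of Theorem~\ref{Abr}) combined with the explicit shape of $C$ from Proposition~\ref{gls}. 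For (ii), I would write $A_n \sigma(A_n^{-1})$ as an iterated telescoping product, so that each factor $\varphi^i(C\sigma(C^{-1}))$ differs from the identity by a term whose $u$-adic valuation grows with $i$, yielding a uniform-in-$n$ lower bound. The hypothesis $(s_\tau, t_\tau) \neq (e+p-1, 0)$ enters precisely here: it ensures the relevant inequalities are strict, so that the strong contraction coming from applying $\varphi^n$ to the error in (i) overpowers the bounded factors in (ii). In the excluded marginal case (essentially $e=1$ with all $a_\tau = p-1$) the estimates become tight and the limit genuinely fails to converge, which also explains the degenerate behaviour singled out in Lemma~\ref{lem-degen}.

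Given convergence, the limit $C_\sigma$ defines a cocycle because the naive endomorphisms $\sigma_0,\sigma_0'$ compose to $(\sigma\tau)_0$, and $\varphi$-equivariance follows by passing to the limit in the tautological telescope $C \varphi(C_\sigma^{(n)}) = C_\sigma^{(n+1)} \sigma(C)$. Condition (D2) is immediate: if $\sigma \in G_{K_\infty}$ then $\sigma(u)=u$, so $\sigma(C)=C$, hence $C_\sigma^{(n)} = I$ for all $n$. Condition (D1) follows from the $n=0$ telescoping term $C\sigma(C^{-1}) - I = C(\sigma(C^{-1})-C^{-1})$ together with the refined valuation estimate, which precisely yields entries in $u^{(e+p-1)/(p-1)}\mathcal{O}_{C^\flat}$. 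For uniqueness, any other such matrix $C'_\sigma$ satisfies the same fixed-point equation $X = C\varphi(X)\sigma(C^{-1})$, so the difference $\Delta := C_\sigma - C'_\sigma$ satisfies $\Delta = A_n \varphi^n(\Delta) \sigma(A_n^{-1})$ for every $n$; condition (D1) forces $\Delta$-entries into $u^{(e+p-1)/(p-1)}\mathcal{O}_{C^\flat}$, and the uniform lower bound on $A_n \sigma(A_n^{-1})$ combined with $\varphi^n(\Delta) \to 0$ forces $\Delta=0$. The main technical obstacle throughout is this convergence/boundedness estimate on $A_n\sigma(A_n^{-1})$, which requires carefully tracking cancellations between the large positive $u$-powers in $A_n$ (coming from $u^{t_\tau}, u^{s_\tau}$) and the corresponding negative powers in $\sigma(A_n^{-1})$, and pinpointing the role of the hypothesis on $(s_\tau,t_\tau)$.
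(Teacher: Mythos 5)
Your overall strategy --- define the naive semilinear action fixing $\varphi(\beta)$, conjugate by $\varphi^n$, prove convergence by telescoping, and get uniqueness from the fixed-point equation $X = C\varphi(X)\sigma(C^{-1})$ --- is exactly the mechanism the paper uses; the paper simply outsources the generic case to \cite[11.3]{B21} after verifying its two hypotheses (that $(u^{q_\tau})\mathfrak{M}\subset\mathfrak{M}^\varphi$ with not all $q_\tau = e+p-1$, and that $C\sigma(C^{-1})-1 \in u^{(e+p-1)/(p-1)}\operatorname{Mat}(\mathcal{O}_{C^\flat}\otimes\mathbb{F})$, the latter via the same valuation computation you sketch). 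So in the generic case your proposal is essentially an unpacking of the cited input.

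However, there is a genuine gap. You write that the hypothesis $(s_\tau,t_\tau)\neq(e+p-1,0)$ excludes ``the'' marginal case where the estimates become tight. It does not: the mirror configuration $(s_\tau,t_\tau)=(0,e+p-1)$ for \emph{every} $\tau$ is permitted by the hypothesis (the excluded pair is ordered the other way), and in that configuration the linearised Frobenius has elementary divisors $u^{q_\tau}$ with $q_\tau = e+p-1$ for all $\tau$. This is precisely the situation in which your ``uniform lower bound'' degenerates: the entries of $A_n\,\varphi^n(\Delta)\,\sigma(A_n)^{-1}$ are then bounded below only by the constant $(e+p-1)/(p-1)$ rather than tending to infinity, so neither your convergence argument (the telescoping terms need not go to zero under the crude bound) nor your uniqueness argument (``$\varphi^n(\Delta)\to 0$ forces $\Delta=0$'') closes as stated. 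The paper handles this case separately: it first changes basis to make $C$ diagonal, after which convergence of the limit is an explicit computation, and it proves uniqueness by a finer, entry-specific analysis --- a nonzero off-diagonal discrepancy would have to sit in valuation exactly $-\Omega$ (coming from the factor $v^{-\Omega}$ in $D$), which is negative and therefore violates (D1). To repair your argument you would need to either (a) isolate this case and run the paper's diagonalisation, or (b) replace the crude bound on $A_n\varphi^n(\Delta)\sigma(A_n)^{-1}$ by an entry-by-entry analysis that exploits the triangular shape of $A_n$ and shows the only self-consistent valuation for the off-diagonal entry of $\Delta$ is negative. As a smaller point, your parenthetical identification of the \emph{excluded} case with Lemma~\ref{lem-degen} is the right intuition for why that case must be excluded (uniqueness genuinely fails there, reflecting the peu/tr\`es ramifi\'ee dichotomy), but it is exactly this conflation of the two oppositely-ordered tight cases that hides the missing argument.
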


We point out that uniqueness of this $G_K$-action can also be deduced from \cite[6.1.3]{gls15} (though in loc. cit. the language of $\varphi,\hat{G}$-modules is used).

\begin{proof}
	As we will explain below, in most cases the theorem follows from an application of \cite[11.3]{B21}. Unfortunately, these results do not apply in the special case where $(s_\tau,t_\tau) = (0,p+e-1)$ for every $\tau$. We treat this special case directly at the end of the proof. (Note that this special case is not excluded by the assumption in the statement since the condition on $(s_\tau, t_\tau)$ is ordered in the opposite way.)
	
	For now assume $(s_\tau,t_\tau) \neq (0,p+e-1)$ for some $\tau$. Since we also have $(s_\tau,t_\tau) \neq (p+e-1,0)$ for some $\tau$, there are integers $0 \leq q_\tau \leq e+p-1$ not all equal to $e+p-1$ such that
	\begin{itemize}
		\item $(u^{q_\tau}) \mathfrak{M} \subset \mathfrak{M}^\varphi \subset \mathfrak{M}$ for $\mathfrak{M}^\varphi$ the image of the linearised Frobenius on $\mathfrak{M}$.
	\end{itemize} 
	Indeed, if $\beta$ is a basis of $\mathfrak{M}$ with $\varphi(\beta) = \beta C$, then $(u^{q_\tau})\mathfrak{M}$ is generated by $\varphi(\beta) (u^{q_\tau})C^{-1}$, and so the assertion is equivalent to asking that $(u^{q_\tau})C^{-1} \in \operatorname{Mat}(k[[u]] \otimes_{\mathbb{F}_p} \mathbb{F})$. Following \cite[11.1]{B21}, the choice of $\beta$ also allows us to define a ``naive'' $C^\flat$-semilinear $G_K$-action $\sigma_{\operatorname{naive},\beta}$ on $\mathfrak{M} \otimes_{k[[u]]} C^\flat$ by $C^\flat$-semilinearly extending the $G_K$-action which is trivial on $\varphi(\beta)$. Typically, $\sigma_{\operatorname{naive},\beta}$ will not be $\varphi$-equivariant. However, \cite[11.3]{B21} (and its proof) describes conditions on $\mathfrak{M}$ which ensure $\varphi^n \circ \sigma_{\operatorname{naive},\beta} \circ \varphi^{-n}$
	converges to a $\varphi$-equivariant $C^\flat$-semilinear $G_K$-action on $\mathfrak{M} \otimes_{k[[u]]} C^\flat$ satisfying (D1) and (D2). To explain this notice that if $\mathfrak{M}$ satisfies the previous bullet point and
	\begin{itemize}
		\item $\sigma_{\operatorname{naive},\beta}(x) - x \in \mathfrak{M} \otimes_{k[[u]]} u^{(e+p-1)/(p-1)}\mathcal{O}_{C^\flat}$ for every $x \in \mathfrak{M}$ and $\sigma \in G_K$, 
	\end{itemize}
	then $(\mathfrak{M},\beta)$ defines an object in the category denoted $\widetilde{Z}_2^{\nabla_\sigma,r}(\mathbb{F})$ in \cite[11.2]{B21}. The assertion of \cite[11.3]{B21} implies that there exists a unique $G_K$-action $\sigma$ on $\mathfrak{M} \otimes_{k[[u]]} \mathcal{O}_{C^\flat}$ making $(\mathfrak{M},\sigma)$ into an object of $Y^{\leq h}_d(\mathbb{F})$. Considering the definition of $Y^{\leq h}_2(\mathbb{F})$ from \cite[10.2]{B21} we see this is equivalent to asking that $\sigma$ satisfies $(D1)$ and $(D2)$. Finally, examining the proof of \cite[11.3]{B21} shows that this action $\sigma$ is obtained as the limit of $\varphi^n \circ \sigma_{\operatorname{naive},\beta} \circ \varphi^{-n}$ as claimed.
	
	To see the formula for $C_\sigma$ in the theorem note that, by definition, $\sigma_{\operatorname{naive},\beta}(\beta) = \beta C \sigma(C)^{-1}$. Therefore,
	$$
	\begin{aligned}
		\varphi^n \circ \sigma_{\operatorname{naive},\beta} \circ \varphi^{-n}(\beta) = \varphi^n\circ \sigma_{\operatorname{naive},\beta}\left( \beta \varphi^{-1}(C^{-1}) \varphi^{-2}(C^{-1}) \ldots \varphi^{-n}(C^{-1})  \right)  \\
		= \varphi^n\left(\beta C\sigma(C^{-1}) \varphi^{-1}(\sigma(C^{-1})) \varphi^{-2}(\sigma(C^{-1})) \ldots \varphi^{-n}(\sigma( C^{-1}))\right) \\
		= \beta \left( C \varphi(C) \ldots \varphi^{n-1}(C) \varphi^n(C) \varphi^n(\sigma(C^{-1})) \varphi^{n-1}(\sigma(C^{-1})) \ldots \varphi(\sigma(C^{-1})) \sigma(C^{-1})\right). 
	\end{aligned}
	$$
	We've already seen the first bullet point holds. To apply these results we need to check the second does also.
	
	Concretely, since $\sigma_{\operatorname{naive},\beta}(\beta) = \beta C\sigma(C^{-1})$, the second bullet point is asserting that $C\sigma(C^{-1})-1 \in u^{(e+p-1)/(p-1)}\operatorname{Mat}(\mathcal{O}_{C^\flat} \otimes_{\mathbb{F}_p} \mathbb{F})$. To check this we take $\beta$ a basis as in Proposition~\ref{gls} so that
	$$
	\begin{aligned}
		C \sigma(C^{-1}) -1 &= \begin{pmatrix}
			x_1( u^{t_\tau}) & (y_\tau) \\ 0 & x_2(u^{s_\tau}) \end{pmatrix}\begin{pmatrix}
			x_1^{-1}(\sigma (u)^{-t_\tau}) & -\frac{1}{x_1x_2}(\sigma(y_\tau) \sigma(u)^{-e-a_\tau}) \\ 0 & x_2^{-1}(\sigma(u)^{-s_\tau})
		\end{pmatrix} -1 \\ &= \begin{pmatrix}
			\left(\left(\frac{u}{\sigma(u)}\right)^{t_\tau} - 1\right) & \frac{1}{x_2\sigma(u)^{s_\tau}}\left( y_\tau - \sigma(y_\tau)\left(\frac{u}{\sigma(u)}\right)^{t_\tau} \right) \\ 0 & \left(\left(\frac{u}{\sigma(u)}\right)^{s_\tau} - 1\right)
		\end{pmatrix}.
	\end{aligned}
	$$
	For the required divisibility we use that $\frac{u}{\sigma(u)}-1 \in u^{ep/(p-1)}\mathcal{O}_{C^\flat}$ (see the argument given in the proof of Theorem~\ref{Abr}). This clearly implies the required divisibility for the diagonal entries. For the upper right entry, note that
	$$
	y_\tau - \sigma(y_\tau)\left(\frac{u}{\sigma(u)}\right)^{t_\tau} = y_\tau - \sigma(y_\tau) +\sigma(y_\tau)\left(1 - \left( \frac{u}{\sigma(u)}\right)^{t_\tau}\right)
	$$ 
	is divisible by $u^{\delta_\tau + ep/(p-1)}$ because $y_\tau$ is divisible by $(u^{\delta_\tau})_\tau$ (here $\delta_\tau$ is the integer defined in \ref{gls}). Therefore we just need that $\delta_\tau -s_\tau + ep/(p-1) \geq (e+p-1)/(p-1)$. This inequality follows from the observation that $(e+p-1)/(p-1) - ep/(p-1) = -e+1$ and
	$$
	\delta_\tau - s_\tau = \begin{cases}
		a_\tau+1-s_\tau & \text{if $t_\tau <a_\tau+1$, in which case $a_\tau+1 - s_\tau = t_\tau -e+1 \geq -e+1$;} \\
		- s_\tau & \text{if $t_\tau \geq a_\tau+1$, in which case $-s_\tau = t_\tau -a_\tau -e \geq -e+1$.}
	\end{cases}
	$$
	This proves the theorem under the assumption that $(s_\tau,t_\tau) \neq (0,e+p-1)$ for at least one $\tau$.
	
	We conclude by addressing the case where $(s_\tau,t_\tau) = (0,e+p-1)$ for every $\tau$. This case is particularly simple because we can choose a basis $\beta$ of $\mathfrak{M}$ so that $\varphi(\beta) = \beta C$ with $C = \left( \begin{smallmatrix}
		x_1 u^{e+p-1} & 0 \\ 0 & x_2
	\end{smallmatrix}\right)$. Indeed, a priori, we have $C= \left( \begin{smallmatrix}
		x_1 u^{e+p-1} & y \\ 0 & x_2
	\end{smallmatrix}\right)$. Via a change of basis, we can replace $C$ by 
	$$\begin{pmatrix}
		1 & -x \\ 0 & 1
	\end{pmatrix} \begin{pmatrix}
		x_1 u^{e+p-1} & y \\ 0 & x_2
	\end{pmatrix}\begin{pmatrix}
		1 & \varphi(x) \\ 0 & 1
	\end{pmatrix} = \begin{pmatrix}
		x_1 u^{e+p-1} & y - x x_2 + \varphi(x)x_1 u^{e+p-1} \\ 0 & x_2
	\end{pmatrix}
	$$
	for any $x \in k[[u]] \otimes_{\mathbb{F}_p} \mathbb{F}$. If $y_0 = yx_2^{-1}$ and $y_{i+1}x_2 = \varphi(y_i)x_1u^{e+p-1}$, then $x= \sum_{i \geq 0} y_i$ converges in $k[[u]] \otimes_{\mathbb{F}_p} \mathbb{F}$ and satisfies $y - xx_2 +\varphi(x)x_1u^{e+p-1}$. Therefore, we can assume $C$ is diagonal.
	
	Next we show that $\mathfrak{M} \otimes_{k[[u]]} C^\flat$ can admit at most one $G_K$-action as in the theorem. Using condition (D2) and the calculations from the proof of Proposition~\ref{gls} we deduce that $\overline{r}^\vee = (\mathfrak{M} \otimes_{k[[u]]} C^\flat)^{\varphi=1}$ is generated by $\alpha = \beta D$ for $D = \left( \begin{smallmatrix}
		\widetilde{x}_1v^{-\Omega} & 0 \\ 0 & \widetilde{x}_2
	\end{smallmatrix}\right)$ with $\Omega = (p+e-1)(1+\ldots+p^{f-1})$ and $\widetilde{x}_i= \varphi(\widetilde{x}_i)x_i$. Furthermore, for $\sigma \in G_{K_\infty}$, we have $\sigma(\alpha) = \alpha \left( \begin{smallmatrix}
		\psi_1(\sigma) \prod_\tau \omega_{\tau}(\sigma)^{-(e+p-1)} & 0 \\ 0 & \psi_2(\sigma)
	\end{smallmatrix}\right)$ with $\psi_i$ defined by $\sigma(\widetilde{x}_i)  = \widetilde{x}_i \psi_i(\sigma)$. It follows that the $G_K$-action on $\alpha$ must be of the form $\sigma(\alpha) = \alpha \left( \begin{smallmatrix}
		\psi_1(\sigma)\prod_\tau \omega_{\tau}(\sigma)^{-(e+p-1)} & c(\sigma) \\ 0 & \psi_2(\sigma)
	\end{smallmatrix}\right)$ for $c:G_K \rightarrow \mathbb{F}$ a cocycle vanishing on $G_{K_\infty}$. (In fact, such non-zero cocycles occur only in very specific situations by \cite[5.4.2]{gls15}.) But then
	\begin{align*}
	\sigma(\beta) &= \beta \begin{pmatrix}
		\widetilde{x}_1 v^{-\Omega} & 0 \\ 0 & \widetilde{x}_2
	\end{pmatrix} \begin{pmatrix}
		\psi_1(\sigma) \prod_\tau \omega_{\tau}(\sigma)^{-(e+p-1)} & c \\ 0 & \psi_2(\sigma)
	\end{pmatrix} \begin{pmatrix}
		\sigma(\widetilde{x}_1)^{-1}\sigma(v)^{\Omega} & 0 \\ 0 & \sigma(\widetilde{x}_2)^{-1}
	\end{pmatrix} \\
	&= \beta \begin{pmatrix}
		\left( \frac{\sigma(v)}{v}\right)^{\Omega} \prod_\tau \omega_{\tau}(\sigma)^{-(e+p-1)} & \widetilde{x}_1 \sigma(\widetilde{x}_2)^{-1}v^{-\Omega} c \\0 & 1
	\end{pmatrix}.
	\end{align*}
	Clearly, this $G_K$-action only satisfies condition (D1) if $c=0$, which proves uniqueness. To finish the proof we just have to show that the limit formula in the theorem converges to a $G_K$-action as claimed -- this $G_K$-action will then coincide with that given in the previous formula when $c=0$. But this is clear because
	$$
	C\varphi(C) \ldots \varphi^{n}(C) \varphi^{n}(\sigma(C^{-1})) \ldots \varphi(\sigma(C^{-1})) \sigma(C^{-1}) - 1 = \begin{pmatrix}
		\left( \frac{u }{\sigma(u)} \right)^{(e+p-1)(1+\ldots+p^n)} -1 & 0 \\ 0 & 0
	\end{pmatrix}
	$$
	converges and the limit is an element of $u^{(e+p-1)/(p-1)}\operatorname{Mat}(\mathcal{O}_{C^\flat} \otimes_{\mathbb{F}_p} \mathbb{F})$ as $n\rightarrow \infty$, since $\left( \frac{u}{\sigma(u)} \right) -1 \in u^{(e+p-1)/(p-1)}\mathcal{O}_{C^\flat}$.
\end{proof}
\begin{cor}\label{ASfinal}
	Suppose $p>2$ and $\sigma_{a,0} \in W^{\operatorname{cr}}(\overline{r})$ for $\overline{r} \cong \left( \begin{smallmatrix}
		\chi_1 & c \\ 0 & \chi_2
	\end{smallmatrix}\right)$. If $a_\tau = p-1$ for each $\tau$, then assume $\chi= \chi_1/\chi_2$ is not an unramified twist of the cyclotomic character. Then $\sigma_{a,0} \in W^{\operatorname{exp}}(\overline{r})$.
\end{cor}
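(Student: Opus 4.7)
The plan is to verify the two defining conditions of $W^{\operatorname{exp}}(\overline{r})$. Condition (1), $\sigma_{a,0} \in W^{\operatorname{exp}}(\overline{r}^{\operatorname{ss}})$, is exactly Corollary~\ref{crimpliesexpinsemisimplecase}, so the main task reduces to showing that $c|_{G_L}$ lies in $\Psi_{\sigma}(\chi_1,\chi_2)$. The strategy is to read off $c$ from the Breuil--Kisin module attached to a crystalline lift of $\overline{r}$, apply Theorem~\ref{galois} to describe the full $G_K$-action, and then use Abrashkin's dictionary (Corollary~\ref{Abrcor}) to identify the resulting ``$G_K$-Artin--Schreier'' cocycle with a Kummer cocycle of the required form.

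Choose a crystalline lift $r$ of Hodge type $\sigma_{a,0}$, and let $\mathfrak{M}$ be the associated Breuil--Kisin module from Proposition~\ref{crys}. Fix a basis $\beta$ with $\varphi(\beta) = \beta C$ as in Proposition~\ref{gls}, and set $\alpha = \beta D$ with $D$ as in \eqref{D}, so $\alpha$ generates $\overline{r}^\vee$. Before invoking Theorem~\ref{galois} I must rule out the excluded case $(s_\tau,t_\tau) = (e+p-1,0)$ for all $\tau$: if this held, then $s_\tau + t_\tau = a_\tau + e$ would force $a_\tau = p-1$ for every $\tau$, and the formula \eqref{diagonalofr} would give $\chi|_{I_K} = \prod_\tau \omega_\tau^{e+p-1} = \chi_{\operatorname{cyc}}|_{I_K}$ (using $\prod_\tau\omega_\tau^{p-1}=1$), contradicting the hypothesis on $\chi$ in the degenerate case. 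Hence Theorem~\ref{galois} applies, and by its uniqueness assertion the $G_K$-action of Proposition~\ref{crys} coincides with the limit formula for $C_\sigma$. Using $C = D\varphi(D^{-1})$, the product defining $C_\sigma$ telescopes, giving
$$
D^{-1}C_\sigma\sigma(D) = \lim_{n\to\infty}\varphi^{n+1}(D^{-1}\sigma(D)),
$$
so $\sigma(\alpha)$ equals $\alpha$ times this limit.

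Extracting the upper-right entry produces a cocycle representing the class of $c$ in $H^1(G_K,\overline{\mathbb{F}}_p(\chi))$. Restricted to $G_L$, the diagonal characters $\sigma(d_i)/d_i$ become trivial (since $G_L$ fixes both $v$ and the Teichm\"uller lift of $l$ inside $\mathcal{O}_{C^\flat}$), so the cocycle becomes $\sigma \mapsto \lim_{n\to\infty}\varphi^{n+1}(\sigma(d') - d')$ after absorbing the unramified unit $d_2/d_1$ into a rescaling $d' := (d_2/d_1)d$; the resulting unramified twist is harmless by Lemma~\ref{unram}. Setting $h := \varphi(d') - d'$, Corollary~\ref{Abrcor}(1) identifies $c|_{G_L}$ with $\Psi(h)$.

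The final step is to verify that $h$ lies in the subspace of Definition~\ref{subspacedefn} for some pair $(J,x)$. Take $J = \{\tau : t_\tau < a_\tau+1\}$ with $x_\tau = s_\tau - a_\tau - 1$ for $\tau \in J$ and $x_\tau = s_\tau$ otherwise, mirroring the pairing used in the proof of Corollary~\ref{crimpliesexpinsemisimplecase}. A direct computation shows $\Omega_{\tau,\sigma,J,x} = \Omega_{\tau,t} - \Omega_{\tau,s}$, and propagating the divisibility $y_\tau \in u^{\delta_\tau}\overline{\mathbb{F}}_p[[u]]$ of Proposition~\ref{gls} through the identity $\varphi(d) - d = -(d_2/(d_1 x_2))(u^{-s_\tau} y_\tau)$ places $h$ inside $(v^{\Omega_{\tau,\sigma,J,x} - (p^f-1)x_\tau})_\tau l[[u]] \otimes_{\mathbb{F}_p} \overline{\mathbb{F}}_p$, witnessing $c|_{G_L} \in \Psi_{\sigma,J,x} \subset \Psi_\sigma(\chi_1,\chi_2)$. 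The main obstacle is this last exponent bookkeeping: one must track $v$-adic valuations through the rescaling, the Frobenius iterations, and the inhomogeneous indexing of $\Omega_{\tau,*}$ over embeddings, and check that the exclusion of the degenerate case ensures the strict inequality $\Omega_{\tau,\sigma,J,x} > -(e+p-1)(p^f-1)/(p-1)$ needed to keep $h$ in the domain $u^{-ep/(p-1)}l[[v]] \otimes \overline{\mathbb{F}}_p$ of $\Psi$.
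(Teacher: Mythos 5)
Your overall architecture matches the paper's: reduce to checking $c|_{G_L}\in\Psi_\sigma(\chi_1,\chi_2)$ via Corollary~\ref{crimpliesexpinsemisimplecase}, exclude the case $(s_\tau,t_\tau)=(e+p-1,0)$ for all $\tau$ exactly as you do, apply Theorem~\ref{galois} together with its uniqueness assertion, and feed $\varphi(d)-d=-\tfrac{d_2}{d_1x_2}(u^{-s_\tau}y_\tau)$ into Corollary~\ref{Abrcor} with the pair $(J,x)$ you name; the final bookkeeping $\Omega_{\tau,t-s}=\Omega_{\tau,\sigma,J,x}$ and $\delta_\tau-s_\tau=-x_\tau$ is also how the paper concludes.

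However, there is a genuine gap at the step where you pass from the upper-right entry of $D^{-1}\sigma(D)$, namely $\sigma(d)\tfrac{\sigma(d_1)}{d_1}-d\,\tfrac{\sigma(d_2)}{d_2}$, to $\sigma(d)-d$ modulo $\mathfrak{m}_{C^\flat}$ for $\sigma\in G_L$. Your justification --- that $G_L$ fixes $v$ and the Teichm\"uller lifts, so the diagonal ratios become trivial --- is false as stated: $v=(\pi^{1/(p^f-1)},\pi^{1/p(p^f-1)},\dots)$ involves all $p$-power roots, so $\sigma(v)/v$ is a nontrivial $p$-power root of unity in $\mathcal{O}_{C^\flat}$, and one only gets $\sigma(d_i)/d_i\equiv 1$ modulo $u^{ep/(p-1)}\mathcal{O}_{C^\flat}$. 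More importantly, even granting that congruence, the term $d\left(\sigma(d_2)/d_2-1\right)$ lies in $\mathfrak{m}_{C^\flat}$ only if one controls the pole of $d$: one needs $d\in u^{-(e+p-1)/(p-1)}\mathfrak{m}_{C^\flat}\otimes_{\mathbb{F}_p}\mathbb{F}$ (or at least $d\in u^{-ep/(p-1)}\mathfrak{m}_{C^\flat}$). This is not automatic, and the paper devotes a substantial argument to it, proving $\overline{r}^\vee=(\mathfrak{M}\otimes_{k[[u]]}C^\flat)^{\varphi=1}\subset\mathfrak{M}\otimes_{k[[u]]}u^{-(e+p-1)/(p-1)}\mathfrak{m}_{C^\flat}$ by a valuation recursion that uses the inclusion $(u^{q_\tau})\mathfrak{M}\subset\mathfrak{M}^\varphi$ with not all $q_\tau$ equal to $e+p-1$. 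Your proposal omits this bound entirely; without it the congruence identifying the cocycle, and hence the application of Corollary~\ref{Abrcor}, does not go through. (Separately, the ``rescaling $d'=(d_2/d_1)d$'' is unnecessary and does not interact well with $\varphi$, since $d_2/d_1$ is not $\varphi$-invariant; the paper works directly with $d$.)
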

\begin{proof}
	As in Corollary~\ref{crimpliesexpinsemisimplecase} choose a finite extension $E/\mathbb{Q}_p$, with integers $\mathcal{O}$ and residue field $\mathbb{F}$, so that a crystalline $\overline{\mathbb{Z}}_p$-representation $r$ witnessing $\sigma_{a,0} \in W(\overline{r})$ is defined over $\mathcal{O}$. After possibly enlarging $E$ we can apply Theorem~\ref{crys} to $r$ to produce a Breuil--Kisin module $\mathfrak{M}$ with shape as in Proposition~\ref{gls}. 
	
	We want to apply Theorem~\ref{galois} to $\mathfrak{M}$. Let $s_\tau, t_\tau, y_\tau, \delta_\tau$ and $x_i$ be as in Proposition~\ref{gls}. We need to show that 
	$$
	(s_\tau,t_\tau) \neq (e+p-1,0)
	$$
	for at least one $\tau$. If not, then the identity $s_\tau + t_\tau = a_\tau+e$ would imply $a_\tau =p-1$ for every $\tau$. From \eqref{diagonalofr} it would also follow that $\chi|_{I_K} = \prod_\tau \omega_\tau^{e+p-1}$. However, $\chi_{\operatorname{cyc}}|_{I_K} = \prod_\tau \omega_\tau^{e+p-1}$, so $\chi$ would be an unramified twist of the cyclotomic character. Since we've assumed this isn't the case we can apply Theorem~\ref{galois} to produce a $G_K$-action on $\mathfrak{M} \otimes_{k[[u]]} C^\flat$. By uniqueness, this $G_K$-action coincides with that induced from the $G_K$-action on $\overline{r}^\vee$.
	
	In the discussion after the proof of Proposition~\ref{gls} we described a basis $\alpha = \beta D$ of $\overline{r}^\vee = (\mathfrak{M}~\otimes_{k[[u]]}~C^\flat)^{\varphi=1}$ with $D = \left( \begin{smallmatrix}
		d_1 & dd_1 \\ 0 & d_2
	\end{smallmatrix} \right)$ for $d_1 = \widetilde{x}_1(v^{-\Omega_{\tau,t}})_\tau$, $d_2 = \widetilde{x}_2(v^{-\Omega_{\tau, s}})_\tau$ and
	$$
	\varphi(d) - d = -\frac{d_2}{d_1x_2} ( u^{-s_\tau}y_\tau)_\tau.
	$$
	Since $x_2 \in (k\otimes_{\mathbb{F}_p} \mathbb{F})^\times$ and $\widetilde{x}_i \in (l\otimes_{\mathbb{F}_p} \mathbb{F})^\times$, it follows that $\varphi(d)-d \in (v^{\Omega_{\tau, t-s} + (p^f-1)(\delta_\tau- s_\tau)})_\tau l[[u]] \otimes_{\mathbb{F}_p} \mathbb{F}$. Using the description of the $G_K$-action from Theorem~\ref{galois} we will compute $\sigma(\alpha)$ for $\sigma \in G_K$. Since $\alpha = \varphi(\alpha) = \varphi(\beta)\varphi(D)$, we have $\sigma_{\operatorname{naive},\beta}(\alpha) = \alpha \varphi(D^{-1}\sigma(D))$ and
	$$
	\begin{aligned}
		\varphi^n \circ \sigma_{\operatorname{naive},\beta} \circ \varphi^{-n}(\beta) &= \varphi^n \circ \sigma_{\operatorname{naive},\beta} \circ \varphi^{-n}(\alpha D^{-1}) \\ &=  \alpha \varphi^{n+1}\left( D^{-1} \sigma(D) \right) \sigma(D^{-1}) \\ &= \beta D \varphi^{n+1}\left( D^{-1} \sigma(D) \right) \sigma(D^{-1}).
	\end{aligned}
	$$
	It follows from Theorem~\ref{galois}that $\operatorname{lim}_{n\rightarrow \infty} \varphi^n(D^{-1}\sigma(D))$ converges to a matrix $D_\sigma$ with entries in $\mathbb{F}$ and $\sigma(\alpha) = \alpha D_\sigma$. Write $\overline{x}$ for the image of $x$ under the reduction map $\mathcal{O}_{C^\flat} \otimes_{\mathbb{F}_p} \mathbb{F} \rightarrow \overline{k} \otimes_{\mathbb{F}_p} \mathbb{F}$. Observe that, for $x \in C^\flat \otimes_{\mathbb{F}_p} \mathbb{F}$, convergence of $\varphi^n(x)$ is equivalent to asking that $x \in \mathcal{O}_{C^\flat} \otimes_{\mathbb{F}_p} \mathbb{F}$ and $\overline{x} \in \mathbb{F}$. Furthermore, $\operatorname{lim}_{n\rightarrow \infty} \varphi^n(x)$ equals the image of $\overline{x}$ under the multiplicative section of this reduction map. Therefore, $D^{-1}\sigma(D) \in \operatorname{Mat}(\mathcal{O}_{C^\flat} \otimes_{\mathbb{F}_p} \mathbb{F})$ and so
	$$
	D_\sigma \equiv \begin{pmatrix}
		\frac{\sigma(d_1)}{d_1} & \sigma(d)\frac{\sigma(d_1)}{d_1}  - d \frac{\sigma(d_2)}{d_2}\\ 0 & \frac{\sigma(d_2)}{d_2}
	\end{pmatrix}	 \text{ modulo }\mathfrak{m}_{C^\flat} \otimes_{\mathbb{F}_p} \mathbb{F}.
	$$ 
	The next step is to give a simpler formula for $D_\sigma$ when $\sigma \in G_L$. In fact, in this case we claim that
	\begin{equation}\label{dsigma}
		D_\sigma  \equiv \begin{pmatrix}
			1 & \sigma(d) - d  \\ 0 & 1
		\end{pmatrix}  \text{ modulo } \mathfrak{m}_{C^\flat} \otimes_{\mathbb{F}_p} \overline{\mathbb{F}}_p.
	\end{equation}
	To show this it suffices to show that $\sigma(d_i)d_i^{-1} \equiv 1$ modulo $u^{ep/(p-1)} \mathcal{O}_{C^\flat}\otimes_{\mathbb{F}_p} \mathbb{F}$ for $\sigma  \in G_L$ and that $d \in u^{-(e+p-1)/(p-1)}\mathfrak{m}_{C^\flat} \otimes_{\mathbb{F}_p} \mathbb{F}$. Clearly the first claim implies the required congruences on the diagonal. When combined with the second claim it also implies that $\sigma(d)\frac{\sigma(d_1)}{d_1}  - d \frac{\sigma(d_2)}{d_2} \equiv \sigma(d) - d$ modulo $\mathfrak{m}_{C^\flat} \otimes_{\mathbb{F}_p} \mathbb{F}$, which establishes \eqref{dsigma}
	
	For the first claim note: if $\sigma \in G_L$, then $\frac{\sigma(d_i)}{d_i} -1$ is divisible by $\epsilon-1$, which we've already seen generates the ideal $u^{ep/(p-1)}\mathcal{O}_{C^\flat}$. For the second claim we show that $\overline{r}^\vee =(\mathfrak{M} \otimes_{k[[u]]} C^\flat)^{\varphi=1}$ is contained in $\mathfrak{M} \otimes_{k[[u]]} u^{-(e+p-1)/(p-1)}\mathfrak{m}_{C^\flat}$. This will imply that $d_1d \in u^{-(e+p-1)/(p-1)}\mathfrak{m}_{C^\flat} \otimes_{\mathbb{F}_p} \mathbb{F}$. Since $d_1$ is a unit multiple of $(v^{-\Omega_{\tau,t}})_\tau$ and $\Omega_{\tau,t} \geq 0$, it will follow that $d \in u^{-(e+p-1)/(p-1)}\mathfrak{m}_{C^\flat} \otimes_{\mathbb{F}_p} \mathbb{F} $ as desired. Take $x \in \overline{r}^\vee$. Then $x = (u^{n_\kappa})m$ for some $m \in\mathfrak{M} \otimes_{k[[u]]} \mathcal{O}_{C^\flat}$ and some $n_\kappa \in \mathbb{Q}$. Assume the $n_\kappa$ are maximal. Since $\varphi(x)=x$ it follows that $(u^{n_{\kappa}})m = (u^{pn_{\kappa\circ \varphi}})\varphi(m)$ and so $\varphi(m) = (u^{n_\kappa - pn_{\kappa \circ \varphi}})m$. Recall there exists $q_\tau \leq e+p-1$ with at least one inequality strict and $(u^{q_\kappa})\mathfrak{M}\subset \mathfrak{M}^\varphi$ (where $\mathfrak{M}^\varphi$ denotes the image of the linearised Frobenius). Therefore
	$$
	\varphi(m) = (u^{n_\kappa - pn_{\kappa \circ \varphi} - q_\kappa}) \varphi(m') = \varphi((u^{(n_\kappa - pn_{\kappa \circ \varphi} -q_\kappa)/p})m')
	$$
	for some $m' \in \mathfrak{M} \otimes_{k[[u]]} \mathcal{O}_{C^\flat}$. Injectivity of $\varphi$ ensures that $m \in (u^{(n_\kappa - pn_{\kappa \circ \varphi} -q_\kappa)/p}) \mathfrak{M} \otimes_{k[[u]]} \mathcal{O}_{C^\flat}$. Therefore,
	$$
	x \in (u^{n_\kappa + p^{-1}(n_\kappa - pn_{\kappa \circ \varphi} -q_\kappa)}) \mathfrak{M} \otimes_{k[[u]]} \mathcal{O}_{C^\flat}
	$$
	and so 
	$$
	n_\kappa - pn_{\kappa \circ \varphi} \leq q_\kappa,
	$$
	for all $\kappa$, since otherwise we contradict the maximality of the $n_\kappa$. Therefore,
	$$
	\begin{aligned}
		(1-p^{f})n_\kappa &= n_\kappa - pn_{\kappa \circ \varphi} + p(n_{\kappa \circ \varphi} - p n_{\kappa \circ \varphi^2}) + \ldots + p^{f-1}(n_{\kappa \circ \varphi^{f-1}} - p n_{\kappa}) \\
		&\leq q_\kappa + pq_{\kappa \circ \varphi} +\ldots + p^{f-1}q_{\kappa \circ \varphi^{f-1}} \\
		&< (e+p-1)\left( 1 + p+\ldots+ p^{f-1} \right) \\
		&= (e+p-1)\left( \frac{p^{f}-1}{p-1} \right).
	\end{aligned}
	$$
	We conclude that $n_\kappa > -(e+p-1)/(p-1)$ and so $x \in \mathfrak{M} \otimes_{k[[u]]} u^{-(e+p-1)/(p-1)}  \mathfrak{m}_{C^\flat}$ as desired. 
	
	Since $\sigma_{a,0} \in W^{\operatorname{exp}}(\overline{r}^{\operatorname{ss}})$ by Corollary~\ref{crimpliesexpinsemisimplecase}, we only have to show that the homomorphism $G_L \rightarrow \mathbb{F}$
	$$
	c: \sigma \mapsto \sigma(d) -d \text{ modulo } \mathfrak{m}_{C^\flat} \otimes_{\mathbb{F}_p} \mathbb{F}.
	$$
	is contained in $\Psi_{\sigma}(\chi_1,\chi_2)$ -- we know already that this homomorphism is obtained as the restriction of a cocycle in $G_K \rightarrow \mathbb{F}(\chi)$ and so $\operatorname{Gal}(L/K)$ acts by $\chi^{-1}$. By part (1) of Corollary~\ref{Abrcor}  we have
	$$
	c \in \Psi\left( (v^{\Omega_{\tau, t-s} + (p^f-1)(\delta_\tau- s_\tau)})_\tau l[[u]] \otimes_{\mathbb{F}_p} \mathbb{F}  \right).
	$$
	As in Corollary~\ref{crimpliesexpinsemisimplecase} take $J = \lbrace \tau \in \operatorname{Hom}_{\mathbb{F}_p}(k,\overline{\mathbb{F}}_p) \mid t_\tau < a_\tau+1 \rbrace$ and set $x_\tau = s_\tau -a_\tau-1$ for $\tau \in J$ and $x_\tau = s_\tau$ for $\tau \not\in J$. Then $x_\tau \in [0,e-1]$ and
	\begin{equation}\label{omegatauinsandt}
		\Omega_{\tau,t-s}  = \sum_{i=0}^{f-1} p^i\left( a_{\tau \circ \varphi^i} + e- 2s_{\tau \circ \varphi^i}\right) = \sum_{i=0}^{f-1}p^i (a_{\tau \circ \varphi^i}+1)(-1)^{\tau \circ \varphi^i \in J} + \sum_{i=0}^{f-1} p^i (e -1 -2 x_{\tau \circ \varphi^i}),	
	\end{equation}
	where $(-1)^{\tau \in J}$ equals $-1$ for $\tau \in J$ and $1$ otherwise. Notice this is exactly $\Omega_{\tau,\sigma,J,x}$ from Definition~\ref{subspacedefn}. Since $\delta_\tau -s_\tau = -x_\tau$, we conclude that $c \in \Psi_{\sigma,J,x} \subset H^1(G_L,\overline{\mathbb{F}}_p)$ for $\Psi_{\sigma,J,x}$ defined in Definition~\ref{subspacedefn}. We saw in the proof of Corollary~\ref{crimpliesexpinsemisimplecase} that
	$$
	\overline{r}^{\operatorname{ss}} \cong  \begin{pmatrix}
		\prod_{\tau \in J} \omega_{\tau}^{a_\tau + 1 + x_{\tau}} \prod_{\tau \not\in J} \omega_{\tau}^{x_{\tau}} & 0 \\ 
		0 & \prod_{\tau \not\in J} \omega_{\tau}^{a_\tau + e - x_{\tau}} \prod_{\tau \in J} \omega_{\tau}^{e-1 - x_{\tau}}
	\end{pmatrix}.
	$$ Therefore $c \in \Psi_{\sigma_{a,0}}(\chi_1,\chi_2)$ and so $\sigma_{a,0} \in W^{\operatorname{exp}}(\overline{r})$.
\end{proof}
\section{Proof of the main theorem}\label{finishproof}

We are now ready to put together the proof of Theorem~\ref{main}. As usual write $\overline{r} = \left( \begin{smallmatrix}
	\chi_1 & c \\ 0 & \chi_2
\end{smallmatrix}\right)$ and set $\chi = \chi_1/\chi_2$. First, the following lemma shows that it suffices to prove $\sigma_{a,0} \in W^{\operatorname{exp}}(\overline{r})$ if and only if $\sigma_{a,0} \in W^{\operatorname{cr}}(\overline{r})$ for any $\overline{r}$.
\begin{lem}\label{lem-twist}
	For $* \in \lbrace \operatorname{cr},\operatorname{exp} \rbrace$, we have $\sigma_{a-b,0} \in W^{*}(\overline{r})$ if and only if $\sigma_{a,b} \in W^*(\overline{r} \otimes \prod_\tau \omega_\tau^{ b\tau})$.
\end{lem}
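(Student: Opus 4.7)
The plan is to treat $*=\operatorname{cr}$ and $*=\operatorname{exp}$ separately. In both cases the key observation is that writing $\psi := \prod_\tau \omega_\tau^{b_\tau}$, twisting by $\psi$ shifts every exponent of $\omega_\tau$ in $\overline{r}^{\operatorname{ss}}|_{I_K}$ uniformly by $b_\tau$, and this exactly matches the shift from $\sigma_{a-b,0}$ to $\sigma_{a,b}$.

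For $*=\operatorname{cr}$, I would first produce a crystalline character $\widetilde{\psi}: G_K \to \overline{\mathbb{Z}}_p^\times$ lifting $\psi$ whose Hodge--Tate weights are $b_\tau$ at $\tau_0$ and $0$ at $\tau_i$ for $i \geq 1$ (under the indexing of \eqref{index}). Such a lift exists by standard constructions, for instance as a product over $\tau$ of fundamental crystalline characters. The map $V \mapsto V \otimes_{\overline{\mathbb{Z}}_p} \widetilde{\psi}$ is then a bijection between crystalline $\overline{\mathbb{Z}}_p$-lifts of $\overline{r}$ of Hodge type $\sigma_{a-b,0}$ and crystalline lifts of $\overline{r} \otimes \psi$ of Hodge type $\sigma_{a,b}$, since Hodge--Tate weights add under tensor product and the target pair $(a_\tau+1,b_\tau)$ at $\tau_0$, with $(1,0)$ unchanged at the $\tau_i$ for $i\geq 1$, is exactly what is required by Definition~\ref{HTwt}.

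For $*=\operatorname{exp}$, I unpack definitions. For the semisimple condition (Definition~\ref{exp}), multiplying each side of the prescribed identity for $\overline{r}^{\operatorname{ss}}|_{I_K}$ by $\psi|_{I_K}$ turns a witness $(J,x)$ for $\sigma_{a-b,0} \in W^{\operatorname{exp}}(\overline{r}^{\operatorname{ss}})$ into a witness for $\sigma_{a,b} \in W^{\operatorname{exp}}((\overline{r}\otimes\psi)^{\operatorname{ss}})$ with the \emph{same} pair $(J,x)$, and the reducible and irreducible cases transport identically. For the extension-class condition when $\overline{r}$ is non-split, the cocycle $c/\chi_2$ representing the extension class inside $H^1(G_K,\overline{\mathbb{F}}_p(\chi_1/\chi_2))$ is manifestly unchanged when passing from $\overline{r}$ to $\overline{r}\otimes\psi$, since both the off-diagonal entry and $\chi_2$ get multiplied by the same scalar $\psi$. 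Inspection of Definition~\ref{subspacedefn} shows that $\Omega_{\tau,\sigma,J,x}$ depends only on the differences $a_\tau-b_\tau$, so $\Omega_{\tau,\sigma_{a-b,0},J,x}=\Omega_{\tau,\sigma_{a,b},J,x}$; combined with the matching of witnessing pairs, this yields $\Psi_{\sigma_{a-b,0}}(\chi_1,\chi_2)=\Psi_{\sigma_{a,b}}(\chi_1\psi,\chi_2\psi)$, which is what the extension-class condition requires.

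No serious obstacle is anticipated: once the crystalline character $\widetilde{\psi}$ is produced on the crystalline side, the rest is a bookkeeping verification, and the invariance of both the set of witnessing pairs $(J,x)$ and of the integer $\Omega_{\tau,\sigma,J,x}$ under the substitution $(a,b) \rightsquigarrow (a-b,0)$ is built directly into the relevant formulas.
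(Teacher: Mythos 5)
Your proposal is correct and follows essentially the same route as the paper: the crystalline case is handled by the identical twist by a crystalline character lifting $\prod_\tau\omega_\tau^{b_\tau}$ with Hodge--Tate weight $b_\tau$ at $\tau_0$ and $0$ elsewhere, and the $\operatorname{exp}$ case (which the paper dismisses as immediate from the definitions) is verified by exactly the bookkeeping you carry out, using that $\Omega_{\tau,\sigma,J,x}$ depends only on $a_\tau-b_\tau$ and that the extension class and the witnessing pairs $(J,x)$ are unchanged under the twist.
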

\begin{proof}
	For $* =\operatorname{exp}$, this follows immediately from the definitions. For $* = \operatorname{cr}$, choose a labelling $\tau_0,\ldots,\tau_{e-1}$ of $\lbrace \kappa \in \operatorname{Hom}_{\mathbb{Q}_p}(K,\overline{\mathbb{Q}}_p) \mid \kappa|_k = \tau \rbrace$ as in Definition~\ref{exp}. It is well known that there exists a crystalline character $\widetilde{\chi}:G_K \rightarrow \overline{\mathbb{Z}}_p^\times$ whose reduction modulo $\mathfrak{m}_{\overline{\mathbb{Z}}_p}$ is $\prod_\tau \omega_\tau^{b_\tau}$ and with 
	$$
	\operatorname{HT}_{\tau_i}(\widetilde{\chi}) = \begin{cases}
		\lbrace b_\tau \rbrace & \text{if $i =0$;} \\
		\lbrace 0 \rbrace & \text{if $i = 1,\ldots,e-1$.}
	\end{cases}
	$$
	Thus, if $r$ is a crystalline lift witnessing $\sigma_{a-b,0} \in W^{\operatorname{cr}}(\overline{r})$, then $r \otimes \widetilde{\chi}$ is a crystalline lift witnessing $\sigma_{a,b} \in W^*(\overline{r} \otimes \prod_\tau \omega_\tau^{ b\tau})$.
\end{proof}

Next we consider the degenerate situations from Lemma~\ref{lem-degen} and Corollary~\ref{cor-degen}. 

\begin{lem}
	If $\overline{r}= \psi \otimes \left( \begin{smallmatrix}
		\chi_{\operatorname{cyc}} & c \\0 & 1
	\end{smallmatrix} \right)$ for an unramified character $\psi$ and $\sigma = \sigma_{a,0}$ with $a_\tau =p-1$ for every $\tau$, then $\sigma \in W^{\operatorname{exp}}(\overline{r})$ if and only if $\sigma \in W^{\operatorname{cr}}(\overline{r})$.
\end{lem}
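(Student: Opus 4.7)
The backward implication is immediate from Corollary~\ref{cor-degen}, which asserts $\sigma \in W^{\operatorname{exp}}(\overline{r})$ unconditionally in the setting of the lemma. Since the right-hand side of the desired equivalence is automatic, the substantive content is the forward implication: for every extension class $c$, one must produce a crystalline lift of $\overline{r}$ of Hodge type $\sigma_{a,0}$.

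I would first reduce to $\psi = 1$ by twisting both $\overline{r}$ and any prospective lift by the Teichm\"uller lift of $\psi^{-1}$, an unramified crystalline character with all Hodge--Tate weights zero that preserves both Hodge type and extension class. It then suffices to cover $H^1(G_K, \overline{\mathbb{F}}_p(\chi_{\operatorname{cyc}}))$ by reductions of crystalline lifts of $\overline{r}$, and the plan is to do this with two complementary families of lifts.

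The first family consists of ordinary upper-triangular crystalline representations $\left(\begin{smallmatrix}\eta_1 & * \\ 0 & \eta_2\end{smallmatrix}\right)$, where $\eta_1$ is a crystalline character lifting $\chi_{\operatorname{cyc}}$ with labelled Hodge--Tate weight $p$ at the distinguished embedding $\tau_0$ above each $\tau$ and $1$ at the remaining $\tau_i$, and $\eta_2 = 1$. Their extension classes sweep out the Bloch--Kato subspace $H^1_f(G_K, \eta_1)$ of dimension $[K:\mathbb{Q}_p] = ef$, whereas $\dim H^1(G_K, \overline{\mathbb{F}}_p(\chi_{\operatorname{cyc}}))$ equals $ef+1$ (or $ef+2$ if $\zeta_p \in K$) by local Euler characteristic. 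The missing dimensions are to be supplied by absolutely irreducible crystalline representations with the same Hodge type; such representations can be constructed by inducing a suitable crystalline character from the Galois group of the unramified quadratic extension of $K$, chosen so that the labelled Hodge--Tate weights of the induction match $\sigma_{a,0}$ and the reduction is an extension of $1$ by $\chi_{\operatorname{cyc}}$ with extension class outside the peu ramifi\'e subspace.

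The main obstacle will be the explicit computation of the reduction of the irreducible lift, and in particular verifying that it lies outside the peu ramifi\'e image. For this I would invoke the Breuil--Kisin analysis of the boundary shape $(s_\tau, t_\tau) = (e+p-1, 0)$ carried out at the end of the proof of Theorem~\ref{galois}: the $G_K$-action computed there, applied to the Breuil--Kisin module of the irreducible lift, yields the required non-peu-ramifi\'e extension class. Once both families are exhibited, dimension counting shows every $c \in H^1(G_K,\overline{\mathbb{F}}_p(\chi_{\operatorname{cyc}}))$ is realised as the extension class of a crystalline lift, so $\sigma \in W^{\operatorname{cr}}(\overline{r})$ and the two sets agree.
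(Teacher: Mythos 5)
Your reduction to producing crystalline lifts, and your first family of ordinary upper-triangular lifts, match the paper's strategy. But the second family is where the argument breaks, and it breaks irreparably: a crystalline representation induced from a character of $G_{K_2}$ (or of the Galois group of any quadratic extension $K'/K$, since $p>2$) has reduction isomorphic to $\operatorname{Ind}_{G_{K'}}^{G_K}\overline{\theta}$, which is either absolutely irreducible or a direct sum of two characters. It can never be a \emph{non-split} extension of $1$ by $\chi_{\operatorname{cyc}}$: such an extension would have to split upon restriction to $G_{K'}$, but restriction $H^1(G_K,\overline{\mathbb{F}}_p(\chi_{\operatorname{cyc}}))\rightarrow H^1(G_{K'},\overline{\mathbb{F}}_p(\chi_{\operatorname{cyc}}))$ is injective because the kernel is killed by $[K':K]=2$, which is prime to $p$. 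So the irreducible lifts cannot supply the missing (tr\`es ramifi\'ee) classes, and your two families together still only cover the image of the first one. The appeal to the end of the proof of Theorem~\ref{galois} is also misplaced: the shape $(s_\tau,t_\tau)=(e+p-1,0)$ for all $\tau$ is exactly the case \emph{excluded} by the hypotheses of that theorem (the case it does treat at the end is $(s_\tau,t_\tau)=(0,e+p-1)$, and its conclusion there is that the off-diagonal cocycle must vanish) — indeed the whole reason this lemma needs a separate argument is that Theorem~\ref{galois} and Corollary~\ref{ASfinal} do not apply here.

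The paper closes the gap differently, staying entirely within the ordinary family but varying the unramified twist. The key points are: (i) since the crystalline character $\widetilde{\psi}\widetilde{\chi}$ has all labelled Hodge--Tate weights $\geq 1$ and weight $p\geq 2$ at one embedding, it is not the cyclotomic character, so $H^1_f(G_K,\overline{\mathbb{Z}}_p(\widetilde{\psi}\widetilde{\chi}))$ equals the full $H^1(G_K,\overline{\mathbb{Z}}_p(\widetilde{\psi}\widetilde{\chi}))$ — every extension in the ordinary family is automatically crystalline of Hodge type $\sigma_{a,0}$, so there is no "Bloch--Kato codimension" to make up at the characteristic-zero level; (ii) the only deficiency is that the mod-$p$ reduction map $H^1(G_K,\overline{\mathbb{Z}}_p(\widetilde{\psi}\widetilde{\chi}))\rightarrow H^1(G_K,\overline{\mathbb{F}}_p(\psi\chi_{\operatorname{cyc}}))$ can fail to be surjective (its cokernel is controlled by $H^2(G_K,\overline{\mathbb{Z}}_p(\widetilde{\psi}\widetilde{\chi}))[p]$), and this is remedied by the standard argument that the union of the images over all unramified $\widetilde{\psi}\equiv 1$ is all of $H^1(G_K,\overline{\mathbb{F}}_p(\psi\chi_{\operatorname{cyc}}))$. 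Replacing your second family with this variation of $\widetilde{\psi}$ would repair the proof.
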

\begin{proof}
	Corollary~\ref{cor-degen} showed that $\sigma \in W^{\operatorname{exp}}(\overline{r})$ so we need to show $\sigma \in W^{\operatorname{cr}}(\overline{r})$ by producing a crystalline lift of $\overline{r}$ of Hodge type $\sigma_{a,0}$. Below we sketch the well known construction of such a lift following  \cite[9.4]{gls14} (which treats the unramified case) and \cite[5.2.9]{gls12} (which treats the totally ramified case). 
	
	Choose an indexing $\tau_0,\ldots,\tau_{e-1}$ of those embeddings $\kappa:K \rightarrow \overline{\mathbb{Q}}_p$ with $\kappa|_k = \tau$. Since $\chi_{\operatorname{cyc}}|_{I_K} = \prod_{\tau} \omega_{\tau}^{e+p-1}$, there exists a crystalline character $\widetilde{\chi}:G_K \rightarrow \overline{\mathbb{Z}}_p^\times$ lifting $\psi \chi_{\operatorname{cyc}}$ with 
	$$
	\operatorname{HT}_{\tau_i}(\widetilde{\chi}) = \begin{cases}
		\lbrace p \rbrace & \text{if $i =0$;} \\
		\lbrace 1 \rbrace & \text{if $i = 1,\ldots,e-1$.}
	\end{cases}
	$$
	For any unramified character $\widetilde{\psi}$ with $\widetilde{\psi} \equiv 1$ modulo $\mathfrak{m}_{\overline{\mathbb{Z}}_p}$, consider the Block--Kato subgroup 
	$$
	H^1_f(G_K,\overline{\mathbb{Z}}_p(\widetilde{\psi}\widetilde{\chi} )) \subset H^1(G_K,\overline{\mathbb{Z}}_p(\widetilde{\psi} \widetilde{\chi}))
	$$
	classifying crystalline extensions of $1$ by $\widetilde{\psi} \widetilde{\chi}$.  Any such extension has Hodge type $\sigma_{a,0}$, so we will be done if we can show that any class in $H^1(G_K, \overline{\mathbb{F}}_p(\psi \chi_{\operatorname{cyc}}))$ is contained in the image of the reduction map
	$$
	H^1_f(G_K,\overline{\mathbb{Z}}_p(\widetilde{\psi} \widetilde{\chi})) \rightarrow H^1(G_K,\overline{\mathbb{F}}_p(\psi \chi_{\operatorname{cyc}}) )
	$$
	for at least one $\widetilde{\psi}$. In fact, since every Hodge--Tate weight of $\widetilde{\chi}$ is $\geq 0$, one has $H^1_f(G_K,\overline{\mathbb{Z}}_p(\widetilde{\psi}\widetilde{\chi} )) = H^1(G_K,\overline{\mathbb{Z}}_p(\widetilde{\psi}\widetilde{\chi}))$. Therefore, this can be checked using standard techniques from Galois cohomology.
\end{proof}

The previous lemma allows us to assume that if $\sigma =\sigma_{a,0}$ with $a_\tau =p-1$ for every $\tau$, then $\overline{r} \neq \psi \otimes \left( \begin{smallmatrix}
	\chi_{\operatorname{cyc}} & c \\0 & 1
\end{smallmatrix} \right)$ for some unramified character. Therefore, Corollary~\ref{ASfinal} applies and we are left proving that $\sigma_{a,0} \in W^{\operatorname{exp}}(\overline{r})$ implies $\sigma_{a,0} \in W^{\operatorname{cr}}(\overline{r})$. To do this we again have to exhibit a crystalline lift $r$ of $\overline{r}$ of Hodge type $\sigma_{a,0}$ and we again produce this $r$ as an extension of two carefully chosen crystalline characters. Since $\sigma_{a,0} \in W^{\operatorname{exp}}(\overline{r})$, there is a maximal pair $(J_{\operatorname{max}},x_{\operatorname{max}})$ as in Proposition~\ref{proposition-maximal} so that
$$
\overline{r} = \begin{pmatrix} 
	\psi_1 \prod_{\tau \in J_{\operatorname{max}}} \omega_{\tau}^{a_\tau + 1 + x_{\operatorname{max},\tau}} \prod_{\tau \not\in J_{\operatorname{max}}} \omega_{\tau}^{ x_{\operatorname{max},\tau}} & c \\ 
	0 & \psi_2 \prod_{\tau \not\in J_{\operatorname{max}}} \omega_{\tau}^{a_\tau + e - x_{\operatorname{max},\tau}} \prod_{\tau \in J_{\operatorname{max}}} \omega_{\tau}^{ e-1 - x_{\operatorname{max},\tau}}
\end{pmatrix}
$$
and $c \in \Psi_{\sigma}(\chi_1,\chi_2)^{\operatorname{Gal}(L/K) = \chi^{-1}}$. To produce the crystalline lift of $\overline{r}$ choose an indexing $\tau_0,\ldots,\tau_{e-1}$ of those embeddings $\kappa:K\rightarrow \overline{\mathbb{Q}}_p$ with $\kappa|_{k} = \tau$. We consider crystalline extensions
$$
r = \begin{pmatrix}
	\widetilde{\chi}_1 & C \\ 0 & \widetilde{\chi}_2
\end{pmatrix}
$$
for crystalline characters  $\widetilde{\chi}_1$ and $\widetilde{\chi}_2$ with $\tau_0$-Hodge--Tate weights
$$
(\operatorname{HT}_{\tau_0}(\widetilde{\chi}_1), \operatorname{HT}_{\tau_0}(\widetilde{\chi}_2)) = \begin{cases}
	(a_\tau+1,0) & \text{ if $\tau \in J_{\operatorname{max}}$;} \\
	(0,a_\tau+1) & \text{ if $\tau \not\in J_{\operatorname{max}}$.}
\end{cases}
$$
For the other embeddings we require $(\operatorname{HT}_{\tau_j}(\widetilde{\chi}_1), \operatorname{HT}_{\tau_j}(\widetilde{\chi}_2))$ equal $(1,0)$ for $j =1,\ldots,x_{\operatorname{max},\tau}$ and equal $(0,1)$ for $j = x_{\operatorname{max},\tau}+1,\ldots,e-1$. Then we have $\widetilde{\chi}_1|_{I_K} \cong \prod_{\tau \in J_{\operatorname{max}}} \omega_{\tau}^{a_\tau + 1 + x_{\operatorname{max},\tau}} \prod_{\tau \not\in J_{\operatorname{max}}} \omega_{\tau}^{x_{\operatorname{max,\tau}}} \bmod{\mathfrak{m}_{\overline{\mathbb{Z}}_p}}$. Thus, replacing $\widetilde{\chi}_1$ by an unramified twist we can further assume $\widetilde{\chi}_1$ lifts $\chi_1$. Similarly, we can assume $\widetilde{\chi}_2$ lifts $\chi_2$.

Any such extension $r$ has Hodge type $\sigma_{a,0}$ and the cocycles $C$ defining such an extension are described by the Bloch--Kato subspace $H^1_f(G_K,\overline{\mathbb{Z}}_p(\widetilde{\chi}_1 \widetilde{\chi}_2^{-1})) \subset H^1(G_K,\overline{\mathbb{Z}}_p(\widetilde{\chi}_1 \widetilde{\chi}_2^{-1}))$. Let $Q'$ denote the image of this Bloch--Kato subspace under $H^1(G_K,\overline{\mathbb{Z}}_p(\widetilde{\chi}_1 \widetilde{\chi}_2^{-1})) \rightarrow H^1(G_K, \overline{\mathbb{F}}_p( \chi))$. We claim 
$$
\operatorname{dim}_{\overline{\mathbb{F}}_p} Q' = \nu' + \sum_\tau \begin{cases}
	x_{\operatorname{max},\tau} +1 & \text{if $\tau \in J_{\operatorname{max}}$} \\
	x_{\operatorname{max},\tau}  & \text{if $\tau \not\in J_{\operatorname{max}}$}
\end{cases} = \nu' + \operatorname{Card}(J_{\operatorname{max}}) +  \sum_\tau x_{\operatorname{max},\tau},
$$
where $\nu' =0$ unless $\chi =1$ in which case $\nu' =1$. To see this note this dimension is the sum of the dimension of the $p$-torsion in $H^1(G_K,\overline{\mathbb{Z}}_p(\widetilde{\chi}_1 \widetilde{\chi}_2^{-1}))$ (which is $\nu'$) and the $\overline{\mathbb{Q}}_p$-dimension of $H^1(G_K,\overline{\mathbb{Q}}_p(\widetilde{\chi}_1 \widetilde{\chi}_2^{-1}))_f$. It follows from \cite[1.24]{Nek93} that this latter $\overline{\mathbb{Q}}_p$-dimension is precisely the number $\kappa: K \rightarrow \overline{\mathbb{Q}}_p$ for which the $\kappa$-Hodge--Tate weight of $\widetilde{\chi}_1$ is greater than the $\kappa$-Hodge--Tate weight of $\widetilde{\chi}_2$. Examining the Hodge--Tate weights of $\widetilde{\chi}_1$ and $\widetilde{\chi}_2$, we see this number is precisely the sum in the second part of the claimed formula.

Write $Q$ for the image of $Q'$ under the injection $H^1(G_K,\overline{\mathbb{F}}_p(\chi)) \hookrightarrow H^1(G_L,\overline{\mathbb{F}}_p)$. It follows from Corollary~\ref{ASfinal} that any element of $Q$ is contained in $\Psi_{\sigma}(\chi_1,\chi_2)^{\operatorname{Gal}(L/K) = \chi^{-1}}$. Corollary~\ref{cor-maxpairs} implies that the dimension of $Q$ is at least the dimension of $\Psi_{\sigma}(\chi_1,\chi_2)^{\operatorname{Gal}(L/K) = \chi^{-1}}$ since $\nu = 1$ from Corollary~\ref{cor-maxpairs} implies the $\nu'$ defined above equals $1$. Therefore, $Q = \Psi_{\sigma}(\chi_1,\chi_2)^{\operatorname{Gal}(L/K) = \chi^{-1}}$ and we can choose $C$ so that $\overline{r} = r \otimes_{\overline{\mathbb{Z}}_p} \overline{\mathbb{F}}_p$ as desired. This finishes the proof of Theorem~\ref{main}.

It follows from these results that Corollary~\ref{cor-maxpairs} can be improved as follows.

\begin{cor}\label{dimPsiequals}
 	If $(J_{\operatorname{max}},x_{\operatorname{max}})$ is the maximal pair from part (2) of Proposition~\ref{proposition-maximal} and $\chi = \chi_1/\chi_2$, then
	$$
	\Psi_{\sigma}(\chi_1,\chi_2) = \Psi_{\sigma,J_{\operatorname{max}},x_{\operatorname{max}}}
	$$
	and
	$$
	\begin{aligned}
		\operatorname{dim}_{\overline{\mathbb{F}}_p} \Psi_{\sigma}(\chi_1,\chi_2)^{\operatorname{Gal}(L/K) = \chi^{-1}} = \nu' + \operatorname{Card}(J_{\operatorname{max}})  + \sum_{\tau}x_{\operatorname{max},\tau}, 
	\end{aligned}
	$$
	where $\nu' =0$ unless $\chi =1$ in which case $\nu'=1$.
\end{cor}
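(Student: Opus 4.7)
The first assertion $\Psi_\sigma(\chi_1,\chi_2) = \Psi_{\sigma,J_{\max},x_{\max}}$ is already the content of Corollary~\ref{cor-maxpairs}, so the only new content is sharpening the dimension inequality there to an equality with $\nu$ replaced by $\nu'$. The plan is to extract this from the proof of Theorem~\ref{main} just completed: that proof constructed, from crystalline characters $\widetilde{\chi}_1,\widetilde{\chi}_2$ lifting $\chi_1,\chi_2$ with Hodge--Tate weights prescribed by $(J_{\max},x_{\max})$, a subspace
$$
Q \subset \Psi_\sigma(\chi_1,\chi_2)^{\operatorname{Gal}(L/K)=\chi^{-1}}
$$
arising as the image of the Bloch--Kato group $H^1_f(G_K,\overline{\mathbb{Z}}_p(\widetilde{\chi}_1\widetilde{\chi}_2^{-1}))$ under reduction mod~$p$ and the inflation--restriction identification. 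The containment uses Corollary~\ref{ASfinal}, and a standard Bloch--Kato dimension count (the $\overline{\mathbb{Q}}_p$-dimension via \cite[1.24]{Nek93}, plus a $p$-torsion contribution of $\nu'$) gives
$$
\dim_{\overline{\mathbb{F}}_p} Q \;=\; \nu' + \operatorname{Card}(J_{\max}) + \sum_\tau x_{\max,\tau}.
$$

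Sandwiching between this lower bound and the upper bound from Corollary~\ref{cor-maxpairs} yields
$$
\nu' + \operatorname{Card}(J_{\max}) + \sum_\tau x_{\max,\tau} \;\leq\; \dim_{\overline{\mathbb{F}}_p} \Psi_\sigma(\chi_1,\chi_2)^{\operatorname{Gal}(L/K)=\chi^{-1}} \;\leq\; \nu + \operatorname{Card}(J_{\max}) + \sum_\tau x_{\max,\tau},
$$
so it suffices to verify $\nu \leq \nu'$. This was in fact already the key compatibility step in the proof of Theorem~\ref{main}: if $\nu = 1$ then the defining divisibility condition $-\Omega_{\tau,J_{\max},x_{\max}} \in (p^f-1)\mathbb{Z}_{\geq 0}$ combined with Remark~\ref{omegawhy} forces $\omega_{\sigma,J_{\max},x_{\max}}=1$, hence $\chi|_{I_K}=1$; but $\nu=1$ also requires $\chi=1$ outright, so $\nu'=1$ by definition.

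The only real obstacle is this last bookkeeping step linking the definitions of $\nu$ and $\nu'$; once $\nu=\nu'$ is in hand the three quantities in the sandwich must all agree and the corollary follows. Note that the degenerate case of Lemma~\ref{lem-degen} and Corollary~\ref{cor-degen} is automatically covered, because in that situation $\chi$ is an unramified twist of the cyclotomic character and the existence of crystalline lifts producing $Q$ of the correct dimension was handled separately in the proof of Theorem~\ref{main}.
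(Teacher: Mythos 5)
Your sandwich argument is precisely how the paper obtains this corollary: the inclusion $Q \subseteq \Psi_{\sigma}(\chi_1,\chi_2)^{\operatorname{Gal}(L/K)=\chi^{-1}}$ via Corollary~\ref{ASfinal}, the Bloch--Kato/Nekov\'a\v{r} count $\dim_{\overline{\mathbb{F}}_p} Q = \nu' + \operatorname{Card}(J_{\operatorname{max}}) + \sum_\tau x_{\operatorname{max},\tau}$, the upper bound of Corollary~\ref{cor-maxpairs}, and the observation that $\nu=1$ already forces $\chi=1$ and hence $\nu'=1$, so $\nu\leq\nu'$ and the three quantities coincide. In the non-degenerate situation this is correct and is exactly the paper's proof.

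The one step I would not accept is your closing paragraph. In the degenerate case ($a_\tau-b_\tau=p-1$ for all $\tau$, $J_{\operatorname{max}}=\operatorname{Hom}_{\mathbb{F}_p}(k,\overline{\mathbb{F}}_p)$, $x_{\operatorname{max},\tau}=e-1$) the separate argument in the proof of Theorem~\ref{main} does \emph{not} produce a single $Q$ of dimension $\nu'+\operatorname{Card}(J_{\operatorname{max}})+\sum_\tau x_{\operatorname{max},\tau}$: it varies the unramified twist $\widetilde{\psi}$ and only shows that the union of the resulting images exhausts $H^1(G_K,\overline{\mathbb{F}}_p(\chi))$. Moreover the dimension formula itself fails there: by Lemma~\ref{lem-degen} the left-hand side is all of $H^1(G_K,\overline{\mathbb{F}}_p(\chi))$, whose dimension by the local Euler characteristic formula and duality is $ef + h^0(\chi) + h^0(\chi^{-1}\chi_{\operatorname{cyc}})$, which exceeds $\nu'+ef$ by one whenever $\chi=\chi_{\operatorname{cyc}}$ (already for $K=\mathbb{Q}_p$, $\sigma=\sigma_{p-1,0}$, $\chi=\chi_{\operatorname{cyc}}$ one gets $2$ versus the predicted $1$ --- this is the classical fact that \emph{every} class, peu or tr\`es ramifi\'ee, admits weight $p+1$). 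So the degenerate case is not ``automatically covered''; the corollary must be read as excluding it, which is consistent with how the paper uses it in Section~\ref{sec:explicit-comp}, where that case is disposed of separately before the corollary is invoked.
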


\section{Explicit comparison with Demb\`{e}l\`{e}--Diamond--Roberts}\label{sec-DDR}

When $K/\mathbb{Q}_p$ is unramified \cite{ddr16} define an alternative explicit set of weights $W^{\operatorname{DDR}}(\overline{r})$ using local class field theory. The remainder of this paper will be devoted to proving the following theorem.

\begin{prop}\label{DDR=us} Suppose $p>2$ and $K/\QQp$ is unramified. Then $W^\mathrm{exp}(\ovr r) = W^\mathrm{DDR}(\ovr r)$.
\end{prop}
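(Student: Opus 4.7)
The plan is to give a direct comparison between the Kummer-theoretic subspace $\Psi_\sigma(\chi_1,\chi_2)$ developed in Sections~\ref{AH}--\ref{sec-refine} and the local class field theory subspace $L^{\operatorname{DDR}}_\sigma(\chi_1,\chi_2)$ from \cite{ddr16}, avoiding the crystalline lift interpretation on either side. Since $K/\QQp$ is unramified we have $e=1$, so the only admissible values are $x_\tau = 0$ and a pair $(J,x)$ reduces to a subset $J \subset \Hom_{\FFp}(k,\FFpb)$. By Corollary~\ref{dimPsiequals}, the subspace $\Psi_\sigma(\chi_1,\chi_2)^{\Gal(L/K) = \chi^{-1}}$ has dimension $\nu' + \operatorname{Card}(J_{\operatorname{max}})$, and via Proposition~\ref{maximalprop} an explicit basis can be read off from the generators $(v^{\Omega_{\tau,\sigma,J_{\operatorname{max}},0}})_\tau \lambda_\psi U_{J_{\operatorname{max}},0,\psi}$.

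First I would recall the DDR recipe in our notation: \cite{ddr16} produces a distinguished basis of $H^1(G_K,\FFpb(\chi))$ using local class field theory (via the reciprocity map $K^\times \otimes_{\ZZ} \FFpb \to H^1(G_K,\FFpb(\chi_{\operatorname{cyc}}))$ combined with multiplicative characters), and then defines $L^{\operatorname{DDR}}_\sigma(\chi_1,\chi_2)$ as the span of the basis elements selected by a combinatorial rule depending on $\sigma$ and $\ovr r^{\operatorname{ss}}|_{I_K}$. Reformulating, both sides are described by prescribing a set of generators inside $H^1(G_K,\FFpb(\chi))$, so Proposition~\ref{DDR=us} reduces to showing that the two sets span the same subspace.

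The key step is to translate between Kummer-theoretic generators and DDR class-field-theoretic generators using the Br\"uckner--Shafarevich--Vostokov explicit reciprocity law. The map $\Psi_0$ factors through the Kummer identification $L^\times \otimes \FFpb = H^1(G_L,\FFpb)$; applying BSV turns an Artin--Hasse exponential expression $\ovr{E}^{\operatorname{AH}}(f)$ for $f \in l[[v]]$ into an element of $H^1(G_K,\FFpb(\chi))$ whose image under the local Artin map is computable directly from the coefficients of $f$. Carrying out this computation on the explicit basis from Proposition~\ref{maximalprop} produces a collection of elements in $H^1(G_K,\FFpb(\chi))$ whose span must be shown to equal $L^{\operatorname{DDR}}_\sigma(\chi_1,\chi_2)$.

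The main obstacle will be the boundary combinatorial comparison: the DDR recipe involves a somewhat intricate recursive rule for deciding basis inclusion, while our description naturally produces generators indexed by the unique maximal pair from Proposition~\ref{proposition-maximal}. Since both subspaces have the same dimension by Corollary~\ref{dimPsiequals} and the corresponding dimension count in \cite{ddr16}, it will suffice to show one inclusion, which should reduce to a term-by-term analysis of the BSV formula on the generators from Proposition~\ref{maximalprop}; the $\varphi$-shifts encoded in $\Omega_{\tau,\sigma,J_{\operatorname{max}},0}$ should match precisely the indexing shifts used in \cite{ddr16}, closing the loop and completing the proof.
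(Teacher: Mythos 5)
Your plan matches the paper's proof essentially step for step: reduce to comparing $\Psi_{\sigma}(\chi_1,\chi_2)^{\operatorname{Gal}(L/K)=\chi^{-1}}$ with $L^{\operatorname{DDR}}_{\sigma}(\chi_1,\chi_2)$, use Corollary~\ref{dimPsiequals} and the DDR dimension count so that only one inclusion is needed, and verify that inclusion by a term-by-term analysis of the Br\"uckner--Shafarevich--Vostokov formula applied to the explicit generators from Proposition~\ref{maximalprop}, with the matching of $\Omega_{\tau,\sigma,J_{\operatorname{max}},0}$ against the DDR shift function $\mu$ supplied by the combinatorics of \cite{cegm17}. The only points you leave implicit are the separate treatment of the degenerate case ($J_{\operatorname{max}}$ everything and $a_\tau=p-1$ for all $\tau$, where both sides are all of $H^1(G_K,\overline{\mathbb{F}}_p(\chi))$) and the actual vanishing computations for the constant terms, but the approach is the correct one.
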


As mentioned in the introduction, this follows from the results above and those of \cite{cegm17}, since both sets have the same description in terms of crystalline lifts. In the spirit of this paper, we will instead give a direct proof of the equality using a reciprocity law of Br\"uckner--Shaferevich--Vostokov (see \cite[Thm.~4]{vos79}) without reference to any $p$-adic Hodge theory. As a consequence we get an alternative proof of the conjecture of \cite{ddr16} when $p>2$.

We begin by recalling the description of $W^{\operatorname{DDR}}(\overline{r})$. For this we can suppose $\ovr r\colon G_K \to \GL_2(\FFpb)$ is reducible (when $\ovr r$ is irreducible $W^\mathrm{exp}(\ovr r) = W^\mathrm{DDR}(\ovr r)$ is essentially true by definition). As before, we write
\[
\ovr r \sim \begin{pmatrix} \chi_1 & c \\ 0 & \chi_2 \end{pmatrix}
\]
for characters $\chi_1,\chi_2\colon G_K\to \GL_2(\FFpb)$. Set $\chi:=\chi_1\chi_2^{-1}$ and write
\[
\chi = \psi\prod_{\tau \in \Hom_{\FFp}(k, \FFpb)} \omega_\tau^{a_\tau},
\]
where $\psi$ is an unramified character and $a_\tau \in [1,p]$ with $a_\tau<p$ for at least one $\tau$. Recall that this uniquely determines the $a_\tau$. For a fixed $\tau\colon k \hookrightarrow \FFpb,$ we also let $\lambda_{\tau, \psi}$ denote a basis of the one-dimensional $\FFpb$-vector space $(l\otimes_{k,\tau} \FFpb)^{\Gal(L/K) = \psi}.$ As before, let $\pi$ denote a uniformiser of $K$ and let $\pi^{1/(p^f-1)}$ denote a $(p^f-1)$-th root of $\pi$ in a fixed algebraic closure.

\begin{con}
	Write $\varpi:=\pi^{1/(p^f-1)}$ and consider the homomorphism
	\begin{align*}
		\varepsilon_{\varpi^r}\colon l \otimes_{\FFp} \FFpb &\to \O_L^\times \otimes_{\ZZp} \FFpb; \\
		a \otimes b &\mapsto E^\mathrm{AH}([a]\varpi^r) \otimes b.
	\end{align*}
	with $E^{\operatorname{AH}}$ as defined in Section~\ref{AH}. In \cite{ddr16} an explicit basis of $H^1(G_K, \FFpb(\chi))$ is defined as follows. For each $\tau\colon k \hookrightarrow \FFpb$, we will define an embedding $\tau'$ and an integer $n_\tau'.$ Recall the definition of $\Omega_{\tau,a}$ from (\ref{omegadef}). If $a_{\tau \circ \varphi} \neq p$, then 
	\begin{itemize}
		\item $\tau' := \tau \circ \varphi$ and $n_\tau ' = \Omega_{\tau \circ \varphi, a}.$
	\end{itemize} 
	However, if $a_{\tau \circ \varphi} = p$, then let $j$ equal the smallest integer $>1$ with $a_{\tau \circ \varphi^j} \neq p-1$ and set
	\begin{itemize}
		\item $\tau':= \tau \circ \varphi^j$ and $n_\tau ' = \Omega_{\tau \circ \varphi^j, a} - (p^f - 1).$
	\end{itemize}
	Then we define
	\[
	u_\tau:=\varepsilon_{\varpi^{n_\tau '}}(\lambda_{\tau', \psi}) \in \O_L^\times \otimes_{\ZZ} \FFpb,
	\]
	for all $\tau \in \Hom_{\FFp}(k, \FFpb).$ If $\chi = 1$, we additionally define $u_\mathrm{triv} := \varpi \otimes 1 \in \O_L^\times \otimes_{\ZZ} \FFpb$. If $\chi$ is cyclotomic, we additionally define $u_\mathrm{cyc}:= \varepsilon_{\varpi^{p(p^f-1)/(p-1)}}(b \otimes 1),$ where $b\in l$ is any element with $\mathrm{Tr}_{l/\FFp}(b) \neq 0.$
\end{con}

\begin{lem}\label{basis}
	The elements $\{u_\tau \mid \tau \in \Hom_{\FFp}(k, \FFpb)\}$, together with $u_\mathrm{triv}$ if $\chi$ is trivial and $u_\mathrm{cyc}$ if $\chi$ is cyclotomic, forms a basis of the $\FFpb$-vector space
	\[
	U_\chi:=\left(L^\times \otimes \FFpb\right)^{\Gal(L/K) = \chi}.
	\]
\end{lem}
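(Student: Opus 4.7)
The plan is to establish the lemma in three stages: count $\dim_{\FFpb} U_\chi$ by a cohomological argument, verify that each proposed element lies in $U_\chi$, and then argue linear independence.

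For the dimension, Kummer theory identifies $L^\times \otimes_\ZZ \FFpb$ with $H^1(G_L, \FFpb(\chi_{\operatorname{cyc}}))$ as a $\Gal(L/K)$-module (the natural action on $L^\times$ corresponding to the usual $G_K$-action on cohomology). Since $|\Gal(L/K)| = (p^f-1)^2$ is coprime to $p$, inflation--restriction identifies the $\chi$-isotypic component with $H^1(G_K, \FFpb(\chi_{\operatorname{cyc}}\chi^{-1}))$. Combining the local Euler characteristic formula with Tate local duality then yields
\[
\dim_{\FFpb} U_\chi = [K:\QQp] + \delta_{\chi, 1} + \delta_{\chi, \chi_{\operatorname{cyc}}} = f + \delta_{\chi, 1} + \delta_{\chi, \chi_{\operatorname{cyc}}},
\]
which matches the number of proposed basis elements.

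For equivariance, the identity $g(\varpi) = \omega(g)\varpi$ together with multiplicativity of the Teichm\"uller lift gives
\[
g\bigl(\varepsilon_{\varpi^r}(x \otimes 1)\bigr) = \varepsilon_{\varpi^r}\bigl(g(x)\omega(g)^r \otimes 1\bigr).
\]
Using the decomposition $l \otimes_{\FFp} \FFpb = \prod_{\sigma \colon k \to \FFpb}(l \otimes_{k,\sigma} \FFpb)$ as a $\Gal(L/K)$-module, one sees that $\varepsilon_{\varpi^r}(\lambda)$ lies in $U_\chi$ precisely when (i) $r \equiv \Omega_{\sigma, a} \pmod{p^f - 1}$ for the embedding $\sigma$ supporting $\lambda$, matching the inertia character, and (ii) $\lambda$ lies in the $\psi$-eigenspace of $\Gal(l/k)$ acting on $l \otimes_{k,\sigma} \FFpb$. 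The definitions of $n_\tau'$ and $\tau'$---including the shift $\tau' = \tau \circ \varphi^j$ and the subtraction of $p^f-1$ in the degenerate case $a_{\tau \circ \varphi} = p$, which compensates for the base-$p$ carry---are engineered to satisfy both conditions. The auxiliary elements $u_{\operatorname{triv}}$ (when $\chi = 1$, using that $\omega(g) \in \mu_{p^f-1}$ is killed after $\otimes \FFpb$) and $u_{\operatorname{cyc}}$ (when $\chi = \chi_{\operatorname{cyc}}$, using the trace condition on $b$) are handled by separate verifications.

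Since the proposed elements then number exactly $\dim_{\FFpb} U_\chi$, it suffices to show linear independence. The plan is to analyze the decreasing filtration $U^{(n)} = 1 + \mathfrak{m}_L^n$ on $(1 + \mathfrak{m}_L) \otimes \FFpb$: the element $u_{\operatorname{triv}}$ has nonzero $v_L$-valuation and is thereby separated from the rest, and each $u_\tau = \varepsilon_{\varpi^{n_\tau'}}(\lambda_{\tau',\psi})$ has leading term $\lambda_{\tau',\psi}$ in $U^{(n_\tau')}/U^{(n_\tau'+1)} \cong l$. When the $n_\tau'$ are pairwise distinct, these leading terms sit at different filtration levels; when several $n_\tau'$ coincide (for instance when the sequence $(a_\tau)$ has internal symmetries), the corresponding $\lambda_{\tau',\psi}$ lie in distinct $\sigma$-summands of $l \otimes \FFpb$ and are $\FFpb$-linearly independent. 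The main obstacle will be the cyclotomic case: $u_{\operatorname{cyc}}$ has leading exponent $p(p^f-1)/(p-1) = v_L(\zeta_p - 1)$, precisely the boundary at which the $p$-th power map and the Artin--Hasse exponential begin to produce non-trivial relations in $(1 + \mathfrak{m}_L) \otimes \FFpb$. Showing that $u_{\operatorname{cyc}}$ is not in the span of the other basis elements requires a careful analysis of the filtration near this boundary, where the trace hypothesis on $b$ is essential; alternatively, one could invoke Vostokov's explicit reciprocity law (available since $p > 2$ and $K/\QQp$ is unramified) to bypass this computation.
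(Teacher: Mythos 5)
Your outline is essentially correct, but note that the paper does not prove this lemma at all: it simply cites \cite[Theorem 5.1]{ddr16}, so what you have reconstructed is (a sketch of) the argument of Demb\'el\'e--Diamond--Roberts rather than anything in this paper. Your three-step structure is the right one and matches theirs: the dimension count $\dim U_\chi = f + \delta_{\chi,1} + \delta_{\chi,\chi_{\operatorname{cyc}}}$ via Kummer theory, inflation--restriction (using $p \nmid \lvert\Gal(L/K)\rvert$) and the local Euler characteristic plus duality is correct, as is the equivariance computation $g(\varepsilon_{\varpi^r}(x)) = \varepsilon_{\varpi^r}(g(x)\omega(g)^r)$ showing $u_\tau \in U_\chi$ precisely because $n_\tau' \equiv \Omega_{\tau',a} \bmod (p^f-1)$ in both the generic and the carried cases. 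Two points in your independence step need more than you give them. First, a graded piece $U^{(n)}/U^{(n+1)}$ contributes to $L^\times \otimes \FFpb = (L^\times/(L^\times)^p)\otimes\FFpb$ only when $p \nmid n$ and $n < p(p^f-1)/(p-1)$ (otherwise the whole level is absorbed by $p$-th powers), so you must check $p \nmid n_\tau'$ and the strict upper bound on $n_\tau'$; these facts are exactly what the present paper later quotes from \cite[3.6.1, 3.6.4]{cegm17} in Step~4 of Section~12, so they are available but not free. Second, when several $n_\tau'$ coincide you assert the corresponding $\tau'$ are distinct; this is true but requires the small combinatorial check that $\tau \mapsto (\tau', n_\tau')$ is injective (if two degenerate indices shared the same $\tau'$, minimality of $j$ would force them equal, and a generic and a degenerate index sharing $\tau'$ have $n'$ differing by $p^f-1$). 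Your treatment of $u_{\operatorname{cyc}}$ is honestly flagged as deferred; there the quotient at the boundary level $p(p^f-1)/(p-1)$ is the one-dimensional Artin--Schreier quotient $l/\wp(l)$, which is where the hypothesis $\operatorname{Tr}_{l/\FFp}(b)\neq 0$ enters. With those gaps filled your argument is a complete and self-contained proof, which is more than the paper offers.
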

\begin{proof}
See \cite[Theorem 5.1]{ddr16}.
\end{proof}

The isomorphism $G_L^{\operatorname{ab}} \cong \widehat{L}^\times$ of local class field theory induces an identification
$$
H^1(G_L,\overline{\mathbb{F}}_p) = \operatorname{Hom}_{\overline{\mathbb{F}}_p}(L^\times \otimes_{\mathbb{Z}} \overline{\mathbb{F}}_p,\overline{\mathbb{F}}_p)
$$
under which $H^1(G_K,\overline{\mathbb{F}}_p(\chi)) = H^1(G_L,\overline{\mathbb{F}}_p)^{\operatorname{Gal}(L/K) = \chi^{-1}}$ identifies with the $\overline{\mathbb{F}}_p$-linear dual of $U_\chi$. Thus, we can define a subspace of $H^1(G_K,\overline{\mathbb{F}}_p(\chi))$ in terms of the vanishing of cocycles on certain elements of the basis above as follows.

\begin{defn}
	Fix a Serre weight $\sigma = \sigma_{a,b}$ and write $r_\tau := a_\tau-b_\tau+1$. Assume $\sigma  \in W^{\operatorname{exp}}(\overline{r}^{\operatorname{ss}})$. It follows that there exists a $J \subset \operatorname{Hom}_{\mathbb{F}_p}(k,\overline{\mathbb{F}}_p)$ with
	\begin{equation}\label{unramandsemisimple}
	\chi_1|_{I_K} = \prod_{\tau \in J} \omega_\tau^{a_\tau + 1} \prod_{\tau \not\in J} \omega_\tau ^{b_\tau}, \qquad \chi_2|_{I_K} = \prod_{\tau \not\in J} \omega_{\tau}^{a_\tau + 1} \prod_{\tau \in J} \omega_{\tau}^{b_\tau}
\end{equation} 
	and let $(J_{\operatorname{max}},x_{\operatorname{max}})$ be the maximal subset in the sense of Proposition~\ref{proposition-maximal}. (Recall we are assuming $e=1$ and we must have $x_\tau =0$ for all $\tau$ and similarly for $x_{\operatorname{max}}$.) In \cite[\S7.1]{ddr16} a cardinality-preserving shift function $\mu\colon \wp\left(\Hom_{\FFp}(k, \FFpb)\right) \to \wp\left(\Hom_{\FFp}(k, \FFpb)\right)$ is defined on subsets of $\operatorname{Hom}_{\mathbb{F}_p}(k,\overline{\mathbb{F}}_p)$. Then we define
	$$
	L_\sigma^\mathrm{DDR}(\chi_1,\chi_2)\subseteq H^1(G_K, \FFpb(\chi))
	$$
	to be the subspace consisting of those cocycles $f \in H^1(G_K, \FFpb(\chi))$ with
	\begin{itemize}
		\item $f(u_\tau) =0$ for all $\tau \not\in \mu(J_{\operatorname{max}})$.
		\item $f(u_{\operatorname{cyc}})= 0$ if $\chi = \chi_{\operatorname{cyc}}$ except when, additionally, $J_{\operatorname{max}} = \operatorname{Hom}_{\mathbb{F}_p}(k,\overline{\mathbb{F}}_p)$ and $r_\tau = p$ for all $\tau$, in which case we have no requirement at $u_{\operatorname{cyc}}$.
	\end{itemize}
	\begin{rem}
	    The shift function $\mu$ has a rather involved construction which, for the time being, we will not need. The only point where the actual definition of $\mu$ is used in the assertions preceding Remark~\ref{rem-wheremuisused}.
	\end{rem}
	In other words, if $c_\tau \in H^1(G_L,\overline{\mathbb{F}}_p)$ denotes the $\FFpb$-dual of $u_\tau$, and similarly for $c_{\operatorname{triv}}$ and $c_{\operatorname{cyc}}$, then $L_\sigma^{\operatorname{DDR}}(\chi_1,\chi_2)$ is the span of the $c_\tau$ for $\tau \in \mu(J_{\operatorname{max}})$, together with $c_{\operatorname{triv}}$ if $\chi =1$ and $c_{\operatorname{cyc}}$ if $\chi = \chi_{\operatorname{cyc}}$, $J_{\operatorname{max}} = \operatorname{Hom}_{\mathbb{F}_p}(k,\overline{\mathbb{F}}_p)$ and $r_\tau = p$ for all $\tau$.
\end{defn}

\begin{defn}\label{defn:W-DDR}
	For $\overline{r} \sim \left( \begin{smallmatrix} \chi_1 & c \\ 0 & \chi_2\end{smallmatrix}\right)$ as above, the set $W^{\operatorname{DDR}}(\overline{r})$ is defined as follows. We have $\sigma = \sigma_{a,b} \in W^{\operatorname{DDR}}(\overline{r})$ if and only if
	\begin{enumerate}
		\item $\sigma \in W^{\operatorname{exp}}(\overline{r}^{\operatorname{ss}})$ and
		\item $[c] \in L_\sigma^\mathrm{DDR}(\chi_1,\chi_2)$.
	\end{enumerate}
\end{defn}

\section{Vostokov's formula}\label{subsec:vost-formula}

In this section we recall the explicit reciprocity laws described in \cite{vos79} which requires $p> 2$. As in the previous section, local class field theory allows us to identify 
$$
H^1(G_L,\overline{\mathbb{F}}_p) = \operatorname{Hom}_{\overline{\mathbb{F}}_p}(L^\times\otimes_{\mathbb{Z}} \overline{\mathbb{F}}_p,\overline{\mathbb{F}}_p).
$$
Fix a primitive $p$-th root of unity $\epsilon_1 \in L$. Then, for $\alpha,\beta \in L^\times \otimes_{\mathbb{Z}} \overline{\mathbb{F}}_p^\times$, we write
$$
c(\alpha,\beta) := f_\beta(\alpha),
$$
where $f_\beta: G_L \rightarrow \overline{\mathbb{F}}_p$ denotes the image of $\beta$ under the Kummer map $L^\times \otimes_{\mathbb{Z}} \overline{\mathbb{F}}_p = H^1(G_L,\mu_p(L) \otimes_{\mathbb{F}_p} \overline{\mathbb{F}}_p) = H^1(G_L,\overline{\mathbb{F}}_p)$. Here the second equality comes from the identification $\mu_p(L) = \mathbb{F}_p$ induced by $\epsilon_1$, so that concretely $c(\alpha,\beta)$ is defined so that
$$
\sigma_\alpha(\beta^{1/p}) \beta^{-1/p} = \epsilon_1^{c(\alpha,\beta)}
$$
for any $p$-th root $\beta^{1/p}$ of $\beta$ and $\sigma_\alpha \in G_L$ any element mapped onto $\alpha$ by $G_L \rightarrow G_L^{\operatorname{ab}} \rightarrow \widehat{L}^\times$.
\begin{thm}\label{thm-vos}
	Let $L^{\operatorname{AH}}$ denote the inverse of $E^{\operatorname{AH}}: vW(l)[[v]] \xrightarrow{\sim} 1 + vW(l)[[v]]$ and let $z(v) \in W(l)[[v]]$ be such that $z(\varpi) = \epsilon_1$. For $A,B \in W(l)((v))^\times$, write
	$$
	A = v^a [\theta] \epsilon , \qquad B = v^b [\theta'] \eta
	$$
	with $\epsilon,\eta \in 1+vW(l)[[v]]$ and $\theta,\theta' \in l^\times$. Set
	\begin{equation}\label{eqn:vost-formula} 
		\gamma = \mathrm{res}_v\left(\left(L^\mathrm{AH}(\varepsilon(v))\frac{d L^{\mathrm{AH}}(\eta(v))}{dv} - L^{\mathrm{AH}}(\varepsilon(v))d_{\mathrm{log}}(B(v)) + L^{\mathrm{AH}}(\eta(v)) d_{\mathrm{log}}(A(v))\right)\left(\frac{1}{z(v)^p-1}\right)\right),
	\end{equation}
	where $\operatorname{res}_v(x)$ denotes the coefficient of $v^{-1}$ in $x \in W(l)((v))$ and $d_{\operatorname{log}}$ denotes the logarithmic derivative $\frac{1}{x(v)}\frac{d}{dv}x(v)$. Then
	$$
	c(A(\varpi),B(\varpi)) = \operatorname{Tr}_{W(l)/\mathbb{Z}_p}(\gamma) \text{ modulo }p.
	$$
\end{thm}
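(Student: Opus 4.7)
The strategy I would pursue is to show that the right-hand side
\[
\langle A, B\rangle_V := \operatorname{Tr}_{W(l)/\ZZp}(\gamma) \bmod p
\]
defines a continuous bilinear Steinberg pairing on $W(l)((v))^\times$, and then to invoke the uniqueness (up to scalar) of such pairings, fixing the scalar by one explicit calculation.

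First I would verify well-definedness of $\gamma \bmod p$: that it is independent of the auxiliary lift $z(v)$ of $\epsilon_1$ and of the chosen factorisations $A = v^a[\theta]\varepsilon$, $B = v^b[\theta']\eta$, and that replacing $A$ by $A \cdot C^p$ (for $C \in W(l)((v))^\times$) changes $\gamma$ by an element of $pW(l)$. These reduce to the facts that $L^{\mathrm{AH}}$ is a $\ZZp$-linear isomorphism $vW(l)[[v]] \xrightarrow{\sim} 1 + vW(l)[[v]]$, that $L^{\mathrm{AH}}(C^p) \in pW(l)[[v]]$, that $d_{\log}$ turns multiplication into addition, and that $\mathrm{res}_v$ kills exact differentials. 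Combined with bilinearity (immediate from additivity of each ingredient) and continuity in the $v$-adic topology (clear from the $v$-adic filtration), this gives a well-defined bilinear pairing
\[
\langle -,- \rangle_V \colon (L^\times \otimes_{\ZZ} \FFp) \times (L^\times \otimes_{\ZZ} \FFp) \longrightarrow \FFp.
\]

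The technical heart of the argument is the Steinberg relation $\langle A, 1-A\rangle_V = 0$ whenever both $A$ and $1-A$ lie in $W(l)((v))^\times$. Once this is established, uniqueness of continuous bilinear Steinberg pairings on a local field (Moore/Matsumoto) forces $\langle -,-\rangle_V = \lambda \cdot c(-,-)$ for some $\lambda \in \FFp$, and I would pin down $\lambda = 1$ by testing on the pair $(\varpi, \epsilon_1)$: on the Kummer side this is computed directly from $\sigma(\varpi^{1/p})\varpi^{-1/p} = \epsilon_1^{c(\varpi, \epsilon_1)}$, while on the Vostokov side the formula collapses because $A = v$ forces $L^{\mathrm{AH}}(\varepsilon) = 0$ and $d_{\log}(A) = dv/v$, leaving a single residue that can be evaluated explicitly using $z(\varpi) = \epsilon_1$.

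The main obstacle is plainly the Steinberg relation. Writing $B = 1 - A$, one needs a delicate cancellation between the three summands in \eqref{eqn:vost-formula}, and the subtlety is that $L^{\mathrm{AH}}$ is \emph{not} the $p$-adic logarithm: it differs from $\log$ by the Frobenius-iterate correction $\sum_{n \geq 0}(\varphi/p)^n$, so the naive Steinberg identity for $\log$ and $d_{\log}$ does not transport directly. Vostokov's approach, which I would follow, is to write out a universal Steinberg expression in formal differentials, show that it equals an exact form modulo $p$ together with correction terms that are either exact or annihilated by $\mathrm{res}_v \circ \operatorname{Tr}_{W(l)/\ZZp}$, and exploit that the Frobenius shifts occurring in $L^{\mathrm{AH}}$ produce contributions that telescope after taking the trace. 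This is the core of the Brückner--Vostokov calculation, and all the real work of the theorem lies there.
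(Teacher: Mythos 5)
The paper does not actually prove this statement: its entire proof is the citation ``This is \cite[Thm.~4]{vos79}'', i.e.\ the theorem is imported wholesale as the Br\"uckner--Shafarevich--Vostokov explicit reciprocity law. Your proposal instead sketches a proof of Vostokov's theorem itself. The skeleton you describe --- well-definedness of $\gamma$ modulo $p$ and modulo $p$-th powers, bilinearity and continuity, the Steinberg relation, Moore--Matsumoto uniqueness of continuous Steinberg symbols on a local field, and a normalisation check --- is indeed the shape of the classical argument, so the strategy is not wrong.

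As a proof, however, it has a genuine gap, and you name it yourself: the Steinberg relation $\langle A,1-A\rangle_V=0$ is asserted but not established, and you concede that ``all the real work of the theorem lies there.'' This is not a routine verification. Because $L^{\mathrm{AH}}=(1-\varphi/p)\circ\log$ rather than $\log$, the three terms of \eqref{eqn:vost-formula} do not cancel formally; one must show that the discrepancy is a sum of exact differentials together with terms annihilated by $\mathrm{res}_v\circ\operatorname{Tr}_{W(l)/\ZZp}$ via a Frobenius-telescoping argument, and this computation occupies the bulk of \cite{vos79}. Two further points need care. First, independence of $\gamma$ from the choice of the series $z(v)$ lifting $\epsilon_1$ and from the expansions $\varepsilon(v),\eta(v)$ is itself a nontrivial lemma of Vostokov, not a formal consequence of the $\ZZp$-linearity of $L^{\mathrm{AH}}$ and the vanishing of residues of exact forms. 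Second, the normalisation step requires a test pair on which the Hilbert symbol is provably nontrivial; $(\varpi,\epsilon_1)$ need not be such a pair for an arbitrary uniformiser (it is trivial exactly when $\varpi$ is a norm from $L(\epsilon_1^{1/p})$), so you must either justify nontriviality for your $\varpi$ or normalise against a primary element as Vostokov does. In short: your proposal is a correct roadmap to \cite{vos79}, but not a self-contained proof, whereas the paper deliberately treats this result as a black box.
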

\begin{proof}
	This is \cite[Theorem 4.]{vos79}.
\end{proof}

The following form of this result is what we will use in our proofs.

\begin{cor}\label{cor-vos}
Also write $E^{\operatorname{AH}}$ for the base-change of the isomorphism $vW(l)[[v]] \xrightarrow{\sim} 1 + vW(l)[[v]]$ along $\otimes_{\mathbb{Z}_p} \overline{\mathbb{F}}_p$. Then, for $x,y(z(v)^p-1) \in vl[[v]] \otimes_{\mathbb{F}_p} \overline{\mathbb{F}}_p$, we have
$$
c(E^{\operatorname{AH}}(x)|_{v = \varpi}, E^{\operatorname{AH}}(y(z(v)^p-1))|_{v=\varpi}) = \operatorname{Tr}_{l \otimes_{\FFp} \FFpb/ \FFpb}( \overline{\gamma}_0), 
$$
where $\overline{\gamma}_0$ denotes the constant term\footnote{By constant term we mean the image of the Laurent series under the $\FFpb$-linear extension of the $\FFp$-linear map $l((v))\to l$ sending the series onto its constant term. After identifying $l((v))\otimes_{\FFp}\FFpb = \prod_\tau \FFpb((v))$, this becomes the constant term in each coordinate.} of
$$
\sum_{m \geq 0} \left( y\varphi^m \left( v\frac{d}{dv}(x) \right)\right) - \sum_{m \geq 1}  \left(\frac{1}{z(v)^p-1}\right)\left(  x \varphi^m \left( v\frac{d}{dv}(y(z(v)^p-1)) \right)\right). 
$$
\end{cor}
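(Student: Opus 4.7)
The strategy is to apply Vostokov's formula (Theorem~\ref{thm-vos}) to $A = E^{\operatorname{AH}}(\tilde x)$ and $B = E^{\operatorname{AH}}(\tilde w)$, where $\tilde x, \tilde w \in vW(l)[[v]] \otimes_{\mathbb{Z}_p} \overline{\mathbb{Z}}_p$ are any integral lifts of $x$ and $y(z(v)^p-1)$. Both $A$ and $B$ then lie in $(1+vW(l)[[v]]) \otimes \overline{\mathbb{Z}}_p$, so in the decomposition required by the theorem one has $a=b=0$, $\theta = \theta' = 1$, $\varepsilon = A$, $\eta = B$, and hence $L^{\operatorname{AH}}(\varepsilon) = \tilde x$ and $L^{\operatorname{AH}}(\eta) = \tilde w$. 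By $\overline{\mathbb{Z}}_p$-bilinearity of the Hilbert symbol we are free to work with such lifts and recover the $\otimes \overline{\mathbb{F}}_p$-extended statement at the end by reduction.

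The heart of the computation is to rewrite the logarithmic derivatives entering Vostokov's formula. Setting $F := \sum_{n \geq 0}(\varphi/p)^n$ and $D := v\,\tfrac{d}{dv}$, the identity $E^{\operatorname{AH}} = \exp \circ F$ gives $d_{\operatorname{log}}(E^{\operatorname{AH}}(g)) = \tfrac{d}{dv}F(g)$. A direct check yields the commutation $D\varphi = p\varphi D$, so $D(\varphi/p) = \varphi D$, and iterating, $D(\varphi/p)^n = \varphi^n D$. This collapses $F$ to an integral expression after differentiation:
$$
v\,\tfrac{d}{dv}F(g) \;=\; DF(g) \;=\; \sum_{n \geq 0}\varphi^n(Dg),
$$
the sum converging $v$-adically for $g \in vW(l)[[v]] \otimes \overline{\mathbb{Z}}_p$.

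Substituting into Vostokov's formula, the first two terms combine as
$$
\tilde x\,\tfrac{d\tilde w}{dv} - \tilde x\,d_{\operatorname{log}}(B) = -\tilde x\,\tfrac{d}{dv}\!\Bigl(\textstyle\sum_{n \geq 1}(\varphi/p)^n(\tilde w)\Bigr) = -\tfrac{\tilde x}{v}\sum_{n \geq 1}\varphi^n(D\tilde w),
$$
while the third is $\tilde w\,d_{\operatorname{log}}(A) = \tfrac{\tilde w}{v}\sum_{n \geq 0}\varphi^n(D\tilde x)$. Using $\operatorname{res}_v(g/v) = $ the coefficient of $v^0$ in $g$, one obtains that $\gamma$ equals the constant term of
$$
\frac{\tilde w}{z(v)^p-1}\sum_{n\geq 0}\varphi^n(D\tilde x) \;-\; \frac{\tilde x}{z(v)^p-1}\sum_{n\geq 1}\varphi^n(D\tilde w).
$$
Reducing modulo $p$, the element $\tilde w$ reduces to $y(z^p-1)$, so $\tilde w/(z^p-1)$ reduces to $y$; the map $\operatorname{Tr}_{W(l)/\mathbb{Z}_p}$ reduces mod $p$ to $\operatorname{Tr}_{l/\mathbb{F}_p}$, and extending scalars along $\otimes \overline{\mathbb{F}}_p$ produces the displayed formula.

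The main obstacle—or rather, the pivotal observation—is the commutation $D\circ(\varphi/p)^n = \varphi^n \circ D$: it is simultaneously what renders $dF(g)/dv$ integral (despite the ``$1/p$''s in $F$) and what produces precisely the factors $\varphi^n(Dg)$ appearing in the target expression. A secondary technical point is ensuring that the constant-term operation commutes with reduction mod $p$, but this is automatic: all series involved are Laurent series in $v$ over $W(l) \otimes \overline{\mathbb{Z}}_p$ (with only finitely many negative-power terms, since $1/(z^p - 1)$ has finite pole order and $\varphi^n(Dg)$ has $v$-valuation $\geq p^n$), so reduction acts coefficient-wise.
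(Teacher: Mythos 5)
Your proposal is correct and follows essentially the same route as the paper: the pivotal identity $v\frac{d}{dv}\circ(\varphi/p)^n=\varphi^n\circ v\frac{d}{dv}$ collapsing $d_{\operatorname{log}}E^{\operatorname{AH}}(g)$ to $v^{-1}\sum_m\varphi^m(v\frac{d}{dv}g)$, the conversion of $\operatorname{res}_v(g/v)$ into a constant term, and the final passage to $\otimes_{\mathbb{F}_p}\overline{\mathbb{F}}_p$ by bilinearity are all exactly the paper's argument. You are merely more explicit about lifting to $W(l)[[v]]$ before invoking Theorem~\ref{thm-vos} and reducing modulo $p$ afterwards, which the paper leaves implicit.
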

\begin{proof}
First suppose $x,y \in vl[[v]]$. Then Theorem~\ref{thm-vos} implies the assertion with $\overline{\gamma}_0$ replaced by
$$
\operatorname{res}_v\left( \left( \frac{1}{z(v)^p-1}\right)\left(x \frac{d}{dv}((z(v)^p-1)y) - x d_{\operatorname{log}}\left(E^{\operatorname{AH}}(y(z(v)^p-1))\right) + (z(v)^p-1)y d_{\operatorname{log}}\left(E^{\operatorname{AH}}(x)\right) \right) \right).
$$ 
We compute
$$
\begin{aligned}
\operatorname{d}_{\operatorname{log}}\left(E^{\operatorname{AH}}(x)\right) &= d_{\operatorname{log}} \operatorname{exp}\left( \sum_{m\geq 0} \frac{\varphi^m}{p^m}(x)\right) = \frac{d}{dv}\left( \sum_{m\geq 0} \frac{\varphi^m}{p^m}(x)\right) =  v^{-1}\sum_{m\geq 0} \varphi^m \left( v\frac{d}{dv}(x)\right),
\end{aligned}
$$
where the last equality follows from the identity $v\frac{d}{dv} \circ \varphi^m = p^m \varphi^m \circ v\frac{d}{dv}$. Using this we can rewrite the above residue as the residue of 
$$
\begin{aligned}
\left(\frac{1}{z(v)^p-1}\right) \left( x \frac{d}{dv}(y(z(v)^p-1)) -x \left( v^{-1}\sum_{m \geq 0} \varphi^m \left( v\frac{d}{dv}(y(z(v)^p-1))\right)\right) \right) + y \left( v^{-1}\sum_{m \geq 0} \varphi^m \left( v\frac{d}{dv}(x) \right)  \right) \\
 = \left(\frac{-x}{z(v)^p-1}\right) \left( v^{-1} \sum_{m\geq 1}  \varphi^m \left( v \frac{d}{dv}(y(z(v)^p-1))\right) \right) + v^{-1}\sum_{m \geq 0} \left( y \varphi^m \left( v \frac{d}{dv}(x)\right) \right).
\end{aligned}
$$
This is precisely the constant term of $\overline{\gamma}_0$. It follows that the identity holds for general $x,y \in vl[[v]] \otimes_{\FFp} \overline{\mathbb{F}}_p$ by linearity.
\end{proof}

\section{Proof of Proposition~\ref{DDR=us}}\label{sec:explicit-comp}

Suppose $\overline{r} = \left( \begin{smallmatrix} \chi_1 & c \\ 0 & \chi_2 \end{smallmatrix} \right)$ and write $\chi = \chi_1/\chi_2$. To prove $W^{\operatorname{exp}}(\overline{r})= W^{\operatorname{DDR}}(\overline{r})$, it suffices to show that
$$
L^{\operatorname{DDR}}_\sigma(\chi_1,\chi_2) = \Psi_{\sigma}(\chi_1,\chi_2)^{\operatorname{Gal}(L/K) = \chi^{-1}}
$$
for each Serre weight $\sigma$ with $\sigma \in W^{\operatorname{exp}}(\overline{r}^{\operatorname{ss}})$. Since the analogue of Lemma~\ref{lem-twist} also holds with $* =\operatorname{DDR}$, we can assume $\sigma =\sigma_{a,0}$. Then $\sigma  \in W^{\operatorname{exp}}(\overline{r}^{\operatorname{ss}})$ implies the existence of $J \subset \operatorname{Hom}_{\mathbb{F}_p}(k,\overline{\mathbb{F}}_p)$ so that \eqref{unramandsemisimple} holds. Write $(J_{\operatorname{max}}, x_{\operatorname{max}})$ for the maximal such subset from Proposition~\ref{proposition-maximal}. Since $e=1$, we have that $x_{\operatorname{max}, \tau} = 0$ for all $\tau$. Let $\psi$ be the unramified character so that $\chi^{-1} = \psi \omega_{J_{\operatorname{max}},x_{\operatorname{max}}}$; recall $\omega_{J_{\operatorname{max}},x_{\operatorname{max}}}$ is the character from Remark~\ref{omegawhy}.

\subsubsection*{The degenerate case}
First, we treat the case where $J_{\operatorname{max}} = \operatorname{Hom}_{\mathbb{F}_p}(k,\overline{\mathbb{F}}_p)$ and $a_\tau = p-1$ for each $\tau$. Then Lemma~\ref{lem-degen} implies $\Psi_{\sigma}(\chi_1,\chi_2)^{\operatorname{Gal}(L/K) = \chi^{-1}} = H^1(G_K,\overline{\mathbb{F}}_p(\chi))$. Note that $\mu$ is cardinality preserving, so that $\mu(J_{\operatorname{max}}) = \operatorname{Hom}_{\mathbb{F}_p}(k,\overline{\mathbb{F}}_p)$ in this case. Then Lemma~\ref{basis} implies $L^{\operatorname{DDR}}_{\sigma}(\chi_1,\chi_2) = H^1(G_K,\overline{\mathbb{F}}_p(\chi))$, so we are done in this case.

\subsubsection*{The non-degenerate cases}
For the rest of the proof we can assume that either $J_{\operatorname{max}} \neq \operatorname{Hom}_{\mathbb{F}_p}(k,\overline{\mathbb{F}}_p)$ or $a_\tau < p-1$ for some $\tau$. To simplify notations we set
$$
\Omega_\tau = \Omega_{\tau,\sigma,J_{\operatorname{max}},x_{\operatorname{max}}}
$$
and note that, due to the assumptions on $J_{\operatorname{max}}$ and $a_\tau$, we have $\Omega_\tau > -p(p^f-1)/(p-1)$.

Having excluded the degenerate case above, we note that it follows from the definitions that 
$$
\dim_{\FFpb}\left(L_\sigma^\mathrm{DDR}(\chi_1,\chi_2)\right) = \nu' + \mathrm{Card}(J_\mathrm{max}),
$$
where $\nu'=1$ if $\chi = 1$ and 0 otherwise. Corollary~\ref{dimPsiequals} therefore shows that $ \Psi_{\sigma}(\chi_1,\chi_2)^{\operatorname{Gal}(L/K) = \chi^{-1}}
$
and $L_\sigma^{\operatorname{DDR}}(\chi_1,\chi_2)$
have the same $\overline{\mathbb{F}}_p$-dimension. Therefore, it suffices to show that 
$$
\Psi_{\sigma}(\chi_1,\chi_2)^{\operatorname{Gal}(L/K) = \chi^{-1}} \subseteq L_\sigma^{\operatorname{DDR}}(\chi_1,\chi_2).
$$
For $\alpha \in L^\times$, let $\sigma_\alpha \in G_L$ be an element mapped onto $\alpha$ by $G_L \rightarrow G_L^{\operatorname{ab}} \rightarrow \widehat{L}^\times$. Then, since $\Psi_{\sigma}(\chi_1,\chi_2) = \Psi_{\operatorname{J}_{\operatorname{max},x_{\operatorname{max}}}}$, the value of an element in $\Psi_{\sigma}(\chi_1,\chi_2)$ at $\sigma_\alpha$ is computed by 
$$
c\left(\alpha, E^{\operatorname{AH}}((v^{\Omega_{\tau}})_\tau (z(v)^p-1)x)|_{v=\varpi}\right) 
$$
for some $x \in l[[u]] \otimes_{\mathbb{F}_p} \overline{\mathbb{F}}_p$. The definition of $L_\sigma^{\operatorname{DDR}}(\chi_1,\chi_2)$ therefore implies that the desired inclusion will follow from
\begin{enumerate}
	\item[(V1)] $c\left(E^{\operatorname{AH}}(v^{n_{\kappa'}} \lambda_{\kappa',\psi})|_{v = \varpi}, E^{\operatorname{AH}}((v^{\Omega_{\tau}})_\tau (z(v)^p-1)x)|_{v= \varpi}\right) = 0$ for $\kappa \not\in \mu(J_\mathrm{max})$;
	\item[(V2)] $c\left(E^{\operatorname{AH}}(b v^{p(p^f-1)/(p-1)})|_{v=\varpi}, E^{\operatorname{AH}}((v^{\Omega_{\tau}})_\tau (z(v)^p-1)x)|_{v=\varpi} \right) = 0$ for some $b \in l$ with $\operatorname{Tr}_{l/\mathbb{F}_p}b \neq 0$,
\end{enumerate}
for every $x \in l[[u]] \otimes_{\mathbb{F}_p} \overline{\mathbb{F}}_p$. In fact, in view of Proposition~\ref{maximalprop}, we can assume that for a fixed $\tau_0:k\rightarrow \overline{\mathbb{F}}_p$ we have either
\begin{itemize}
\item $x = \lambda_{\tau,\psi}$ whenever $\tau \circ \varphi^{-1} \in J_{\operatorname{max}}$, or
\item $x = \lambda_{\tau_0,\psi} v^{-\Omega_{\tau_0}}$ if $\psi=1$ and $-\Omega_{\tau_0} \in (p^f-1) \mathbb{Z}_{\geq 0}$.
\end{itemize}
Restricting to these $x$'s will simplify some of the computations.

\subsubsection*{Step 1: The vanishing in (V2) always occurs}
Since $\Omega_{\tau} > -p(p^f-1)/(p-1)$, we have
\begin{equation}\label{containing}
(v^{\Omega_{\tau}})_\tau (z(v)^p-1)x \in vl[[v]] \otimes_{\FFp} \overline{\mathbb{F}}_p.	
\end{equation}
Therefore, Corollary~\ref{cor-vos} computes the value in (V2) as the trace of the constant term of 
\begin{align*}
\sum_{m \geq 0} \left( (v^{\Omega_{\tau}})_\tau x \varphi^m \left( v\frac{d}{dv}(b v^{p(p^f-1)/(p-1)})\right) \right) 
- \sum_{m \geq 1} \left(\frac{1}{z(v)^p-1}\right)\left( b v^{p(p^f-1)/(p-1)} \varphi^m \left( v\frac{d}{dv}((v^{\Omega_{\tau}})_\tau (z(v)^p-1) x)\right) \right) 
\end{align*}
Using that $\Omega_{\tau} > -p(p^f-1)/(p-1)$, we see that this is an element of $vl[[v]] \otimes_{\FFp} \overline{\mathbb{F}}_p$. Therefore, the constant term vanishes.

\subsubsection*{Step 2: A formula for the value in (V1)}

Establishing the vanishing in (V1) will be more involved. We may assume for the remainder of the proof that $\kappa\notin \mu(J_\mathrm{max})$. We have already observed \eqref{containing} that $(v^{\Omega_{\tau}})_\tau (z(v)^p-1)x \in vl[[v]] \otimes_{\FFp} \overline{\mathbb{F}}_p$. We also have $v^{n_{\kappa'}} \lambda_{\kappa',\psi} \in vl[[v]] \otimes_{\FFp} \overline{\mathbb{F}}_p$. Therefore, Corollary~\ref{cor-vos} computes the value in (V1) as the trace of the constant term of
\begin{equation}\label{vanishingterms}
\begin{aligned}
\sum_{m \geq 0} \left( (\underbrace{v^{\Omega_{\tau}})_\tau x \varphi^m \left( v\frac{d}{dv}(v^{n_{\kappa}'} \lambda_{\kappa',\psi})\right)}_{(A_m)} \right) - \sum_{m \geq 1}\left( \underbrace{ \left(\frac{1}{z(v)^p-1}\right) v^{n_{\kappa}'} \lambda_{\kappa',\psi} \varphi^m \left( v\frac{d}{dv}((v^{\Omega_{\tau}})_\tau (z(v)^p-1) x)\right)}_{(B_m)} \right)
\end{aligned}
\end{equation}
for $\kappa \not\in \mu(J_{\operatorname{max}})$. To finish the proof of Proposition~\ref{DDR=us} we will establish the vanishing of the constant term of \eqref{vanishingterms} by considering the constant terms of each of the $(A_m)$ and $(B_m)$'s.

\subsubsection*{Step 3: Vanishing constant terms in $(A_m)$} First, supose $m\ge 0$ and assume $x = \lambda_{\tau,\psi}$ with $\tau \circ \varphi^{-1} \in J_{\operatorname{max}}$. Since $\varphi^m(\lambda_{\kappa',\psi}) = \lambda_{\kappa' \circ \varphi^{-m},\psi}$, we can write $(A_m)$ as
$$
n_{\kappa}' \lambda_{\tau,\psi} \lambda_{\kappa' \circ \varphi^{-m},\psi} v^{\Omega_\tau + p^m n_{\kappa}' }.  
$$
Therefore $(A_m)$ has a non-zero constant term only if
\begin{itemize}
\item $p^m n_{\kappa}' + \Omega_\tau = 0$
\item $\tau = \kappa' \circ \varphi^{-m}$
\end{itemize}
Writing $\tau ' = \tau \circ \varphi^{-1}$, we see that $\tau'\in J_\mathrm{max}$ and $\Omega_{\tau'} = p\Omega_\tau + (p^f-1)r_{\tau'}.$ Rewriting the two conditions above for $\tau'$, we obtain $p^{m+1}n_\kappa ' = -\Omega_{\tau'} +(p^f-1)r_{\tau'}$ and $\kappa' = \tau' \circ \varphi^{m+1}.$ Then it follows from \cite[3.6.7]{cegm17} that this implies $\kappa \in \mu(J_{\operatorname{max}})$, giving the desired contradiction. (To apply this proposition to our situation we note that in loc. cit. $\tau_0$ is fixed and $\tau_0 \circ \varphi^i$ is written $\tau_i$. Furthermore, $n_{\tau_0 \circ \varphi^i}'$ is written $n_{i}'$ and the value $\xi_i$ from loc. cit. is precisely $-\Omega_{\tau_0 \circ \varphi^i} + \delta_{J_\mathrm{max}}(\tau_0 \circ \varphi^i)r_{\tau_0 \circ \varphi^i}(p^f-1)$ with $\delta_{J_\mathrm{max}}$ the characteristic function for $J_\mathrm{max}$ on all embeddings.)

\begin{rem}\label{rem-wheremuisused}
This is where the precise definition of the shift function $\mu$ from the definition of $L_\sigma^{\operatorname{DDR}}(\chi_1,\chi_2)$ is used.
\end{rem}

The other case is when $\psi = 1$, $-\Omega_{\tau_0} \in (p^f-1)\mathbb{Z}_{\geq 0}$, and $x = \lambda_{\tau_0,\psi} v^{-\Omega_{\tau_0}}$. Then $(A_m)$ evaluates to $
n_{\kappa}' \lambda_{\tau_0,\psi} \lambda_{\kappa' \circ \varphi^{-m},\psi} v^{p^m n_{\kappa}'}$ which is clearly contained in $vl[[v]] \otimes_{\mathbb{F}_p} \overline{\mathbb{F}}_p$. Thus, the vanishing of the constant term here is clear.

\subsubsection*{Step 4: Vanishing residues of $(B_m)$}

Assume $m\ge 1$ and $x = \lambda_{\tau,\psi}$ with $\tau \circ \varphi^{-1} \in J_{\operatorname{max}}$. Since $z(v)^p-1$ is a $p$-th power in $l[[v]]$, we have $\frac{d}{dv}(z(v)^p-1) = 0$ in $l[[v]]$. Therefore, we can rewrite $(B_m)$ as
$$
\Omega_\tau  \lambda_{\kappa',\psi} \lambda_{\tau \circ \varphi^{-m},\psi} (z(v)^p-1)^{p^m-1} v^{n_{\kappa}' + p^m\Omega_\tau}.
$$
We may assume $\kappa' = \tau \circ \varphi^{-m}$ since $\lambda_{\kappa',\psi} \lambda_{\tau \circ \varphi^{-m},\psi}$ is zero otherwise. Since $z(v)^p-1$ has $v$-adic valuation $p(p^f-1)/(p-1)$, it suffices to show that
\begin{equation}\label{ineqwemustshow}
(p^m - 1)\left( \frac{p(p^f-1)}{p-1} \right) + n_\kappa ' + p^m\Omega_\tau > 0.
\end{equation}
Using that $n_{\kappa}' \geq (p^f-1)/(p-1)$ (see \cite[3.6.4]{cegm17}) and $\Omega_\tau > -p(p^f-1)/(p-1)$, we have 
\begin{equation*}
(p^m - 1)\left( \frac{p(p^f-1)}{p-1} \right) + n_{\kappa'} +p^m \Omega_\tau > \left(\frac{p^f-1}{p-1}\right) \left( p(p^m-1) +1 - p^{m+1}  \right) = -(p^f-1).
\end{equation*}
On the other hand, since $\kappa' = \tau \circ \varphi^{-m}$, it follows that
$$
\omega_{\kappa'}^{n_{\kappa}'} = \omega_{\tau}^{-\Omega_\tau} = \omega_{\kappa'}^{-p^m\Omega_\tau}
$$
and so $n_{\kappa}' + p^m\Omega_\tau \equiv 0$ modulo $p^f-1$. This, combined with the previous inequality, implies
\begin{equation}\label{ineq}
(p^m - 1)\left( \frac{p(p^f-1)}{p-1} \right) + n_{\kappa}' +p^m \Omega_\tau \geq 0.
\end{equation}
Thus, to prove \eqref{ineqwemustshow} we only have to show we cannot have an equality in \eqref{ineq} when $m\geq 1$. However, equality would imply $n_\kappa'$ is divisible by $p$ and this is not the case (for example, by \cite[3.6.1]{cegm17}).

The other possibility is when $\psi =1$, $-\Omega_{\tau_0} \in (p^f-1)\mathbb{Z}_{\geq 0}$, and $x = \lambda_{\tau_0,\psi} v^{-\Omega_{\tau_0}}$. However, in this case $(B_m)$ evaluates to zero so there is nothing to compute.

\printbibliography

\end{document}